\documentclass[11pt]{amsart}
\usepackage{amsfonts}
\usepackage{amsmath,amssymb,latexsym,soul,cite,mathrsfs}
\usepackage{mathtools}
\usepackage{color,enumitem,graphicx}
\usepackage[colorlinks=true,urlcolor=blue,
citecolor=red,linkcolor=blue,linktocpage,pdfpagelabels,
bookmarksnumbered,bookmarksopen]{hyperref}
\usepackage[english]{babel}

\usepackage[left=2.9cm,right=2.9cm,top=2.8cm,bottom=2.8cm]{geometry}
\usepackage[hyperpageref]{backref}

\usepackage[colorinlistoftodos]{todonotes}
\makeatletter
\providecommand\@dotsep{5}
\def\listtodoname{List of Todos}
\def\listoftodos{\@starttoc{tdo}\listtodoname}
\makeatother

\numberwithin{equation}{section}
%\pagestyle{myheadings}
% \markboth{}{} \pretolerance=10000
%\def\lb{\lambda}
%\def\var{\varepsilon}
%\def\pil{\left<}
%\def\pir{\right>}

%\def\nd{\noindent}
%\def\thend{\rule{3mm}{3mm}}
%\def\mathbb Re{\mathbb{R}}

%\newtheorem{lem}{Lemma}
%\newtheorem{prop}{Proposition}
%\newtheorem{theo}{Theorem}
%\newtheorem{coro}{Corollary}
%\newtheorem{rem}{Remark}

%\newcommand{\fim}{\hfill\rule{2mm}{2mm}}
%\newcommand{\n}{{\noindent}}
%\newcommand{p}{\displaystyle}
%\newcommand{\mathbb R}{\mbox{${\rm{I\!R}}$}}
%\newcommand{\mathbb RN}{I\!\!R^N}
%\newcommand{\N}{\mbox{${\rm{I\!N}}$}}
%\newcommand{\mathbb R}{\mathrm{I\!R\!}}
%\newcommand{\mathcal N}{\mathrm{I\!N\!}}
%\newcommand{\C}{\mbox{${\rm{C\hspace{-1.8mm}\rule{0.3mm}{2.8mm}}}$}}
%\def\n{\hspace*{0em}}
%\newcommand{}{\displaystyle}
%\def\dis{\displaystyle}
%\def\theequation{\thesection.\arabic{equation}}
%\let\Section=\section

%\def\section{\setcounter{equation}{0}\Section}

\makeatletter
\@namedef{subjclassname@2020}{%
	\textup{2020} Mathematics Subject Classification}
\makeatother

\newtheorem{theorem}{Theorem}[section]
\newtheorem{proposition}[theorem]{Proposition}
\newtheorem{lemma}[theorem]{Lemma}
\newtheorem{corollary}[theorem]{Corollary}

\begin{document}

	%	\title {nonsmooth version of fountain theorem}
	
	\title [Existence of solution for two classes of quasilinear systems...]{Existence of solution for two classes of quasilinear systems defined on a non-reflexive Orlicz-Sobolev Spaces}

	\author{Lucas da Silva}
	\author{Marco Souto}

	\address[Lucas da Silva]{\newline\indent Unidade Acad\^emica de Matem\'atica
		\newline\indent{Universidade Federal de Campina Grande,}
		\newline\indent
		58429-970, Campina Grande - PB - Brazil}
	\email{\href{mailto:ls3@academico.ufpb.br}{ls3@academico.ufpb.br}}
	
	\address[Marco. A. S. Souto]
	{\newline\indent Unidade Acad\^emica de Matem\'atica
		\newline\indent 
		Universidade Federal de Campina Grande,
		\newline\indent
		58429-970, Campina Grande - PB - Brazil}
	\email{\href{marco@dme.ufcg.edu.br}{marco@dme.ufcg.edu.br}}

	\pretolerance10000

	\begin{abstract}
\noindent This paper proves the existence of nontrivial solution for two classes of quasilinear systems of the type
\begin{equation*}
\left\{\;
\begin{aligned}
-\Delta_{\Phi_{1}} u&=F_u(x,u,v)+\lambda R_u(x,u,v)\;\text{ in } \Omega& \\
-\Delta_{\Phi_{2}} v&=-F_v(x,u,v)-\lambda R_v(x,u,v)\;\text{ in } \Omega& \\
u=v&=0\;\text{ on } \partial\Omega&
\end{aligned}
\right.
\end{equation*}
where $\lambda > 0$ is a parameter, $\Omega$ is a bounded domain in $\mathbb{R}^N$($N \geq 2$) with smooth boundary $\partial \Omega$. The first class we drop the $\Delta_2$-condition of the functions $\tilde{\Phi}_i$($i=1,2$) and assume that $F$ has a double criticality. For this class, we use a linking theorem without the Palais-Smale condition for locally Lipschitz functionals combined with a concentration–compactness lemma for nonreflexive Orlicz-Sobolev space. The second class, we relax the $\Delta_2$-condition of the functions ${\Phi}_i$($i=1,2$). For this class, we consider $F=0$ and $\lambda=1$ and obtain the proof based on a saddle-point theorem of Rabinowitz without the Palais-Smale condition for functionals Fréchet differentiable combined with some properties of the weak$^*$ topology.

	\end{abstract}

	\thanks{{Lucas da Silva was supported by FAPESQ/Brazil.}}
	\thanks{{Marco Souto was partially supported by CNPq/Brazil 309.692/2020-2}}
	\subjclass[2020]{{{Primary: 35A15, 35B33, 35J62; Secondary: 46E30}}} 
	\keywords{{Orlicz-Sobolev spaces; Variational methods; Linking theorem; Saddle-point theorem; Critical growth; $\Delta_{2}-$condition}}

	\maketitle
	
	%------------------------------------------------------------------------------
	\section{Introduction}

In the present paper, we consider the existence of nontrivial solution for a large class of quasilinear systems of the type
\begin{equation}\label{S}
\left\{\;
\begin{aligned}
-\Delta_{\Phi_{1}}  u&=F_u(x,u,v)+\lambda R_u(x,u,v)\;\text{ in } \Omega& \\
-\Delta_{\Phi_{2}} v&=-F_v(x,u,v)-\lambda R_v(x,u,v)\;\text{ in }  \Omega& \\
u=v&=0\;\text{ on } \partial\Omega&
\end{aligned}
\right.
\end{equation}
where $\lambda > 0$ is a parameter, $\Omega$ is a bounded domain in $\mathbb{R}^N$($N \geq 2$) with smooth boundary $\partial \Omega$, $F:\overline{\Omega}\times\mathbb{R}^{2}\rightarrow \mathbb{R}$ and $R:\overline{\Omega}\times\mathbb{R}^{2}\rightarrow \mathbb{R}$ are continuous function verifying some conditions which will be mentioned later. It is important to recall that 
\begin{align*}
\Delta_{\Phi_i}u=div(\phi_i(|\nabla u|)\nabla u)
\end{align*}
where $\Phi_i(i=1,2):\mathbb{R}\rightarrow\mathbb{R}$ is a $N$-function of the form
\begin{align}\label{00.8}
\Phi_i(t)=\int_{0}^{|t|}s\phi_i(s)ds,
\end{align} 
and $\phi_i:(0,\infty)\rightarrow(0,\infty)$ is a $C^1$ function verifying some technical assumptions. 

Set $\Phi_{1} =:\Phi$, $v= 0$, $F_v(x, t, 0) =0$ and $R_v(x, t, 0) = 0$, for all $t\in \mathbb{R}$. Then the system \eqref{S} reduces to the following quasilinear elliptic equation:
\begin{equation}\label{P}
\left\{\;
\begin{aligned}
-\Delta_{\Phi}  u&=f(x,u)+ \lambda r(x,u)\;\text{ in } \Omega& \\
u&=0\;\text{ on } \partial\Omega&
\end{aligned}
\right.
\end{equation}
where $\Omega$ is a domain in $\mathbb{R}^N$, $f(x,u)=F_u(x,u,0)$ and $r(x,u)=R_v(x,u,0)$. 
%
%
%Set $\Phi_{2} = \Phi_{1} =:\Phi$, $u= -v$, $F(x, u, v) = F(x, -v, -u)$ and $R(x, u, v) = R(x, -v, -u)$. Then the system \eqref{S} reduces to the following quasilinear elliptic equation:
%\begin{equation}\label{P}
%\left\{\;
%\begin{aligned}
%-\Delta_{\Phi}  u&=f(x,u)+ \lambda r(x,u)\;\text{ in } \Omega& \\
%u&=0\;\text{ on } \partial\Omega&
%\end{aligned}
%\right.
%\end{equation}
%where $\Omega$ is a domain in $\mathbb{R}^N$, $r(x, u) = R_u(x, u, -u)$ and $f(x, u) = F_u(x, u, -u)$.

The equations like \eqref{P} have been arousing great interest among scholars. We refer readers to [\citenum{alves1},\citenum{FN},\citenum{Bonanno1},\citen{Bonanno2},\citenum{Santos},\citenum{Marcelo},\citenum{Mihailescu2},\citenum{Mihailescu3},\citenum{Donald}] and reference therein for more information. In all of these works the so called $\Delta_{2}$-condition has been assumed on $\Phi$ and $\tilde{\Phi }$, which ensures that the Orlicz-Sobolev space $W^{1,\Phi}_0(\Omega)$ is a reflexive Banach space. This assertion is used several times in order to get a nontrivial solution for elliptic problems taking into account the weak topology and the classical variational methods to $C^1$ functionals. 

In recent years many researchers have been relaxing the $\Delta_{2}$-condition of the functions $\Phi$ and $\tilde{\Phi }$ to study the equation \eqref{P}. From a mathematical point of view the problem becomes more subtle, because in general the functional energy associated with these problems are in general only continuous and the classical variational methods to $C^1$ cannot be used. Also, the weak topology cannot be taken into account, since $W^{1,\Phi}_0(\Omega)$ may not be reflective. For example, in \cite{Garcia}, García-Huidobro et al. have considered existence of solution for the nonlinear eigenvalue problem like \eqref{P} where $r$ is a continuous function verifying some other technical conditions. In the first part of that paper the authors consider the function $\Phi=(e^{t^2}-1)/2$. 

More recently, Fukagai et al, in \cite{FN1}, studied the equation \eqref{P} assuming that the $N$-function $\tilde{\Phi }$ may not verify the $\Delta_2$-condition. In that paper, the authors assumed that $f$ has a critical Sobolev growth, $r$ is a subcritical term and $\lambda$ is a real parameter. Using variational arguments along with the second concentration–compactness lemma of P. L. Lions for nonreflexive Orlicz-Sobolev space, they showed that there exists a constant $\lambda_0 > 0$ such that the boundary-value problem \eqref{P} has a nonnegative nontrivial solution in $W^{1,\Phi}_0(\Omega)$ for any $\lambda>\lambda_0$.

Already in \cite{silvagon}, Silva, Gonçalves and Silva, considered existence of multiple solutions for a class of problem like \eqref{P}. In that paper the $\Delta_2$-condition is not also assumed and the main tool used was the truncation of the nonlinearity together with a minimization procedure for the functional energy associated to the quasilinear elliptic problem \eqref{P}. In another paper, Silva, Carvalho, Silva and Gonçalves, in \cite{Edcarlos}, study a class of problem \eqref{P} where the energy functional satisfies the mountain pass geometry and the $N$-function $\tilde{\Phi } $ does not satisfy the $\Delta_2$-condition and has a polynomial growth.

Set $\phi_{1}(t)=|t |^p-2 $, $\phi_{2}(t)=|t|^q-2$ ($p,q>1$). Then system \eqref{S} reduces to the following $(p, q)$-Laplacian system: 
\begin{equation}\label{S'}
\left\{\;
\begin{aligned}
-\Delta_{p}  u&=F_u(x,u,v)+\lambda R_u(x,u,v)\;\text{ in } \Omega& \\
-\Delta_{q} v&=-F_v(x,u,v)-\lambda R_v(x,u,v)\;\text{ in }  \Omega& \\
u=v&=0\;\text{ on } \partial\Omega&
\end{aligned}
\right.
\end{equation}
For the case where $p = q = 2$, this class of systems is called noncooperative and in recent decades many recent studies have focused on it. For example, in \cite{Figueiredo}, Ding and Figueiredo consider the noncooperative system \eqref{S'} with $\lambda=1$ allowing that the function $F(x, u, v)$ can assume a supercritical and subcritical growth on $v$ and $u$ respectively. They established the existence of infinitely many solutions to \eqref{S'} provided the nonlinear terms $F$ and $R$ are even in $(u, v)$. Already in \cite{Clapp}, Clapp, Ding and Hernández showed that multiple existence of solutions to the noncooperative system \eqref{S'} with some supercritical growth can be established without the symmetry assumption. Motivated by some results found in \cite{Clapp} and \cite{Figueiredo}, Alves and Monari in \cite{alves2} studied the existence of nontrivial solutions for \eqref{S'} when $p$ and $q$ are different from 2, $\lambda=1$ and $F(x, u, v)$ has a supercritical growth on variable $v$ and has a critical growth at infinity on variable $u$ of the type $|u|^{p^*}$ with $p^* =pN/(N - p)$, the critical exponent of the embedding $W^{1,p}_0(\Omega) \hookrightarrow L^{p^*}(\Omega)$. The main difficulty in this case case is the lack of compactness of the functional energy associated to system. To overcome this difficulty, they carefully estimate and prove through the concentration–compactness principle due to Lions \cite{Lions1} the existence of a Palais-Smale sequence that has a strongly convergent subsequence. 

In a brief bibliographical research, we can mention some contributions devoted to the study of system where $\Phi_1$ and $\Phi_2$ are less trivial functions, as can be seen in [\citenum{Huentutripay},\citenum{wang}]. We would like to highlight the paper \cite{wang}, Wang et al. considered the following quasilinear elliptic system in Orlicz-Sobolev spaces:
\begin{equation}\label{S2}
\left\{\;
\begin{aligned}
-\Delta_{\Phi_{1}}  u&=R_u(x,u,v)\;\text{ in } \Omega& \\
-\Delta_{\Phi_{2}} v&= R_v(x,u,v)\;\text{ in }  \Omega& \\
u=v&=0\;\text{ on } \partial\Omega&
\end{aligned}
\right.
\end{equation}
where $\Omega$ is a bounded domain in $R^N$($N \geq 2$) with smooth boundary $\partial\Omega$. In that paper when $F$ satisfies some appropriate conditions including $(\Phi_1,\Phi_{2})$-superlinear and subcritical growth conditions at infinity as well as symmetric condition, by using the mountain pass theorem and the symmetric mountain pass theorem, they obtained that system \eqref{S2} has a nontrivial weak solution and infinitely many weak solutions, respectively. Some of the results obtained extend and improve those corresponding results in Carvalho et al \cite{Marcelo}.

In \cite{Huentutripay}, Huentutripay-Manásevich studied an eigenvalue problem to the following system:
\begin{equation*}\label{S3}
\left\{\;
\begin{aligned}
-\Delta_{\Phi_{1}}  u&=\lambda R_u(x,u,v)\;\text{ in } \Omega& \\
-\Delta_{\Phi_{2}} v&=\lambda R_v(x,u,v)\;\text{ in }  \Omega& \\
u=v&=0\;\text{ on } \partial\Omega&
\end{aligned}
\right.
\end{equation*}
where the function $R$ has the form
$$R(x,u,v)=A_1(x, u) + b(x)\Gamma_1(u)\Gamma_2(v) + A_2(x, v).$$
They extended the results of \cite{Garcia} for this system, that is, for a certain $\lambda$, translated the existence of a solution into an adequate minimization problem and proved the existence of a solution under some reasonable restriction. It is obvious that in \cite{Huentutripay}, the Orlicz-Sobolev spaces need not be reflexive.

Inspired by the mentioned research works cited above and sharpened by the known difficulty of working with nonreflexive Banach spaces, we intend to consider two new classes of problem \eqref{S} where $W ^{ 1,\Phi}_0 (\Omega)$ can be nonreflexive. 

The section 3 of this paper is dedicated to the study of the \eqref{S} system where the functions $\tilde{\Phi }_1$ and $\tilde{\Phi }_2$ may not verify the $\Delta_{2}$-condition. Inspired by \cite{alves2}, we assume that $F:\overline{\Omega}\times\mathbb{R}^{2}\rightarrow \mathbb{R}$ has a supercritical growth on variable $v$
and has a critical growth at infinity on variable $u$ and $R:\overline{\Omega}\times\mathbb{R}^{2}\rightarrow \mathbb{R}$ is a continuous function with subcritical term verifying some conditions which will be mentioned in section 3. The first difficulty in studying this case arises from the lack of differentiability of the energy functional $J_\lambda:W ^{1,\Phi_1}_0 (\Omega)\times W ^{1,\Phi_2}_0 (\Omega)\rightarrow\mathbb{R}$ associated with the system \eqref{S} given by
$$
J_\lambda(u,v)=\int_{\Omega} \Phi_1(|\nabla u|)dx-\int_{\Omega}\Phi_2(|\nabla v|)dx-\int_{\Omega}F(x,u,v)dx-\lambda\int_{\Omega}R(x,u,v)dx.
$$
To get around this difficulty we will use the critical point theory for locally lipschitz fuctionals, here, in particular we apply a version of the linking theorem without Palais-Smale condition for locally lipschitz fuctionals (The version that will be applied in section 3 we took care to enunciate in section 2). A second difficulty of studying this case is the lack of compactness of the energy functional $J_\lambda$. To overcome this difficulty, we adapted some arguments presented in the works of Alves and Soares in \cite{alves2} and from Fukagai et al, in \cite{FN1}. Here, we carefully estimate and prove through the second concentration–compactness lemma of P. L. Lions for nonreflexive Orlicz-Sobolev space that there exists a constant $\lambda_0 > 0$ such that the boundary-value problem \eqref{P} has a nonnegative nontrivial solution in $W^{1,\Phi}_0(\Omega)$ for any $\lambda>\lambda_0$.

The section 4 of this paper is dedicated to the study of the \eqref{S} system where the functions $\Phi_1$ and $\Phi _2$ may not verify the $\Delta_{2}$-condition. Here, we consider $F=0$, $\lambda=1$ and $R$ a continuous function verifying some conditions which will be mentioned in section 4. As stated earlier, the first difficulty in relaxing the $\Delta_{2}$ -condition of the functions $\Phi_1$ and $\Phi _2$ arises from the fact that the energy functional $J:W ^{1,\Phi_1}_0 (\Omega)\times W ^{1,\Phi_2}_0 ( \Omega)\rightarrow\mathbb{R}$ associated with the system \eqref{S} given by
$$
J(u,v)=\int_{\Omega} \Phi_1(|\nabla u|)dx-\int_{\Omega}\Phi_2(|\nabla v|)dx-\int_{\Omega}R(x,u,v)dx.
$$
no belongs to $C^{1}(W ^{1,\Phi_1}_0 (\Omega)\times W ^{1,\Phi_2}_0 (\Omega),\mathbb{R})$. Have this in mind, we have decide to work in the space $W ^{1}_0 E^{\Phi_i}(\Omega)$, because it is topologically more rich than $W ^{1,\Phi_i}_0(\Omega)$, for example, it is possible to prove that the energy functional $J$ is $C^1(W ^{1}_0 E^{\Phi_1}(\Omega)\times W ^{1}_0 E^{\Phi_2}(\Omega), \mathbb{R})$. Even knowing that the result contained in Proposition 3.7 in \cite{alves} presents inconsistency when dropping the $\Delta_2$-condition, we add a ''Ambrosetti–Rabinowitz'' condition under the function $R$ and we refine part of the technique presented by Alves et al., so that, together with the saddle-point theorem of Rabinowitz without Palais-Smale condition, we can prove the existence of a Palais-Smale bounded sequence. Finally, due to the possible lack of reflexivity of spaces $W ^{1,\Phi_i}_0 (\Omega)$$(i=1,2)$, we will use some properties of the weak$^*$ topology of these spaces to guarantee the existence of nontrivial solutions for the system \eqref{S}.

%  para estudar a existencia de soluções do sistema \eqref{S}, nos permite, junto com the saddle-point theorem of Rabinowitz [...] without Palais-Smale condition, provar that there is a Palais-Smale bounded sequence.  

It is important to stress that, to the best of our knowledge, this is the first paper where the linking theorem for locally lipschitz fuctionals and the saddle-point theorem of Rabinowitz has been used to deal with a quasilinear elliptic system driven by an $N$-functions may not verify the $\Delta_{2}$-condition.

\section{Preliminaries}
We recall a few notations and results on the critical point theory for locally Lipschitz functionals defined on a real Banach space X with norm $\lVert\cdot\lVert_{X}$. This results can be found in \cite{Maria} and in references therein.

Let $X$ be a Banach space. Let $I : X \rightarrow \mathbb{R}$ be a locally Lipschitz functional
($I \in Lip_{loc}(X, \mathbb{R})$, that is, for each $x \in X$, there exist an open neighborhood
$N(x)$ of $x$ and a constant $k(x) > 0$, such that 
\begin{align*}
|I(y_1)-I(y_2)|\leq k(x)\lVert y_1 -y_2\lVert,
\end{align*}
for all $y_1$ and $y_2$ in $N(x)$. 

A generalized directional derivative of a locally Lipschitz functional $I : X \rightarrow \mathbb{R}$ at $x \in X$ in the direction $v \in X$, denoted by $I^0(x;v)$, is defined by
\begin{align*}
	I^0(x;v)=\limsup_{h\rightarrow 0\; \lambda\rightarrow 0^+}\frac{I(x+h+\lambda v)-I(x+h)}{\lambda}
\end{align*}
and the generalized gradient of $I$ at $x$ is the set
\begin{align*}
	\partial I(x)=\{\mu\in X^*: \langle \mu, v\rangle\leq I^0(x;v),\;v\in X\}.
\end{align*}

%Como se pode ver na Propriedade $(P_1)$ in [..., pag 176], a função $\lambda_I : X \rightarrow \mathbb{R}$, dada por
%$$\lambda_I(x) = \min\{\lVert \mu\lVert_{X^*}: \mu \in 	\partial I(x)\}.$$
%está bem definida.

Let $Q$ be a compact metric space and let $Q_*$ be a nonempty closed subset strictly contained in $Q$. We set
\begin{align}
\mathcal{P}=\{p\in C(Q, X)\colon p=p_*\text{ on } Q_*\},
\end{align}
where $p_*$ is a fixed continuous map on $Q_*$ and
\begin{align}
c=\inf_{c\in\mathcal{P}}\max_{x\in Q} I(p(x)).
\end{align}
So
\begin{align}
c\geq\max_{x\in Q_*} I(p_*(x)).
\end{align}

We say that the subset $A\subset X$ {\it links with the pair} $(Q,Q_*)$ if $p_*(Q_*)\cap A=\emptyset$ and for each $p\in \mathcal{P}$, $p(Q)\cap A\neq\emptyset$.

\begin{theorem}\label{1}
	Let $I\in Lip_{loc}(X, \mathbb{R})$ and $A\subset I_c=\{x\in X:I(x)\geq c\}$ be a closed subset which links with the pair $(Q,Q_*)$. Then there exists a sequence $(x_n) \subset X$ satisfying 
	\begin{align*}
	\lim_{n\rightarrow\infty }d(x_n,A)=0,\quad \lim_{n\rightarrow\infty }I(x_n)=c\quad\text{ e }\quad
	\lim_{n\rightarrow\infty }\lambda_I(x_n)=0,
	\end{align*}
	with
	$$\lambda_I(x_n) = \min\{\lVert \mu\lVert_{X^*}: \mu \in 	\partial I(x_n)\}.$$
\end{theorem}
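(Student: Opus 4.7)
The approach I would take is the Ghoussoub-Preiss minimax principle adapted to locally Lipschitz functionals via the Clarke subdifferential: apply Ekeland's variational principle on the path space itself and extract the claimed sequence from the resulting quasi-minimizing paths. Endow $\mathcal{P}$ with the uniform metric $d_\infty(p,q) = \max_{x \in Q}\|p(x) - q(x)\|_X$; then $\mathcal{P}$ is a closed subset of $C(Q,X)$ and hence complete, and the functional $F: \mathcal{P} \to \mathbb{R}$ given by $F(p) = \max_{x \in Q} I(p(x))$ is continuous (because $I \in \mathrm{Lip}_{loc}(X,\mathbb{R})$ and $Q$ is compact) and satisfies $\inf_{\mathcal{P}} F = c$. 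For each $n \in \mathbb{N}$ pick $p_n \in \mathcal{P}$ with $F(p_n) \leq c + 1/n^2$; Ekeland's principle then yields $\tilde p_n \in \mathcal{P}$ with $F(\tilde p_n) \leq c + 1/n^2$, $d_\infty(\tilde p_n, p_n) \leq 1/n$, and
\[
F(q) \geq F(\tilde p_n) - \tfrac{1}{n} \, d_\infty(q, \tilde p_n) \quad \text{for all } q \in \mathcal{P}.
\]

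The next step is to show that $\tilde p_n$ must contain a point $x_n = \tilde p_n(t_n)$ with $I(x_n) \in [c, c + 1/n^2]$, $d(x_n, A) \to 0$, and $\lambda_I(x_n) \to 0$. Let $M_n = \{t \in Q : I(\tilde p_n(t)) = F(\tilde p_n)\}$, a nonempty compact subset of $Q$. Since $A$ links $(Q,Q_*)$, $\tilde p_n(Q) \cap A \neq \emptyset$; together with $A \subset I_c$ and $\max_Q I \circ \tilde p_n \leq c + 1/n^2$, every $t \in \tilde p_n^{-1}(A)$ must lie in a shrinking neighborhood $M_n^{\varepsilon_n}$ of $M_n$. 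Argue by contradiction: if no such $x_n$ exists, then on a region $W_n \supset \tilde p_n(M_n^{\varepsilon_n}) \cap A_{\varepsilon_n}$ one has $\lambda_I \geq \alpha > 0$. Exploiting that $\partial I$ has weak-$*$ compact convex values and is upper semicontinuous while $\lambda_I$ is lower semicontinuous, build (by a Chang-type partition-of-unity selection) a locally Lipschitz pseudo-gradient $V$ on $W_n$ with $\|V\| \leq 1$ and $\langle \mu, V(y)\rangle \geq \alpha/2$ for every $\mu \in \partial I(y)$, $y \in W_n$. Choose a cutoff $\psi: Q \to [0,1]$ with $\psi = 0$ on $Q_*$ — possible because $d(p_*(Q_*), A) > 0$ by compactness of $Q_*$ and closedness of $A$ — and $\psi = 1$ on $M_n^{\varepsilon_n/2}$, and perturb $\tilde p_n$ by
\[
p_s(t) := \tilde p_n(t) - s\, \psi(t)\, V(\tilde p_n(t)), \qquad s > 0.
\]
Then $p_s \in \mathcal{P}$, and Lebourg's mean-value theorem for the Clarke subdifferential gives
\[
F(p_s) \leq F(\tilde p_n) - \tfrac{\alpha}{2} s + o(s), \qquad d_\infty(p_s, \tilde p_n) \leq s,
\]
which contradicts the Ekeland inequality as soon as $n$ is so large that $\alpha/2 > 1/n$.

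The main obstacle I expect is the localization step: translating the qualitative linking hypothesis into the quantitative statement that the near-maximum set of $I \circ \tilde p_n$ essentially coincides with $\tilde p_n^{-1}(A)$, which is what legitimizes supporting the perturbation $p_s$ inside a prescribed neighborhood of $A$ without disturbing the boundary values on $Q_*$. The strict positivity of $d(p_*(Q_*), A)$ is the crucial geometric input, and combining it with a uniform continuity argument on the family $\{\tilde p_n\}$ is delicate when $F(\tilde p_n)$ is only controlled from above by $c + 1/n^2$. The subsidiary construction of the Chang pseudo-gradient is standard (cf.\ \cite{Maria}) but, unlike in the $C^1$ case, no single "gradient vector" is available at each point, so the verification that $V$ can be made locally Lipschitz with the quantitative lower bound $\langle \mu, V\rangle \geq \alpha/2$ requires passing through selections of $\partial I$ of near-minimal norm glued together by a partition of unity.
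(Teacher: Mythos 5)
The paper itself does not prove Theorem~\ref{1}: it is stated as a recalled result, with the attribution ``this results can be found in \cite{Maria} and in references therein,'' and no proof environment follows it. So there is no internal proof to compare against; I can only assess your attempt against the standard literature argument (Ghoussoub--Preiss via Ekeland, as in \cite{Maria}).

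Your overall strategy --- Ekeland's variational principle on $(\mathcal{P}, d_\infty)$, followed by a Chang-type pseudo-gradient perturbation and a contradiction with the Ekeland inequality --- is the right one and matches the standard proof in structure. But there are two concrete gaps in your sketch. First, the assertion that ``every $t\in\tilde p_n^{-1}(A)$ must lie in a shrinking neighborhood $M_n^{\varepsilon_n}$ of $M_n$'' confuses being near-maximal in the \emph{value} of $I\circ\tilde p_n$ with being near $M_n$ in the \emph{metric of $Q$}; the former does not imply the latter. As written, your cutoff $\psi$ is supported on a $Q$-metric neighborhood of $M_n$, and on the set $\{\psi<1\}$ you have no quantitative gap between $I\circ\tilde p_n$ and $F(\tilde p_n)$, so the maximum after perturbation can fail to drop below $F(\tilde p_n)-\tfrac{\alpha}{2}s+o(s)$. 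The standard fix is to make $\psi$ a function of the value $I(\tilde p_n(t))$ (say $\psi(t)=\chi\big(I(\tilde p_n(t))\big)$ with $\chi=1$ near the max and $\chi=0$ below a threshold), so that on $\{\psi<1\}$ one already has $I\circ\tilde p_n \le F(\tilde p_n)-\eta$ for a fixed $\eta>0$. Second, and you correctly flag this as ``the main obstacle,'' nothing in your construction forces the eventual $x_n$ to satisfy $d(x_n,A)\to 0$; the Ghoussoub--Preiss proof achieves this by explicitly penalizing distance to $A$ (replacing $I$ by $I_\varepsilon = I + \varepsilon\,\mathrm{dist}(\cdot,A)$ or perturbing $F$ analogously) before applying Ekeland, so that the near-maximizers of the perturbed functional along quasi-optimal paths are automatically driven towards $A$. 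Without this penalization the $d(x_n,A)\to 0$ part of the conclusion is simply not obtained by your argument. Finally, note that requiring $\psi=0$ on $Q_*$ together with $\psi=1$ on the near-max set implicitly uses that $\sup_{Q_*}I\circ p_* < c$; the hypotheses only give $c \ge \max_{Q_*}I\circ p_*$, so either you must assume strict inequality (which is what one has in the applications in this paper) or handle the degenerate case separately.
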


\begin{proposition}\label{11}
	Let $I:X\rightarrow \mathbb{R}$ be a continuous and Gateaux-differentiable functional such that $I' :X \rightarrow X^*$ is continuous from the norm topology of $X$ to the weak$^*$-topology of $X^*$. Then $I\in Lip_{loc}(X, \mathbb{R})$ and $	\partial I(x)=\{I'(x)\},$ $\forall x\in X$.
\end{proposition}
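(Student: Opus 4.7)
The plan is to first show that $\|I'(\cdot)\|_{X^*}$ is locally bounded, then derive local Lipschitz continuity via the fundamental theorem of calculus along segments, then identify the generalized directional derivative with $\langle I'(x),\cdot\rangle$, and finally conclude that the generalized gradient is the singleton $\{I'(x)\}$. For local boundedness, fix $x\in X$ and suppose for contradiction that there is a sequence $y_n\to x$ in norm with $\|I'(y_n)\|_{X^*}\to\infty$. The norm-to-weak$^*$ continuity of $I'$ gives $\langle I'(y_n),z\rangle\to\langle I'(x),z\rangle$ for every $z\in X$, so $\{\langle I'(y_n),z\rangle\}_n$ is bounded for each $z$; the Banach--Steinhaus theorem applied in the Banach space $X^*$ then forces $\sup_n\|I'(y_n)\|_{X^*}<\infty$, a contradiction. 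Hence there exist $r>0$ and $M=M(x,r)>0$ with $\|I'(y)\|_{X^*}\leq M$ for all $y\in B(x,r)$.

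For $y_1,y_2\in B(x,r/2)$ the entire segment joining them lies in $B(x,r)$, and the scalar function $s\mapsto I(y_2+s(y_1-y_2))$ is $C^1$ on $[0,1]$ with derivative $\langle I'(y_2+s(y_1-y_2)),y_1-y_2\rangle$, which is continuous in $s$ thanks to norm-to-weak$^*$ continuity of $I'$. The fundamental theorem of calculus then yields
\[
I(y_1)-I(y_2)=\int_0^1 \langle I'(y_2+s(y_1-y_2)),\,y_1-y_2\rangle\, ds,
\]
so $|I(y_1)-I(y_2)|\leq M\|y_1-y_2\|$ and $I\in Lip_{loc}(X,\mathbb{R})$. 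The same identity applied to the difference quotient in the definition of $I^{0}(x;v)$ gives
\[
\frac{I(x+h+\lambda v)-I(x+h)}{\lambda}=\int_0^1 \langle I'(x+h+s\lambda v),\,v\rangle\, ds.
\]
As $(h,\lambda)\to(0,0^+)$ the integrand converges pointwise in $s$ to $\langle I'(x),v\rangle$ by the norm-to-weak$^*$ continuity of $I'$, and is dominated by $M\|v\|$ thanks to the local bound; dominated convergence therefore gives $I^{0}(x;v)=\langle I'(x),v\rangle$, so in particular $I'(x)\in\partial I(x)$.

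For uniqueness, any $\mu\in\partial I(x)$ satisfies $\langle\mu,v\rangle\leq I^{0}(x;v)=\langle I'(x),v\rangle$ for every $v\in X$; applying this inequality to both $v$ and $-v$ forces $\langle\mu,v\rangle=\langle I'(x),v\rangle$ for every $v$, i.e., $\mu=I'(x)$. Therefore $\partial I(x)=\{I'(x)\}$. The delicate point is the local-boundedness step: a continuous real-valued function on a bounded subset of an infinite-dimensional Banach space need not itself be bounded, so norm-local boundedness of $I'$ cannot be read off directly from norm-to-weak$^*$ continuity; the sequential reduction combined with the uniform boundedness principle in $X^*$ is what carries the argument through.
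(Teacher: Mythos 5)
Your proof is correct. The paper does not actually prove Proposition~\ref{11}; it simply cites it (to the Grossinho--Tersian book), so your argument supplies a proof that the paper omits, and it is exactly the standard one: local norm-boundedness of $I'$ via the uniform boundedness principle applied to a hypothetical unbounded sequence (the norm-to-weak$^*$ continuity guaranteeing pointwise boundedness on $X$), then the mean value / fundamental theorem of calculus representation along segments to get local Lipschitz continuity, then dominated convergence in the integral representation of the difference quotient to compute $I^0(x;v)=\langle I'(x),v\rangle$, and finally the $\pm v$ trick to show $\partial I(x)$ is the singleton $\{I'(x)\}$. Your remark at the end correctly identifies the only genuinely delicate point, namely that norm-to-weak$^*$ continuity does not by itself give norm-local boundedness of $I'$ and one really needs Banach--Steinhaus to close that gap.

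One small wording issue: when you say ``the Banach--Steinhaus theorem applied in the Banach space $X^*$'', what you mean (and what the argument requires) is that the principle is applied to the family $\{I'(y_n)\}_n$ regarded as bounded linear functionals on the Banach space $X$; completeness of $X$, not of $X^*$, is what is used. This is a phrasing slip rather than a mathematical error.
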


The Theorem \ref{1} together with Proposition \ref{11} allows us to propose a linking theorem for Gateaux-differentiable functionals. This result will be fundamental to study the class of system proposed in section 3.
\begin{theorem}(The linking theorem)\label{link}
Let $X$ be a real Banach space with $X=Y\oplus Z$, where $Y$ is finite
dimensional. 
Suppose that $I:X\longrightarrow \mathbb{R}$ is continuous and Gateaux-differentiable with derivative $I':X\longrightarrow X^{*}$ continuous from the norm topology of $E$ to the weak$^* $-topology of $X^*$ satisfying:

		\noindent{$(I_1)$} There is $\sigma> 0$ such that if $\mathcal{N}=\{u\in Z: \lVert u\lVert\leq\sigma\}$, then $\displaystyle b\;\dot{=}\inf_{\partial \mathcal{N}}I>0$.
		
	\noindent{$(I_2)$} There are $z_*\in Z\cap \partial B_1$ and $\rho >\sigma$ such that $$0=\sup_{\partial \mathcal{M}}I< d\;\dot{=}\sup_{ \mathcal{M}}I, $$

	where $$\mathcal{M}=\{u=\lambda z_*+y:\lVert u\lVert\leq \rho,\; \lambda\geq 0,\;y\in Y\}$$
%	
%	 e $$\mathcal{N}_0:=\partial \mathcal{M}=\{u=\lambda z_*+y:\lVert u\lVert= \rho\text{ e } \lambda\geq 0\text{ ou } \lVert u\lVert\leq\rho\text{ e } \lambda= 0\}.$$
	If
	\begin{align*}
	c=\inf_{\gamma\in \Gamma}\max_{x\in \mathcal{N}}I(\gamma(t)), 
	\end{align*}
	with \begin{align*}
	\Gamma=\{\gamma\in C(\mathcal{N},X): \gamma|_{\partial \mathcal{N}}=Id_{\partial\mathcal{N}} \}.
	\end{align*}
Then, $b\leq c$ and there is a sequence $(u_n) \subset X$ such that
	\begin{align*}
	\quad I(u_n)\rightarrow c\quad\text{ and }\quad
 I'(u_n)\rightarrow 0.
	\end{align*}
\end{theorem}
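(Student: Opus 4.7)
The plan is to reduce Theorem \ref{link} to the abstract critical point result Theorem \ref{1} for locally Lipschitz functionals. By Proposition \ref{11}, the Gâteaux-differentiability of $I$ together with the norm-to-weak$^*$ continuity of $I'$ yields $I \in Lip_{loc}(X, \mathbb{R})$ and $\partial I(x) = \{I'(x)\}$, so $\lambda_I(x) = \|I'(x)\|_{X^*}$ and any sequence delivered by Theorem \ref{1} is automatically a classical Palais-Smale sequence. It thus suffices to exhibit a closed set $A$, on which $I$ is bounded below by $b$, that links with the pair $(Q, Q_*) = (\mathcal{M}, \partial \mathcal{M})$ in the sense of Section 2 (with $p_* = \mathrm{Id}|_{\partial \mathcal{M}}$). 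The natural choice is
$$
A := \partial \mathcal{N} = \{u \in Z : \|u\| = \sigma\},
$$
which is closed and satisfies $\inf_A I \geq b > 0$ by $(I_1)$. The disjointness $p_*(\partial \mathcal{M}) \cap A = \emptyset$ is immediate: points of $\partial \mathcal{M}$ either lie in $Y$ (meeting $Z$ only at $0 \notin A$) or have norm $\rho > \sigma$.

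The delicate point is the intersection property: for every $p \in C(\mathcal{M}, X)$ with $p|_{\partial \mathcal{M}} = \mathrm{Id}$, one must produce $u^* \in \mathcal{M}$ with $p(u^*) \in A$. This is a Brouwer degree argument carried out in the finite-dimensional subspace $Y \oplus \mathbb{R} z_*$, in which $\mathcal{M}$ is a compact convex body homeomorphic to a closed half-ball in $\mathbb{R}^{\dim Y + 1}$. Let $P : X \to Y$ be the continuous projection associated to the splitting $X = Y \oplus Z$ and define
$$
H : \mathcal{M} \to Y \oplus \mathbb{R} z_*, \qquad H(u) := P(p(u)) + \|(\mathrm{Id} - P)(p(u))\|\, z_*.
$$
Since $p|_{\partial \mathcal{M}} = \mathrm{Id}$ and $\|(\mathrm{Id} - P)(y + \lambda z_*)\| = \lambda$ for $\lambda \geq 0$, $y \in Y$, the map $H$ agrees with the identity on $\partial \mathcal{M}$. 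A standard Brouwer degree / boundary-preserving retraction argument then guarantees $H(\mathcal{M}) \supset \mathcal{M}$, so the interior point $\sigma z_*$ has a preimage $u^* \in \mathcal{M}$ satisfying $P(p(u^*)) = 0$ and $\|(\mathrm{Id} - P)(p(u^*))\| = \sigma$; equivalently, $p(u^*) \in Z$ with $\|p(u^*)\| = \sigma$, i.e., $p(u^*) \in A$.

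With the linking established, the inequality $c \geq b$ is automatic — every admissible $\gamma$ intersects $A$, forcing $\max I \circ \gamma \geq b$ — and the hypotheses of Theorem \ref{1} hold at the minimax level $c$. That theorem delivers a sequence $(u_n) \subset X$ with $I(u_n) \to c$ and $\lambda_I(u_n) \to 0$; by Proposition \ref{11} this is precisely $\|I'(u_n)\|_{X^*} \to 0$, as claimed. The core obstacle is the Brouwer degree step establishing the intersection property; the remainder of the argument is bookkeeping that ties the definitions of Section 2 to the linking geometry encoded by $(I_1)$ and $(I_2)$.
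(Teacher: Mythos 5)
Your proposal makes the same high-level reduction as the paper — invoke Proposition~\ref{11} to identify $\partial I(x)=\{I'(x)\}$, then apply Theorem~\ref{1} — and implicitly corrects a typo in the paper's statement and proof: both write $Q=\mathcal{N}$, $Q_*=\partial\mathcal{N}$ and define $\Gamma$ as maps on $\mathcal{N}$, but $\mathcal{N}$ is a ball in the infinite-dimensional space $Z$ and therefore not a compact metric space as Section~2 requires, and the theorem is later applied with paths on $\mathcal{M}^n_{u_0}$ (see~\eqref{2.51}). Your choice $(Q,Q_*)=(\mathcal{M},\partial\mathcal{M})$ is the one that actually makes the framework usable, and your Brouwer-degree computation for the map $H(u)=P(p(u))+\|(\mathrm{Id}-P)(p(u))\|\,z_*$ is the substantive step that the paper's one-line proof omits (the paper's proof is simply: apply Theorem~\ref{1} with $A=\{x\in X:I(x)\geq c\}$ and cite~\cite{Maria}).

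There is, however, a genuine gap in the way you feed this into Theorem~\ref{1}. That theorem requires the linking set to satisfy $A\subset I_c=\{x\in X:I(x)\geq c\}$. You take $A=\partial\mathcal{N}$ and verify only $\inf_A I=b$; but you also establish $b\leq c$, and in general $b<c$ strictly, so $\partial\mathcal{N}\not\subset I_c$. Thus $A=\partial\mathcal{N}$ does not meet the stated hypothesis of Theorem~\ref{1}. The paper instead takes $A=I_c$, for which $A\subset I_c$ is automatic; the linking conditions then read $p(\mathcal{M})\cap I_c\neq\emptyset$ (true because $\mathcal{M}$ is compact, so $\max_{\mathcal{M}}I(p(\cdot))\geq c$ is attained) and $\partial\mathcal{M}\cap I_c=\emptyset$ (true because $\sup_{\partial\mathcal{M}}I=0$ and $c\geq b>0$). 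Note that the last inequality $c\geq b$ is exactly what your degree argument yields: every $p\in\Gamma$ has $p(\mathcal{M})\cap\partial\mathcal{N}\neq\emptyset$, so $\max_{\mathcal{M}}I(p(\cdot))\geq b$. So the fix is small: keep your intersection lemma, use it to conclude $c\geq b>0=\sup_{\partial\mathcal{M}}I$, and then apply Theorem~\ref{1} with $A=I_c$ rather than with $A=\partial\mathcal{N}$. Under the more classical formulation of the abstract theorem, where one only requires $\inf_A I>\max_{Q_*}I(p_*)$ and concludes a Palais--Smale sequence at level $c\geq\inf_A I$, your choice of $A$ would be perfectly adequate; the mismatch is specifically with Theorem~\ref{1} as stated in this paper.
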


\begin{proof}The result follows from Proposition \ref{11} and Theorem \ref{1} with $\mathcal{P}=\Gamma$, $Q = \mathcal{N}$, $Q_* = \partial \mathcal{N}$, $p_* = Id_{Q_*}$ e $A =\{x\in Z+ Y:I(x)\geq c\}$. 

\end{proof}

For the last section of this paper we will use the already known saddle-point theorem of Rabinowitz without Palais-Smale condition. The proof of this result also follows from Theorem \ref{1} along with Proposition \ref{11}.

\begin{theorem}(Saddle-point theorem)\label{saddle}
Let $X$ be a real Banach space with $X=Y\oplus Z$, where $Y$ is finite
dimensional. Suppose that $I:X\rightarrow \mathbb{R}$ is continuous and Gateaux-differentiable with derivative $I':X\rightarrow X^{*}$ continuous from the norm topology of $E$ to the weak$^* $-topology of $X^*$ satisfying:

\noindent{$(I_1)$} there are constants $\rho > 0$ and $\alpha_1 \in \mathbb{R}$ such that if $\mathcal{M}=\{u\in Y: \lVert u \lVert\leq \rho\}$, then $I|_{\partial \mathcal{M}}\leq \alpha_1$.

\noindent{$(I_2)$} there is a constant $\alpha_2>\alpha_1$ such that $I|_{Z}\geq \alpha_2$.
If
	\begin{align*}
	c=\inf_{\gamma\in \Gamma}\max_{x\in \mathcal{M}}I(\gamma(t)), 
	\end{align*}
with  \begin{align*}
	\Gamma=\{\gamma\in C(\mathcal{M},X): \gamma|_{\partial \mathcal{M}}=Id|_{\partial\mathcal{M}} \}.
	\end{align*}
Then, there is $(u_n) \subset X$ such that
\begin{align*}
\quad I(u_n)\rightarrow c\quad\text{ and }\quad
I'(u_n)\rightarrow 0.
\end{align*}
\end{theorem}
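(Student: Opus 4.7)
The plan is to apply Theorem \ref{1} after invoking Proposition \ref{11} to place $I$ within the locally Lipschitz framework. Concretely, Proposition \ref{11} gives $I \in Lip_{loc}(X,\mathbb{R})$ with $\partial I(x) = \{I'(x)\}$ for every $x \in X$; in particular the conclusion $\lambda_I(u_n) \to 0$ of Theorem \ref{1} is equivalent to $\|I'(u_n)\|_{X^*} \to 0$. I take $Q = \mathcal{M}$, $Q_* = \partial \mathcal{M}$ (a compact metric space and a nonempty closed subset strictly contained in $Q$, since $Y$ is finite dimensional and $\rho>0$), and $p_* = Id|_{\partial\mathcal{M}}$; the resulting class $\mathcal{P}$ equals $\Gamma$ and the associated minimax value coincides with the one in the statement.

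The key step is the linking inequality $c \geq \alpha_2$. Given any $\gamma \in \Gamma$, I consider the continuous map $g: \mathcal{M} \to Y$ defined by $g(x) = P_Y(\gamma(x))$, where $P_Y: X = Y \oplus Z \to Y$ is the linear projection. Since $\gamma|_{\partial \mathcal{M}} = Id|_{\partial \mathcal{M}}$ and $\partial \mathcal{M} \subset Y$, the restriction $g|_{\partial \mathcal{M}}$ is the identity on the sphere $\partial \mathcal{M}$. Because $Y$ is finite dimensional, a Brouwer-degree argument (equivalently, the no-retraction theorem for the closed ball $\mathcal{M} \subset Y$) forces $0 \in g(\mathcal{M})$; hence there exists $x_\gamma \in \mathcal{M}$ with $\gamma(x_\gamma) \in Z$. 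Hypothesis $(I_2)$ then yields $I(\gamma(x_\gamma)) \geq \alpha_2$, so $\max_{x \in \mathcal{M}} I(\gamma(x)) \geq \alpha_2$, and passing to the infimum over $\gamma \in \Gamma$ gives $c \geq \alpha_2 > \alpha_1 \geq \sup_{\partial \mathcal{M}} I$.

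To close the proof, I define $A = \{x \in X : I(x) \geq c\}$. By continuity of $I$, $A$ is closed, and $A \subset I_c$ by construction. I verify that $A$ links with $(\mathcal{M}, \partial \mathcal{M})$ as follows: first, $p_*(\partial \mathcal{M}) = \partial \mathcal{M}$ does not meet $A$ since $I|_{\partial \mathcal{M}} \leq \alpha_1 < c$; second, for every $\gamma \in \Gamma$ the compactness of $\mathcal{M}$ together with $c = \inf_\gamma \max_{\mathcal{M}} I(\gamma(\cdot))$ yields some $x \in \mathcal{M}$ at which $I(\gamma(x)) \geq c$, so $\gamma(x) \in A$. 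Theorem \ref{1} then produces a sequence $(u_n) \subset X$ with $I(u_n) \to c$, $d(u_n, A) \to 0$, and $\lambda_I(u_n) \to 0$, the last being $\|I'(u_n)\|_{X^*} \to 0$ by Proposition \ref{11}, as required.

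The main obstacle is the topological linking between $Z$ and $(\mathcal{M}, \partial \mathcal{M})$: finite dimensionality of $Y$ is essential here, as it allows Brouwer-degree theory (or the no-retraction theorem) to force every continuous $\gamma$ that agrees with the identity on $\partial \mathcal{M}$ to meet the complementary subspace $Z$. All remaining steps are elementary consequences of the continuity of $I$ and the minimax structure of $c$.
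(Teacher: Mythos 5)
Your proof is correct and follows the same route the paper indicates: the paper merely remarks that the result follows from Theorem~\ref{1} together with Proposition~\ref{11}, and your choices $Q=\mathcal{M}$, $Q_*=\partial\mathcal{M}$, $p_*=\mathrm{Id}|_{\partial\mathcal{M}}$, $A=\{x\in X:I(x)\geq c\}$ mirror exactly those made in the paper's proof of the companion Linking Theorem~\ref{link}. The Brouwer no-retraction argument you supply to show $c\geq\alpha_2>\alpha_1\geq\sup_{\partial\mathcal{M}}I$ is precisely the step the paper leaves implicit, and it is what guarantees $p_*(\partial\mathcal{M})\cap A=\emptyset$ so that $A$ links with $(\mathcal{M},\partial\mathcal{M})$.
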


\section{The functions $\tilde{\Phi }_1$ and $\tilde{\Phi }_2$ may not verify the $\Delta_{2}$-condition}

In this section, we study the existence of solutions for the following class of quasilinear systems in Orlicz-Sobolev spaces:
	\begin{equation*}
\left\{\;
\begin{aligned}
-div(\phi_1(|\nabla u|)\nabla u)&=F_u(x,u,v)+\lambda R_u(x,u,v)\;\text{ in } \Omega& \\
-div(\phi_2(|\nabla v|)\nabla v)&=-F_u(x,u,v)-\lambda H_v(x,u,v)\;\text{ in }  \Omega& \\
u=v&=0\;\text{ on } \partial\Omega&
\end{aligned}
\right.
\leqno{(S_1)}
\end{equation*}
where $\lambda > 0$ is a parameter, $\Omega$ is a bounded domain in $\mathbb{R}^N$($N \geq 2$) with smooth boundary $\partial \Omega$, and 
%where the $N$-functions $\Phi$ and $\Psi$ are of the form 
%\begin{align}
%\Phi(t)=\int_{0}^{|t|}s\phi(s)ds\;\text{ and }\;\Psi(t)=\int_{0}^{|t|}s\psi(s)ds,
%\end{align}
 $\phi_i (i=1,2):(0,\infty)\rightarrow(0,\infty)$  are two functions which satisfy:
 
 \noindent{$(\phi_1)$} $\phi_i\in C^1(0,+\infty)$ and  $ t\mapsto t\phi_i(t)$ are stricly increasing.

\noindent{$(\phi_2)$} $t\phi_i(t)\rightarrow 0$ as $t\rightarrow 0$ and $t\phi_i(t)\rightarrow +\infty$ as $t\rightarrow +\infty$

\noindent{$(\phi_3)$} 
$\displaystyle
1\leq\ell_i= \inf_{t>0}{\dfrac{t^{2}\phi_i(t)}{\Phi_i(t)}}\leq \sup_{t>0}{\dfrac{t^{2}\phi_i(t)}{\Phi_i(t)}}=m_i<N,$ where $\Phi_i(t)=\int_{0}^{|t|}s\phi_i(s)ds$ and $\ell_i < m_i < \ell_i ^{*}$.
%\begin{equation*}
%t\longmapsto t\psi(t)\;\text{ is increasing for }\;t>0.
%\leqno{(\psi_1)}
%\end{equation*}
%\begin{equation*}
%\displaystyle\lim_{t\rightarrow0^{+}}t\psi(t)=0 \;\;\text{ and }\;\; \displaystyle\lim_{t\rightarrow+\infty}t\psi(t)=+\infty.
%\leqno{(\psi_2)}
%\end{equation*}
%\begin{equation*}
%1\leq m_1= \inf_{t>0}{\dfrac{\psi(t)t^{2}}{\Psi(t)}}\leq \sup_{t>0}{\dfrac{\psi(t)t^{2}}{\Psi(t)}}=m_2<N,\;\;\;\;\text{ and }\;\;\;\;m_1 < m_2 <m_1 ^{*}.
%\leqno{(\phi_3)}
%\end{equation*}

Before continuing this section, consider $\alpha_1,\alpha_2\in(0,\frac{N}{N-1}-1)$ such that $\alpha_1\leq \alpha_2$. We would like to point out that $\Phi_{1}(t)=|t |\ln(|t|^{\alpha_1} +1)$ and $\Phi_{2}(t) =|t |\ln(|t|^{\alpha_2} +1)$ satisfying $(\phi_1) - (\phi_3)$ with $\ell_1=\ell_2=1$ and $m_1=1+\alpha_1$, $m_2=1+\alpha_2$ respectively. These functions are examples of $N$-functions whose the complementary functions $\tilde{\Phi}_{1}$ and $\tilde{\Phi}_{2}$ do not satisfy the
$\Delta_{2}$-condition, consequently $W^{1 ,\Phi_{\alpha_1}}_0(\Omega)\times W ^{1,\Phi_{\alpha_2}}_0(\Omega)$ is nonreflexive.

 In this section, we would like to recall that $(u,v)\in W^{1,\Phi_1}_0(\Omega)\times W^{1,\Phi_2}_0(\Omega)$ is a weak solution of $(S_1)$ whenever
\begin{align*}
\int_{\Omega}\phi_1(|\nabla u|)\nabla u\nabla w_1dx-\int_{\Omega}\phi_2(|\nabla v|)\nabla v\nabla w_2dx=\int_{\Omega}H_u(x,u,v)w_1dx+\int_{\Omega}H_v(x,u,v)w_2dx,
\end{align*}
for all $(w_1,w_2)\in W^{1,\Phi_1}_0(\Omega)\times W^{1,\Phi_2}_0(\Omega)$. Here, let us consider the $H$ function as follows:
$$H(x,u,v)=F(x,u,v)+ \lambda R(x,u,v)$$
where $F(x,u,v)={\Phi_{1*}(u)}+G(v)$, $\lambda > 0$ is a real parameter and $\Phi_{1*}$ denotes the Sobolev conjugate function of $\Phi_{1}$ defined by
\begin{align*}
\Phi_{1*}^{-1}(t)=\int_{0}^{t}\dfrac{\Phi_1 ^{-1}(s)}{s^{(N+1)/N}}ds\;\;\text{for}\;\;t>0\;\;\text{ when }\;\;\int_{1}^{+\infty}\dfrac{\Phi_1 ^{-1}(s)}{s^{(N+1)/N}}ds=+\infty.
\end{align*}
Furthermore, the functions $G$ and $R$ satisfy the following conditions:
 
\noindent{$(G_1)$} There are $C>0$, $G\in C^1(\mathbb{R},\mathbb{R})$, $a_1,a_2\in(1,\infty)$ and a $N$-function $ A(t)=\int_{0}^{|t|}sa(s)ds$ satisfying
\begin{equation*}
m_2<a_1\leq \dfrac{a(t)t^2}{A(t)}\leq a_2 , \;\;\forall t>0
\leqno{(i)}
\end{equation*}
and
\begin{equation*}
|g(s)|\leq  a_1Ca(|s|)|s|, \;\;\;\text{ for all }s\in\mathbb{R}
\leqno{(ii)}
\end{equation*}
where $g(s)=G'(s)$. If $a_2\geq \ell_2 ^*$, we add that
\begin{equation*}
(g(t)-g(s))(t-s)\geq C a(|t-s|)|t-s|^2, \;\;\;\text{ for all }t,s\in\mathbb{R}.
\leqno{(iii)}
\end{equation*}

\noindent{$(G_2)$} There exists $\nu \in(0, {\ell_1})$ such that 
\begin{equation*}
0\leq \nu G(s)\leq sg(s), \;\;\;\text{ for all }s\in\mathbb{R}.
\end{equation*}

\noindent{$(R_1)$} $R\in C^{1}(\overline{\Omega}\times\mathbb{R}^{2})$, $R_u(x,0,0)=0$, $R_v(x,0,0)=0$, $R(x,u,v)\geq0$ and $\linebreak R_u(x,u,v)u\geq0$, for all $(x,u,v)\in \overline{\Omega}\times\mathbb{R}^{2}$.

\noindent{$(R_2)$} There are $N$-functions $B(t)=\int_{0}^{|t|}sb(s)ds$, $P(t)=\int_{0}^{|t|}sp(s )ds$, $Q(t)=\int_{0}^{|t|}sq(s)ds$ and $Z(t)=\int_{0}^{|t|}sz(s)ds $ satisfying
\begin{equation*}
m_1< p_1\leq\dfrac{p(t)t^{2}}{P(t)}\leq p_2<\ell_1^{*}
\leqno{(i)}
\end{equation*}
\begin{equation*}
m_1< b_1\leq\dfrac{b(t)t^{2}}{B(t)}\leq b_2<\ell_1^{*}
\leqno{(ii)}
\end{equation*}
\begin{equation*}
m_2< q_1\leq\dfrac{q(t)t^{2}}{Q(t)}\leq q_2<\ell_2^{*}
\leqno{(iii)}
\end{equation*}
\begin{equation*}
m_2< z_1\leq\dfrac{z(t)t^{2}}{Z(t)}\leq z_2<\ell_2^{*},
\leqno{(iv)}
\end{equation*}

\noindent with $\max\{b_2,q_2\}<\min\{\ell_1^*,\ell_2^*\}$ so that
\begin{equation}\label{R.2}
|R_u(x,u,v)|\leq C(p(|u|)u+q(|v|)v) \;\;\;\;\text{ and }\;\;\;\; |R_v(x,u,v)|\leq C(b(|u|)u+z(|v|)v),
\end{equation}
for all $(x,u,v)\in \Omega\times\mathbb{R}^{2}$ and for some constant $C>0$.

\noindent{$(R_3)$} There exists $\mu \in(m_1, {\ell_1^{*}})$ such that
\begin{equation*}
\dfrac{1}{\mu}R_u(x,u,v)+\dfrac{1}{\nu}R_v(x,u,v)-R(x,u,v)\geq 0,\;\text{ for all } x\in \Omega \text{ and } (u,v)\in \mathbb{R}^2,
\end{equation*}
 where $\nu$ is given by condition $(G_2)$.

\noindent{$(R_4)$} There exists $s\in (m_1,\max\{p_2,b_2\}]$, a nonempty open subset $\Omega_0\subset\Omega$ and a constant $\omega>0$ such that
\begin{equation*}
R(x,u,v)\geq \omega|u|^s\;\;\text{ for all } x\in \Omega_0 \text{ and } (u,v)\in \mathbb{R}^2.
\end{equation*}

The main result of this section is the following.

\begin{theorem}\label{teo1}
	If $(\phi_1)-(\phi_3)$, $(G_1)-(G_2)$, $(R_1)-(R_4)$ hold, then there exists $ \lambda_0 >0$ such that $(S_1)$ possesses a nontrivial solution for all $\lambda>\lambda_0$.
\end{theorem}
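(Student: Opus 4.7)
The plan is to apply the linking theorem (Theorem \ref{link}) in its locally Lipschitz version to the energy functional
$$
J_\lambda(u,v) = \int_\Omega \Phi_1(|\nabla u|)\,dx - \int_\Omega \Phi_2(|\nabla v|)\,dx - \int_\Omega \Phi_{1*}(u)\,dx - \int_\Omega G(v)\,dx - \lambda\int_\Omega R(x,u,v)\,dx
$$
on $X = W^{1,\Phi_1}_0(\Omega)\times W^{1,\Phi_2}_0(\Omega)$. Because $\tilde\Phi_1$ may fail the $\Delta_2$-condition, the critical piece $u\mapsto\int_\Omega\Phi_{1*}(u)\,dx$ is only locally Lipschitz on $W^{1,\Phi_1}_0(\Omega)$, so Clarke's generalized gradient must replace the Fr\'echet derivative; the remaining pieces are Gateaux differentiable with norm-to-weak$^*$ continuous derivative, and the abstract framework of Proposition \ref{11} and Theorem \ref{1} applies to $J_\lambda$.

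First I would verify the linking geometry. I choose a splitting $X=Y\oplus Z$ with $Y$ a finite-dimensional subspace of $W^{1,\Phi_1}_0(\Omega)\times\{0\}$ built from bump functions supported in the open set $\Omega_0$ furnished by $(R_4)$, and pick $z_*=(u_*,0)\in Z\cap \partial B_1$ approximating an extremizer of the Orlicz--Sobolev embedding $W^{1,\Phi_1}_0\hookrightarrow L^{\Phi_{1*}}$. Using the growth estimates $(R_2)$, $(G_1)(ii)$, and the subcritical compact embeddings $W^{1,\Phi_i}_0(\Omega)\hookrightarrow L^{P},L^{B},L^{Q},L^{Z}$ together with $(\phi_3)$, I obtain $\inf_{\partial\mathcal{N}}J_\lambda =: b>0$ on a small sphere of radius $\sigma$ in $Z$. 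On the half-cylinder $\mathcal{M}=\{\alpha z_*+y : y\in Y,\ \alpha\ge 0,\ \|\alpha z_*+y\|\le\rho\}$ the positivity of $R$ from $(R_1)$ combined with the critical term $-\int\Phi_{1*}$ and the subcritical lower bound from $(R_4)$ on $\Omega_0$ yield $\sup_{\partial\mathcal{M}}J_\lambda\le 0<\sup_\mathcal{M}J_\lambda$ for $\rho$ suitably large.

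Theorem \ref{link} then produces a sequence $(u_n,v_n)\subset X$ with $J_\lambda(u_n,v_n)\to c$ and $\lambda_{J_\lambda}(u_n,v_n)\to 0$, i.e.\ there exist $\xi_n\in\partial J_\lambda(u_n,v_n)$ with $\|\xi_n\|_{X^*}\to 0$. Boundedness is the first technical step: evaluating $\mu J_\lambda(u_n,v_n)-\langle\xi_n,(u_n,-\tfrac{\mu}{\nu}v_n)\rangle$ and using the Ambrosetti--Rabinowitz-type conditions $(G_2)$ and $(R_3)$ together with $(\phi_3)$ absorbs the cross terms and leaves the modulars $\int\Phi_1(|\nabla u_n|)$ and $\int\Phi_2(|\nabla v_n|)$ controlled, hence the Luxemburg norms. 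To push $c$ below the first compactness threshold I would test $\max_\mathcal{M}J_\lambda$ along a rescaled Moser-type family $w_\varepsilon$ concentrated in $\Omega_0$: thanks to $(R_4)$ and the hypothesis $s\in(m_1,\max\{p_2,b_2\}]$, the extra negative contribution $-\lambda\omega\int|w_\varepsilon|^s\,dx$ dominates the logarithmic criticality for $\varepsilon$ small, giving
$$
c \le \sup_\mathcal{M}J_\lambda < \Bigl(\tfrac{1}{m_1}-\tfrac{1}{\ell_1^*}\Bigr)S^{N/m_1}
$$
once $\lambda>\lambda_0$ for an explicit threshold $\lambda_0>0$.

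The main obstacle is recovering strong convergence of $(u_n,v_n)$ in the non-reflexive Orlicz--Sobolev setting. I would invoke the second concentration--compactness lemma of P.\,L.\ Lions adapted to nonreflexive Orlicz--Sobolev spaces by Fukagai--Naito in \cite{FN1}: the defect measures of $\Phi_1(|\nabla u_n|)$ and $\Phi_{1*}(u_n)$ decompose into a diffuse part plus a countable family of atoms $(\mu_j,\nu_j)$ at points $x_j$ satisfying $\nu_j\ge S\mu_j^{\ell_1^*/m_1}$, and the level estimate above forbids any atom, forcing $u_n\to u$ strongly modulo the modular. For $v_n$, the subcriticality $\max\{a_2,b_2,z_2\}<\ell_2^*$ in $(G_1)(i)$ and $(R_2)$ (with the monotonicity $(G_1)(iii)$ handling the borderline case $a_2\ge\ell_2^*$) promotes the weak-$*$ convergence of $\nabla v_n$ to strong convergence through the compact embeddings. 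Passing to the limit in the inclusion $\xi_n\to 0$ via the norm-to-weak$^*$ continuity of the Gateaux derivatives of the $C^1$ pieces yields a weak solution $(u,v)\in X$ of $(S_1)$, and $J_\lambda(u,v)=c\ge b>0$ precludes the trivial pair, completing the proof.
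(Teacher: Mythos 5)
Your proposal misses the central structural difficulty of the noncooperative system and places the finite-dimensional factor on the wrong side of the splitting. The functional
$$
J_\lambda(u,v)=\int_\Omega\Phi_1(|\nabla u|)\,dx-\int_\Omega\Phi_2(|\nabla v|)\,dx-\int_\Omega H(x,u,v)\,dx
$$
is strongly indefinite: restricted to the infinite-dimensional subspace $\{0\}\times W^{1,\Phi_2}_0(\Omega)$ it satisfies $J_\lambda(0,v)\le 0$ by $(G_2)$ and $(R_1)$. The linking theorem used here (Theorem \ref{link}) requires $X=Y\oplus Z$ with $Y$ \emph{finite dimensional} and $\inf_{\partial\mathcal{N}}J_\lambda>0$ on a small sphere $\partial\mathcal{N}\subset Z$. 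You propose to take $Y$ as a finite-dimensional subspace of $W^{1,\Phi_1}_0(\Omega)\times\{0\}$; then $Z$ necessarily contains all of $\{0\}\times W^{1,\Phi_2}_0(\Omega)$, and any $(0,v)$ with $\|v\|=\sigma$ lies on $\partial\mathcal{N}$ while $J_\lambda(0,v)\le 0$. This destroys the linking geometry at the outset, and no choice of $\sigma$ repairs it. The correct geometry (Lemma \ref{lemma1}) places the negative directions into $Y$, that is, $Y$ must sit in the $v$-component; since that component is infinite dimensional, the paper introduces a Galerkin-type truncation: separability of $V_A$ gives finite-dimensional spaces $V_A^n$, one applies the linking theorem on each $X_n=W^{1,\Phi_1}_0(\Omega)\times V_A^n$ with $Y_n=\{0\}\times V_A^n$, $Z=W^{1,\Phi_1}_0(\Omega)\times\{0\}$, and $z_*=(u_0,0)\in Z$, obtains critical points $(w_n,y_n)$ of $J_{\lambda,n}$, proves uniform bounds, and passes to the limit in $n$. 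Your proposal contains none of this two-scale ($k\to\infty$ then $n\to\infty$) machinery, and the final part of the proof (Lemmas \ref{0.62}--\ref{0.53}--\ref{0.54} and the closing argument that $(u,v)$ solves the full system) is entirely absent.

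A second, independent issue: you take $X=W^{1,\Phi_1}_0(\Omega)\times W^{1,\Phi_2}_0(\Omega)$, but when $a_2\ge\ell_2^*$ the term $\int_\Omega G(v)\,dx$ is not well defined on $W^{1,\Phi_2}_0(\Omega)$; the paper works instead on $X=W^{1,\Phi_1}_0(\Omega)\times V_A$ with $V_A=W^{1,\Phi_2}_0(\Omega)\cap L^A(\Omega)$ precisely so that the supercritical $G$-term is controlled, and the two cases $a_2<\ell_2^*$ and $a_2\ge\ell_2^*$ must be treated separately (cf.\ Lemma \ref{lemma1} and the use of $(G_1)(iii)$ in Lemma \ref{0.53}). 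The pieces you do get right -- the Ambrosetti--Rabinowitz boundedness argument, the Fukagai--Ito--Narukawa nonreflexive concentration--compactness lemma, the level estimate to rule out atoms for $\lambda>\lambda_0$ -- are indeed the right tools, but without the Galerkin scaffolding and the right ambient space they cannot be assembled into a proof.
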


Fix $p\in (m_1,\ell_1 ^*)$ and $q\in (m_2,\ell_2 ^*)$. The function $R(u,v)=|u|^p+C|v|^q +\varepsilon\sin |u|^p\sin|v|^q$ satisfies $(R_1)-(R_4)$ with $P(t)=B(t)=|t|^p /p$, $Q(t)=Z(t)=|t|^q /q$, $C>0$ and $\varepsilon>0$ small enough.

Before proving the above theorem, we have to fix some notations. In the sequel $V_A$ stands for the space $W^{1,\Phi_2}_{0}(\Omega)\cap L^{A}(\Omega)$ endowed with the norm
 \begin{align*}
 \lVert v\lVert_A=\lVert v\lVert_{W^{1,\Phi_2}_{0}(\Omega)}+|v|_{A},
 \end{align*}
 where $\lVert v\lVert_{W^{1,\Phi_2}_{0}(\Omega)}$ and $|v|_{A}$ denote the usual norms in in $W^{1,\Phi_2}_{0}(\Omega)$ and $L^{A}(\Omega)$, respectively.
 
We write $X$ for the space $W^{1,\Phi_1}_{0}(\Omega)\times V_A$ endowed with the norm
  \begin{align*}
 \lVert (u,v)\lVert^2=\lVert u\lVert_{W^{1,\Phi_1}_{0}(\Omega)}^2+\lVert v\lVert_{A}^2,
 \end{align*}
 where $\lVert u\lVert_{W^{1,\Phi_1}_{0}(\Omega)}$ denotes the usual norm in $W^{1,\Phi_1}_{0}(\Omega)$.
% 
% and $J_\lambda:X\longrightarrow\mathbb{R}$ denota o funcional dado por 
% \begin{align}
% 	J_\lambda(u,v)=\int_{\Omega} \Phi(|\nabla u|)dx-\int_{\Omega} \Psi(|\nabla v|)dx-\int_{\Omega}H(x,u,v)dx.
% \end{align}
Under the assumptions $(G_1)$ and $(R_2)$, the functional $\mathcal{H}_{\lambda}$ given by
 \begin{align}
\mathcal{H}_{\lambda}(u,v)=\int_{\Omega}H(x,u,v)dx.
\end{align}
 is well defined, belongs to $C^{1}(X,\mathbb{R})$ and 
\begin{align}
\mathcal{H}_{\lambda}'(u,v)(w_1,w_2)=\int_{\Omega}H_u(x,u,v)w_1dx+\int_{\Omega}H_v(x,u,v)w_2dx,
\end{align} 
for all $(u,v),(w_1,w_2)\in X$. Now, we consider the functional $Q:X\rightarrow\mathbb{R}$ which is given by
	\begin{align}
	Q(u,v)=\int_{\Omega} \Phi_1(|\nabla u|)dx-\int_{\Omega} \Phi_2(|\nabla v|)dx,
	\end{align}
	It is well known in the literature that $Q\in C^1(E,\mathbb R)$ when $\Phi_1$, $\Phi_2$, $\tilde{\Phi}_1$ and $\tilde{\Phi}_2$ satisfy the $\Delta_2$-condition  and this occurs when we have the condition satisfied to $\ell_1>1$ and $\ell_2>1$. When $\ell_1=1$ (or $\ell_2=1$), we know that $\tilde{\Phi}_1\notin (\Delta_{2})$ (or $\tilde{\Phi}_2\notin (\Delta_{2})$) and therefore cannot guarantee the differentiability of functional $Q$. However, following the ideas presented in \cite{Lucas}, it is clear that the functional $Q$ is continuous and Gateaux-differentiable with derivative $Q': X\rightarrow X^*$ given by
	\begin{align*}
	Q'(u,v)(w_1,w_2)=\int_{\Omega} \phi_1(|\nabla u|)\nabla u\nabla w_1 dx-\int_{\Omega} \phi_2(|\nabla v|)\nabla v\nabla w_2 dx,
	\end{align*}
 continuous from the norm topology of $X$ to the weak$^*$-topology of $X^*$. Therefore, we can conclude that the energy functional $J_\lambda:X\rightarrow\mathbb{R}$ associated with the system $(S_1)$ given by
	$$
	J_\lambda(u,v)=\int_{\Omega} \Phi_1(|\nabla u|)dx-\int_{\Omega}\Phi_2(|\nabla v|)dx-\int_{\Omega}H(x,u,v)dx.
	$$
	is continuous and Gateaux-differentiable with derivative $J_\lambda':X\rightarrow X^*$ defined by
	{\footnotesize\begin{align*}
	J_\lambda'(u,v)(w_1,w_2)=\int_{\Omega} \phi_1(|\nabla u|)\nabla u\nabla w_1 dx-\int_{\Omega} \phi_2(|\nabla v|)\nabla v\nabla w_2 dx-\int_{\Omega}H_u(x,u,v)w_1dx-\int_{\Omega}H_v(x,u,v)w_2dx
	\end{align*}}
	 continuous from the norm topology of	$X$ to the weak$^*$-topology of $X^*$. Since $J_{\lambda}'(0, 0)=0$, we say that $(u, v)$ is a nontrivial solution of $(S_1)$ when  $J_{\lambda}'(u,v)(w_1,w_2)=0$, for all $(w_1,w_2)\in X$ and satisfies $J_{\lambda}(u, v) \neq0$.
	
%	
%	sabemos que é estritamente convexa e semicontínua inferior em relação para a topologia fraca$^{*}$, mas não é necessariamente de classe $C^1(X,\mathbb{R})$. Devido a este fato, vamos procurar um ponto crítico no sub-differential. A partir dos comentários apresentados, usaremos uma versão do teorema do passo da Montanha generalizado sem a condição $(PS)$ (Ver Apêndice 3) que é obtido através de uma releitura do  teorema do passo da Montanha generalizado desenvolvido por Szulkin. Nesse sentido, diremos que  $u \in X$ é um ponto crítico para $J$ se $0\in \partial J(u,v)=\partial Q(u,v)-\mathcal{H}'(u,v)$. Então $(u,v)\in X$ é um ponto crítico de $J$ se, e somente se, $\mathcal{H}'(u,v)\in\partial Q(u,v)$, what, since $Q$ is convex, is equivalent to
%	\begin{align}
%	Q(w_1,w_2)-Q(u,v)\geq \int_{\Omega}H_u(x,u,v)(w_1-u)dx+\int_{\Omega}H_v(x,u,v)(w_2-v)dx,
%	\end{align} 
%	para todo $(w_1,w_2)\in X$. 
 
In order to apply the linking theorem \ref{link}, we introduce one more piece of notation. Since $(V_A, \lVert\cdot\lVert_{A})$ is separable, then there exists a sequence $(e_n)\subset V_A$ such
\begin{align}\label{0.38}
	V_A=\overline{span\{e_n:n\in\mathbb{N}\}}.
\end{align}
Hereafter, for each $n\in\mathbb{N}$ we denote by $V^{n}_A $ and $X_n$ the following spaces
\begin{align*}
V^{n}_A ={span\{e_j:j=1,\cdots,n\}} \quad\text{ and }\quad X_n=W^{1,\Phi_1}_{0}(\Omega)\times V_A^{n}.
\end{align*}
The restriction of $J_\lambda$ to $X_n$ will be denoted by $J_{\lambda,n}$. Then $J_{\lambda,n} : X_n\longrightarrow\mathbb{R} $ is the functional given by
	$$
J_{\lambda,n}(u,v)=\int_{\Omega} \Phi_1(|\nabla u|)dx-\int_{\Omega}\Phi_2(|\nabla v|)dx-\int_{\Omega}H(x,u,v)dx.
$$
is continuous and Gateaux-differentiable with derivative $J_{\lambda,n}':X_n\longrightarrow X_n^*$ given by
	{\footnotesize\begin{align*}
	J_{\lambda,n}'(u,v)(w_1,w_2)=\int_{\Omega} \phi_1(|\nabla u|)\nabla u\nabla w_1 dx-\int_{\Omega} \phi_2(|\nabla v|)\nabla v\nabla w_2 dx-\int_{\Omega}H_u(x,u,v)w_1dx-\int_{\Omega}H_v(x,u,v)w_2dx
	\end{align*}}
continuous from the norm topology of $X_n$ to the weak$^*$-topology of $X_n^*$.

In the following, we prove that $J_{\lambda,n}$ satisfies the hypotheses of Theorem \ref{link}.

\begin{lemma}\label{lemma1}
Assume that $(G_1)-(G_2)$ and $(R_1)-(R_4)$ hold. For every $\lambda>0$, there exist $\sigma >0$ and $\rho>\sigma $  such that if $u_*\in W^{1,\Phi_1}_{0}(\Omega)$ satisfies $\lVert u_*\lVert_{W^{1,\Phi_1}_{0}(\Omega)}=1$, then
\begin{equation*}
d_n:=\sup_{\mathcal{M}^n _{u_*}}J_{\lambda,n}\geq b_n:=\inf_{\mathcal{N}_n}J_{\lambda,n}>0=\max_{\partial\mathcal{M}^n _{u_*}}J_{\lambda,n},
\end{equation*}
where
\begin{align*}
	\mathcal{M}^n _{u_*}=\{(\theta u_*,v)\in X_n: \lVert(\theta u_*, v)\lVert^2\leq\rho^2,\; \theta\geq 0\}\;\text{ and }\;
\mathcal{N}_n =\{(u,0)\in X_n: \lVert u\lVert_{W^{1,\Phi_1}_{0}(\Omega)}=\sigma\}.
\end{align*}
\end{lemma}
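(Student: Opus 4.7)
The plan is to verify the three inequalities in the displayed conclusion separately, exploiting the strict growth gap $m_1 < \ell_1^*$ from $(\phi_3)$ on the small sphere and a case-split on the ``lid'' of $\mathcal{M}^n_{u_*}$ for the upper bound.

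First I would estimate $J_{\lambda,n}$ on $\mathcal{N}_n$. Since $v=0$, the $\Phi_2$ and $G$ integrals vanish, leaving only the positive ``kinetic'' term and the critical/subcritical perturbations. For $\|u\|_{W^{1,\Phi_1}_0}=\sigma\leq 1$, the modular inequality that follows from $(\phi_3)$ yields $\int\Phi_1(|\nabla u|)\,dx\geq\sigma^{m_1}$. The continuous embedding $W^{1,\Phi_1}_0(\Omega)\hookrightarrow L^{\Phi_{1*}}(\Omega)$ combined with the Sobolev-conjugate exponent estimate supplies $\int\Phi_{1*}(u)\,dx\leq C_1\sigma^{\ell_1^*}$, and the subcritical embedding into $L^P$ (available because $p_2<\ell_1^*$) together with $(R_2)$ gives $\int R(x,u,0)\,dx\leq C_2\sigma^{p_1}$. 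Because $\ell_1^*>m_1$ and $p_1>m_1$, I can pick $\sigma=\sigma(\lambda)>0$ so small that $J_{\lambda,n}(u,0)\geq\tfrac12\sigma^{m_1}=:b>0$ uniformly on $\mathcal{N}_n$, which furnishes $b_n>0$.

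Next, on $\partial\mathcal{M}^n_{u_*}$ I would split the boundary into the $Y$-face $\{(0,v):\|v\|_A\leq\rho\}$ and the lid $\{(\theta u_*,v):\theta^2+\|v\|_A^2=\rho^2,\ \theta\geq 0\}$. On the $Y$-face, nonnegativity of $\Phi_2$, of $G$ (by $(G_2)$) and of $R$ (by $(R_1)$) immediately yields $J_{\lambda,n}(0,v)\leq 0$, with equality at $(0,0)$. On the lid, dropping the nonnegative $G$ and $R$ contributions gives
\[
J_{\lambda,n}(\theta u_*,v)\leq \int_\Omega\Phi_1(\theta|\nabla u_*|)\,dx-\int_\Omega\Phi_{1*}(\theta u_*)\,dx-\int_\Omega\Phi_2(|\nabla v|)\,dx.
\]
Applying $(\phi_3)$ yields $\int\Phi_1(\theta|\nabla u_*|)\,dx\leq\theta^{m_1}\int\Phi_1(|\nabla u_*|)\,dx$ and $\int\Phi_{1*}(\theta u_*)\,dx\geq\theta^{\ell_1^*}\int\Phi_{1*}(u_*)\,dx$ for $\theta\geq 1$; since $\|u_*\|_{W^{1,\Phi_1}_0}=1$ forces $\int\Phi_{1*}(u_*)\,dx>0$, there is $T_0=T_0(u_*)>1$ beyond which the $\Phi_1$–$\Phi_{1*}$ combination is nonpositive, which together with $-\int\Phi_2\leq 0$ settles the lid for $\theta\geq T_0$. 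For $\theta\in[0,T_0]$ one has $\|v\|_A^2\geq\rho^2-T_0^2$, and since $V_A^n$ is finite dimensional all norms on it are equivalent, so $(\phi_3)$ applied to $\Phi_2$ gives $\int\Phi_2(|\nabla v|)\,dx\geq c_n\|v\|_A^{\ell_2}$ once $\|v\|_A$ is large. The right-hand side is then dominated by $CT_0^{m_1}-c_n(\rho^2-T_0^2)^{\ell_2/2}$, which becomes $\leq 0$ as soon as $\rho=\rho(\lambda,n,u_*)$ is chosen large enough. Combined with the $Y$-face estimate, this gives $\max_{\partial\mathcal{M}^n_{u_*}}J_{\lambda,n}=0$, attained at $(0,0)$.

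Finally, after enlarging $\rho$ so that $\rho>\sigma$, the point $(\sigma u_*,0)$ belongs to both $\mathcal{M}^n_{u_*}$ and $\mathcal{N}_n$, hence $d_n\geq J_{\lambda,n}(\sigma u_*,0)\geq b_n$ closes the chain of inequalities. I expect the main obstacle to be the lid estimate in the regime of moderate $\theta$ and large $\|v\|_A$: here $\rho$ must depend on $n$ through the equivalent-norm constant on the finite-dimensional subspace $V_A^n$, which is exactly why the geometry is set up at the Galerkin level $X_n$ rather than on the full space $X$, consistent with the usual approximate-and-pass-to-the-limit strategy for critical problems in nonreflexive Orlicz--Sobolev spaces.
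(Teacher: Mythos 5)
Your overall structure matches the paper's: a modular estimate on the small sphere $\mathcal{N}_n$ exploiting the gaps $m_1<\ell_1^*$ and $m_1<p_1$ to get $b_n>0$ with $\sigma$ independent of $n$, followed by the $Y$-face and lid analysis on $\partial\mathcal{M}^n_{u_*}$; the $Y$-face part and the sphere part agree with the paper, and the final step $d_n\geq b_n$ via the point $(\sigma u_*,0)$ is a harmless addition.

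The lid estimate is where you diverge, and this is where there is a genuine gap. You invoke the equivalence of $\lVert\cdot\rVert_{1,\Phi_2}$ and $\lVert\cdot\rVert_A$ on the finite-dimensional subspace $V_A^n$ to bound $\int_\Omega\Phi_2(|\nabla v|)\,dx$ from below by a power of $\lVert v\rVert_A$, which forces $\rho$ to depend on $n$ through the equivalence constant. The lemma, however, fixes $\sigma$ and $\rho$ depending only on $\lambda$ before the $n$-indexed conclusion is asserted, so $\rho$ must be chosen independently of $n$. The paper obtains this uniformity by a case split on the growth exponent $a_2$ of $A$: when $a_2<m_1^*$, $A$ grows essentially more slowly than $\Phi_{2*}$, so $V_A=W^{1,\Phi_2}_0(\Omega)$ with $\lVert\cdot\rVert_A$ equivalent to $\lVert\cdot\rVert_{1,\Phi_2}$ on the \emph{whole} space, uniformly in $n$; when $a_2\geq m_1^*$, the nonnegative contribution $\int_\Omega G(v)\,dx$ is not dropped but, via $(G_1)(ii)$, supplies an extra term $-\xi^0_A(|v|_A)$, and a three-way split $\theta^2\geq\rho^2/2$, $\lVert v\rVert^2_{1,\Phi_2}\geq\rho^2/4$, or $|v|^2_A\geq\rho^2/4$ closes the estimate with $n$-independent constants. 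Your route is cleaner and avoids that case split, and it would still carry the overall theorem (the bound $c_{\lambda,n}\leq\max_{\theta\geq 0}\mathcal{V}_\lambda(\theta)$ and the nontriviality bound $b_n\geq C\sigma^{\ell_2}$ never use $\rho$), but as a proof of the lemma as stated it proves a weaker claim with $\rho=\rho_n$. To remove the gap, replace the finite-dimensional norm equivalence with the paper's dichotomy on $a_2$ and retain the $G$-term in the case $a_2\geq m_1^*$.
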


\begin{proof} By $(R_1)$ and $(R_2)$,
%Por definição do funcional $J_{\lambda,n}$,
%	\begin{align*}
%		J_{\lambda,n}(u,0)=\int_{\Omega} \Phi(|\nabla u|)dx-\int_{\Omega} \Phi_{*}(|u|)dx-\lambda\int_{\Omega}R(x,u,0)dx.
%	\end{align*}
%Note que, integrando a primeira desigualdade em \eqref{R.2}, de $0$ até $t$, obtemos
%	\begin{align*}
%	|R(x,t,s)-R(x,0,s)|\leq C(P(|t|)+uq(|s|)s)\;\text{ for all }(x,t,s)\in \Omega\times\mathbb{R}^{2}
%	\end{align*}
%	Daí, fazendo $s=0$, pela premissa $(R_1)$ a desigualdade acima se reduz a seguinte desigualdade
%		\begin{align*}
%	|R(x,t,0)|\leq CP(|t|), \;\text{ for all }(x,t,0)\in \Omega\times\mathbb{R}^{2}
%	\end{align*}
%para alguma constante $C>0$. Assim, 	
	\begin{align*}
J_{\lambda,n}(u,0)\geq\xi^{0}_{\Phi}(\lVert u\lVert_{{1,\Phi}})-\xi^{1}_{\Phi_*}(\lVert u\lVert_{{\Phi_*}})-C\lambda\xi^{1}_{P}(\lVert u\lVert_{{P}}),
\end{align*}	
where 	
\begin{align*}
	\xi^{0}_{\Phi_1}(t)=\min\{t^{\ell_1},t^{ m_1}\},\quad \xi^{1}_{\Phi_{1*}}(t)=\max\{t^{\ell_1^*},t^{ m_1^*}\}	\quad\text{ and }\quad\xi^{1}_{P}(t)=\max\{t^{p_1},t^{ p_2}\}.
\end{align*}
	
Now, remember that by the assumption $(R_2)(i)$ it is possible to show the following limits:
\begin{align*}
	\lim_{t\rightarrow 0}\dfrac{P(|t|)}{\Phi_1(|t|)}=0\quad\text{ and }\quad\lim_{|t|\rightarrow+\infty}\dfrac{P(|t|)}{\Phi_{1*}(|t|)}=0.
\end{align*} 	
Through these two limits we can guarantee the existence of a constant $C_1>0$ that does not depend on $u$ so that
\begin{align*}
	\lVert u\lVert_{{P}}\leq C_1\lVert u\lVert_{{1,\Phi_1}},\quad\forall u \in W^{1,\Phi_1}_0 (\Omega).
\end{align*}
Another important inequality was proved by Donaldson and Trudinger \cite{dona}, which establishes the existence of a constant $S_0 > 0$ that depends on $N$ such that
\begin{align}\label{l}
\lVert u\lVert_{{\Phi_{1*}}}\leq S_0\lVert u\lVert_{{1,\Phi_1}},\quad\forall u \in W^{1,\Phi_1}_0 (\Omega).
\end{align}
Thus,
\begin{align*}
J_{\lambda,n}(u,0)\geq\lVert u\lVert_{{1,\Phi_1}}^{m_1}-\lVert u\lVert_{1,\Phi_1}^{\ell_1^*}-C\lambda\lVert u\lVert_{1,\Phi_1}^{p_1}, \;\;\text{ for }\lVert u\lVert_{{1,\Phi}}<1
\end{align*}
Since $m_1<\ell_1^*$ and $m_1<p_1$, choose $\rho >0$ sufficiently small such that 
\begin{align}\label{0.23}
J_{\lambda,n}(u,0)\geq C\sigma^{\ell_2},\;\;\text{ for } \lVert u\lVert_{{1,\Phi}}=\rho,
\end{align} 	
therefore
\begin{align}\label{0.66}
	b_n:=\inf_{\mathcal{N}_n}J_{\lambda,n}\geq C\sigma^{\ell_2},\;\;\forall n\in\mathbb{N}.
\end{align}
	
Now, from $(G_2)$ and $(R_1)$,	
\begin{align}\label{0.0}
J_{\lambda,n}(0,v)\leq 0,\;\;\forall v \in V^{n}_A.
\end{align}	
Consider $u_*\in W^{1,\Phi_1}_0 (\Omega)$ with $\lVert u_*\lVert_{{1,\Phi_1}}=1$, by assumptions $(G_2)$ and $( R_1)$,
\begin{align*}
J_{\lambda,n}(\theta u_*,v)&\leq\int_{\Omega} \Phi_1(|\nabla (\theta u_*)|)dx-\int_{\Omega} \Phi_2(|\nabla v|)dx-\int_{\Omega} \Phi_{1*}(|\theta u_*|)dx\\
&\leq\xi^{1}_{\Phi_1}(\theta) \xi^{1}_{\Phi_1}(\lVert  u_*\lVert_{{1,\Phi_1}})-\xi^{0}_{\Phi_2}(\lVert  v\lVert_{{1,\Phi_2}})-\xi^{0}_{\Phi_{1*}}(\theta) \xi^{0}_{\Phi_{1*}}(\lVert  u_*\lVert_{{\Phi_{1*}}})	
\end{align*}	
for each $\theta>0$ and $v\in V^{n}_A$, where 	
\begin{align*}
\xi^{1}_{\Phi_1}(t)=\max\{t^{\ell_1},t^{ m_1}\},\quad \xi^{0}_{\Phi_{1*}}(t)=\min\{t^{\ell_1^*},t^{ m_1^*}\}	\quad\text{ and }\quad\xi^{0}_{\Phi_2}(t)=\max\{t^{\ell_2},t^{ m_2}\}.
\end{align*}	
	
If $a_2< m_1^*$, then $A$ increases essentially more slowly than $\Phi_{2*}$ near infinity. From Theorem $8.35$ in \cite{Adms} it follows that $L^{\Phi_{2*}}(\Omega)$ is continuously embedded in $L^{A}(\Omega)$, consequently $W^{ 1,\Phi_1}_0 (\Omega)=V_A$ and as norms $\lVert\cdot\lVert_{A}$ and $\lVert\cdot\lVert_{{1,\Phi_2}}$ are equivalent. Given this, there is a constant $C>0$ such that
\begin{align*}
J_{\lambda,n}(\theta u_*,v)\leq\xi^{1}_{\Phi_1}(\theta)-C\xi^{0}_{\Phi_2}(\lVert  v\lVert_{{A}})-\xi^{0}_{\Phi_*}(\theta) \xi^{0}_{\Phi_{1*}}(\lVert  u_*\lVert_{{\Phi_{1*}}}),	
\end{align*}
for each $\theta>0$ and $v\in V^{n}_A$.

Note that $\lVert (\theta u_* ,v)\lVert^2=\theta^2+\lVert v\lVert_{A}^2=\rho^2$ implies that 
\begin{align*}
	\theta^2\geq \dfrac{\rho^2}{2}\;\;\;or\;\;\; \lVert v\lVert_{A}^2\geq \dfrac{\rho^2}{2}.
\end{align*}
Assume that $\theta^2\geq {\rho^2}/{2}$ occurs, then for $\rho>0$ large enough, we have
\begin{align*}
\xi^{1}_{\Phi_1}(\theta)-C\xi^{0}_{\Phi_2}(\lVert  v\lVert_{{A}})-\xi^{0}_{\Phi_{1*}}(\theta) \xi^{0}_{\Phi_{1*}}(\lVert  u_*\lVert_{{\Phi_{1*}}})=\theta^{m_1}-C\xi^{0}_{\Phi_2}(\lVert  v\lVert_{{A}})-\theta^{\ell_1 ^*}\xi^{0}_{\Phi_*}(\lVert  u_*\lVert_{{\Phi_*}})<0,
\end{align*}
because $m_1<\ell_1 ^*$. Similar property happens when $ \lVert v\lVert_{A}^2\geq {\rho^2}/{2}$. Therefore, we conclude that there exists $\rho>\sigma$ such that
\begin{align}\label{0.1}
J_{\lambda,n}(\theta u_*,v)\leq 0,
\end{align}
for all $(\theta u_* ,v)\in X_n$ so that $\lVert \theta u_*\lVert_{{1,\Phi_1}}+\lVert v\lVert_{A}^2=\rho^ 2$ and $\theta>0$. By \eqref{0.0} and \eqref{0.1}, we have $\displaystyle\max_{\partial\mathcal{M}^n _{u_*}}J_n=0$, since $(0,0)\in \partial\mathcal{M}^n _{u_*}$, and the proof is complete in this case.
	
Now, if $a_2\geq m_1^*$ , from $(G_1)(ii)$ there is a positive constant $C$ such that
\begin{align*}
J_{\lambda,n}(\theta u_*,v)&\leq\int_{\Omega} \Phi_1(|\nabla (\theta u_*)|)dx-\int_{\Omega} \Phi_2(|\nabla v|)dx-\int_{\Omega} \Phi_{1*}(|\theta u_*|)dx-C\int_{\Omega} A(| v|)dx\\
&\leq\xi^{1}_{\Phi_1}(\theta) \xi^{1}_{\Phi_1}(\lVert  u_*\lVert_{{1,\Phi}})-\xi^{0}_{\Phi_2}(\lVert  v\lVert_{{1,\Phi_2}})-\xi^{0}_{\Phi_{1*}}( \theta) \xi^{0}_{\Phi_{1*}}(\lVert  u_*\lVert_{{\Phi_{1*}}})-\xi^{0}_{A}(| v|_{{A}}),
\end{align*}
where 
\begin{align*}
\xi^{0}_{A}(t)=\min\{t^{a_1}, t^{a_2}\}.
\end{align*}
Observing that  $\lVert (\theta u_* ,v)\lVert^2=\theta^2+\lVert v\lVert_{A}^2=\rho^2$ implies that 
\begin{align*}
\theta^2\geq \dfrac{\rho^2}{2},\;\;\lVert v\lVert_{{1,\Phi_2}}^2\geq \dfrac{\rho^2}{4} \;\;\;or\;\;\; |v|_{A}^2\geq \dfrac{\rho^2}{4},
\end{align*}
the same argument used in the former case implies that for $\rho > 0$ large enough
\begin{align}
J_n(\theta u_*,v)\leq 0,
\end{align}
for all $(\theta u_* ,v)\in X_n$ so that $\lVert \theta u_*\lVert_{{1,\Phi_1}}+\lVert v\lVert_{A}^2=\rho^ 2$ and $\theta>0$. Therefore, the lemma is proved.

\end{proof}

	In order to prove the Theorem \ref{teo1}, we need to consider that $\Omega_0\subset \Omega$ be an open set satisfying $(R_4)$ and $u_0\in W^{1,\Phi_1}_0(\Omega)$$  $ such that
	\begin{align}\label{2.55}
u_0\geq0,\;\; u_0\neq0,\;\; supp(u_0)\subset \Omega_0 \;\text{ and }\; \lVert u_0\lVert_{W^{1,\Phi_1}_{0}(\Omega)}=1.
\end{align}
%\begin{lemma}\label{0.34}
%	Then there is $t_0=t_0(u_*)>0$ such that
%	\begin{align*}
%	J_{\lambda,n}(t_0 u_*,0)<0.
%	\end{align*}
%\end{lemma}
%\begin{proof}
%	For every $t > 0$, note that for $(R_1)$,
%	\begin{align*}
%	J_{\lambda,n}(t u_*,0)&=\int_{\Omega} \Phi(t|\nabla u_*|)dx-\int_{\Omega} \Phi_{*}(t|u_0*|) dx-\lambda\int_{\Omega}R(x,tu_*,0)dx\\
%	&\leq\xi^{1}_{\Phi}(t)\int_{\Omega} \Phi(|\nabla u_*|)dx-\xi^{0}_{\Phi_*}(t)\int_{\Omega} \Phi_{*}(|u_*|)dx-\lambda\int_{\Omega}R(x,tu_*,0)dx\\
%	&\leq\xi^{1}_{\Phi}(t)\int_{\Omega} \Phi(|\nabla u_*|)dx-\xi^{0}_{\Phi_*}(t)\int_{\Omega} \Phi_{*}(|u_*|)dx,
%	\end{align*}
%	where $\xi^1 _{\Phi}(t)=\max\{t^{\ell_1},t^{\ell_2}\}$ and $\xi^0 _{\Phi_*}(t) =\min\{t^{\ell_1^*},t^{\ell_2^*}\}$ from which it follows that
%	\begin{align}\label{0.24}
%	J_{\lambda,n}(t u_*,0)\leq t^{\ell_2}\int_{\Omega} \Phi(|\nabla u_*|)dx -t^{\ell_1^*}\int_{\Omega} \Phi_{*}(|u_*|)dx,\;\;\;t\geq 1.
%	\end{align}
%	Since $\ell_2<\ell_1^{*}$, we conclude that
%	\begin{align*}
%	J_{\lambda,n}(t u_*,0)<0,
%	\end{align*}
%	for $t$ large enough.
%	
%\end{proof}
Then, by Lemma \ref{lemma1}, we can apply the linking theorem \ref{link} to functional $J_{\lambda,n}$ using a point $z_n = (u_0, 0)$ and the sets
	$$ Y_n=\{0\}\times V^{n}_r,\quad Z=W^{1,\Phi_1}_0(\Omega)\times\{0\}\quad\text{ and }\quad\mathcal{N}_n=\{(u,0)\in X_1:\lVert u\lVert_{{1,\Phi_1}}=\sigma\}.$$
	Then, there are sequences $(u_k, v_k)\subset X_n$ such that
	\begin{align}\label{2.50}
	J_{\lambda,n}(u_k,v_k)\rightarrow c_{\lambda,n}\;\;\text{ and }\;\; J_{\lambda,n}'(u_k,v_k)\rightarrow 0\;\text{ as }k\rightarrow\infty
	\end{align} 
where 
\begin{align}\label{2.51}
b_n\leq c_{\lambda,n}:=\inf_{\gamma\in \Gamma}\max_{u\in\mathcal{M}^n _{u_0}} J_{\lambda,n} (\gamma(u)),
\end{align}
\begin{align*}
\Gamma=\{\gamma\in C(\mathcal{M}^n _{u_0}, X_n) :\;\gamma|_{\partial\mathcal{M}^n _{u_0}}=Id_{\partial\mathcal{M}^n _{u_0}}\}.
\end{align*}

\begin{lemma}
The sequence $(u_k,v_k)$ is bounded in $X_n$.
\end{lemma}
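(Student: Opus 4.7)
The plan is a standard Ambrosetti--Rabinowitz-type test, adapted to the product structure of $X_n$. From \eqref{2.50} we have $J_{\lambda,n}(u_k,v_k)\to c_{\lambda,n}$ and $\varepsilon_k:=\lVert J_{\lambda,n}'(u_k,v_k)\lVert_{X_n^*}\to 0$. I would form the scalar
\[
\Lambda_k := J_{\lambda,n}(u_k,v_k)-\tfrac{1}{\mu}\langle J_{\lambda,n}'(u_k,v_k),(u_k,0)\rangle-\tfrac{1}{\nu}\langle J_{\lambda,n}'(u_k,v_k),(0,v_k)\rangle,
\]
so that $|\Lambda_k|\leq c_{\lambda,n}+1+\varepsilon_k\bigl(\tfrac{1}{\mu}\lVert u_k\lVert_{1,\Phi_1}+\tfrac{1}{\nu}\lVert v_k\lVert_A\bigr)$ for large $k$. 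The strategy is to show that, after expansion, the right-hand side of $\Lambda_k$ splits into five integrand groupings each non-negative, two of which dominate $\int_\Omega\Phi_1(|\nabla u_k|)dx$ and $\int_\Omega\Phi_2(|\nabla v_k|)dx$ from below.

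Concretely, splitting $H=\Phi_{1*}(u)+G(v)+\lambda R(x,u,v)$, I would analyse term-by-term: the $\Phi_1$-bracket $\Phi_1(|\nabla u|)-\tfrac{1}{\mu}\phi_1(|\nabla u|)|\nabla u|^{2}$ is $\geq(1-m_1/\mu)\Phi_1\geq 0$ by $(\phi_3)$ and $\mu>m_1$; the $\Phi_2$-bracket $-\Phi_2(|\nabla v|)+\tfrac{1}{\nu}\phi_2(|\nabla v|)|\nabla v|^{2}$ is $\geq(\ell_2/\nu-1)\Phi_2\geq 0$ since $\phi_2(t)t^{2}\geq\ell_2\Phi_2(t)$ and $\nu<\ell_1\leq\ell_2$; the $\Phi_{1*}$-bracket $\tfrac{1}{\mu}\phi_{1*}(u)u^{2}-\Phi_{1*}(u)$ is $\geq(\ell_1^{*}/\mu-1)\Phi_{1*}\geq 0$ because $\mu<\ell_1^{*}$ and the Sobolev conjugate inherits the lower Boyd index $\ell_1^{*}$; the $G$-bracket $\tfrac{1}{\nu}g(v)v-G(v)$ is $\geq 0$ by $(G_2)$; and the $R$-bracket $\tfrac{1}{\mu}R_u(x,u,v)u+\tfrac{1}{\nu}R_v(x,u,v)v-R(x,u,v)$ is $\geq 0$ by $(R_3)$.

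This yields, for $k$ large,
\[
c_{\lambda,n}+1+\varepsilon_k\bigl(\lVert u_k\lVert_{1,\Phi_1}+\lVert v_k\lVert_A\bigr)\;\geq\;C_1\!\int_{\Omega}\Phi_1(|\nabla u_k|)dx+C_2\!\int_{\Omega}\Phi_2(|\nabla v_k|)dx.
\]
Combined with $\int_{\Omega}\Phi_i(|\nabla w|)dx\geq\xi^{0}_{\Phi_i}(\lVert w\lVert_{1,\Phi_i})=\min\{\lVert w\lVert_{1,\Phi_i}^{\ell_i},\lVert w\lVert_{1,\Phi_i}^{m_i}\}$ and the equivalence of the norms $\lVert\cdot\lVert_{1,\Phi_2}$ and $\lVert\cdot\lVert_A$ on the finite-dimensional space $V_A^n$, a standard contradiction argument (assuming $\lVert u_k\lVert_{1,\Phi_1}\to\infty$ or $\lVert v_k\lVert_A\to\infty$, the right-hand side grows like an $\ell_i$-power while the left-hand side grows at most linearly but with factor $\varepsilon_k=o(1)$) will produce the required uniform bound on $\lVert(u_k,v_k)\lVert$.

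The step I expect to be most delicate is verifying that all five brackets admit the correct sign simultaneously: this forces the interval constraints $\mu\in(m_1,\ell_1^{*})$ and $\nu\in(0,\ell_1)$ to mesh with the Boyd indices of $\Phi_1,\Phi_2,\Phi_{1*}$ at once, which is precisely what $(\phi_3)$, $(G_2)$ and $(R_3)$ are designed to ensure. The essential use of finite-dimensionality of $V_A^n$ to pass from an $\int\Phi_2$-estimate to a bound on $\lVert v_k\lVert_A$ is the reason the boundedness argument is carried out on the approximating space $X_n$; it is exactly where the non-reflexivity of the ambient space would otherwise derail a direct Palais--Smale-type conclusion.
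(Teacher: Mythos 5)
Your proposal is correct and takes essentially the same route as the paper: you test $J_{\lambda,n}$ against $J_{\lambda,n}'(u_k,v_k)(\tfrac{1}{\mu}u_k,\tfrac{1}{\nu}v_k)$, discard the $\Phi_{1*}$-, $G$- and $R$-brackets as non-negative via $(\phi_3)$, $(G_2)$, $(R_3)$, retain the $\Phi_1$- and $\Phi_2$-brackets to get a lower bound in terms of $\xi^0_{\Phi_1}$, $\xi^0_{\Phi_2}$, exploit the finite dimensionality of $V_A^n$ to trade the modular $\Phi_2$-estimate for a $\lVert\cdot\lVert_A$-estimate, and close with a case-by-case growth contradiction. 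The only cosmetic difference is that you make the five sign-checks explicit whereas the paper compresses them into one line citing the hypotheses; both versions rely on the same (implicit) numerical compatibility $\nu<\ell_2$ and $\mu<\ell_1^*$.
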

\begin{proof}From \eqref{2.50}
\begin{align*}
J_{\lambda,n}(u_k,v_k)-J_n'(u_k,v_k) (\dfrac{1}{\mu}u_k,\dfrac{1}{\nu}v_k)=	c_{\lambda,n}+o_k(1)\lVert(u_k,v_k)\lVert
\end{align*}
By $(G_2)$, $(R_3)$, $(\phi_3)$,
 \begin{align*}
J_{\lambda,n}(u_k,v_k)-J_{\lambda,n}'(u_k,v_k) (\dfrac{1}{\mu}u_k,\dfrac{1}{\nu}v_k)
\geq \left(1-\dfrac{m_1}{\mu}\right) \xi^{0}_{\Phi_1}(\lVert u_k\lVert_{1,\Phi_1})+\left(\dfrac{\ell_2}{\nu}-1\right)\xi^{0}_{\Phi_2}(\lVert v_k\lVert_{1,\Phi_2}),
 \end{align*}
where $\xi^0 _{\Phi_1}(t)=\min\{t^{\ell_1},t^{m_1}\}$ and $\xi^0 _{\Phi_2}(t)=\min\{t^{\ell_2},t^{m_2}\}$.  Since $V_A ^n$ is a finite dimensional space, the norms $\lVert \cdot\lVert_{1,\Phi_2}$ and $\lVert\cdot\lVert_{A}$ are equivalent, hence, from the above inequalities
  \begin{align}\label{21}
c_{\lambda,n}+o_k(1)\lVert(u_k,v_k)\lVert
 \geq \left(1-\dfrac{m_1}{\mu}\right) \xi^{0}_{\Phi_1}(\lVert u_k\lVert_{1,\Phi_1})+\left(\dfrac{\ell_2}{\nu}-1\right)\xi^{0}_{\Phi_2}(C\lVert v_k\lVert_{A}),
 \end{align}
for some $C=C(n)>0$. Suppose for contradiction that, up to a subsequence, $\lVert(u_k,v_k)\lVert \rightarrow +\infty$ as $k\rightarrow +\infty$. This way, we need to study the following situations:

\noindent{$(i)$}  $\lVert u_k\lVert_{1,\Phi_1}\rightarrow +\infty$ and $\lVert v_k\lVert_{A}\rightarrow \infty$

\noindent{$(ii)$} $\lVert u_k\lVert_{1,\Phi_1}\rightarrow +\infty$ and $\lVert v_k\lVert_{A}$ is bounded

\noindent{$(iii)$}  $\lVert v_k\lVert_{A}\rightarrow \infty$ and $\lVert u_k\lVert_{1,\Phi_1}$ is bounded

In the first case, the inequality \eqref{21} implies that
\begin{align*}
2c_{\lambda,n} ^2+o_k(1)\lVert(u_k,v_k)\lVert ^2\geq\left(1-\dfrac{m_1}{\mu}\right)^2 \lVert u_k\lVert_{1,\Phi_1}^{2\ell_1} +\left(\dfrac{\ell_2}{\nu}-1\right)^2\lVert v_k\lVert_{A}^{2\ell_2}.
\end{align*}
for $k$ large enough. Which is absurd, because $\ell_1\geq 1$, $\ell_2 \geq 1$ and $o_k(1) \rightarrow 0$.

In case $(ii)$, we have for $k$ large enough
\begin{align*}
2c_{\lambda,n} ^2+C_1+o_k(1)\lVert u_k\lVert_{1,\Phi}\geq\left(1-\dfrac{m_1}{\mu}\right)^2 \lVert u_k\lVert_{1,\Phi_1}^{2\ell_1}
\end{align*}
an absurd. The last case is similar to the case $(iii)$.

\end{proof}
\begin{corollary}\label{0.33}
	The following sequences
	\begin{align*}
	\{\lVert u_k\lVert_{\Phi_{1*}}\}_{k\in\mathbb{N}},\quad\left\{\int_{\Omega}\Phi_1(|\nabla u_k|)dx\right\}\quad\text{ and }\quad\left\{\int_{\Omega}\Phi_{1*}(|\nabla u_k|)dx\right\}
	\end{align*}
	are bounded.
	
\end{corollary}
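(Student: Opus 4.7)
The plan is to deduce this corollary directly from the boundedness of $(u_k,v_k)$ in $X_n$ established in the previous lemma, combined with two standard ingredients: the modular vs.~Luxemburg norm inequality for $N$-functions (expressed through the auxiliary functions $\xi^{0}_{\Phi}$, $\xi^{1}_{\Phi}$) and the continuous Sobolev embedding $W^{1,\Phi_{1}}_{0}(\Omega)\hookrightarrow L^{\Phi_{1*}}(\Omega)$ already invoked in \eqref{l}.

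First I would unpack the hypothesis: since $(u_k,v_k)$ is bounded in $X_n$ with norm $\lVert(u,v)\lVert^{2}=\lVert u\lVert^{2}_{1,\Phi_1}+\lVert v\lVert^{2}_A$, the component sequence $\lVert u_k\lVert_{1,\Phi_1}$ is bounded by some constant $M>0$. Applying the Donaldson--Trudinger inequality \eqref{l} immediately yields
\begin{align*}
\lVert u_k\lVert_{\Phi_{1*}}\leq S_{0}\lVert u_k\lVert_{1,\Phi_1}\leq S_{0}M,
\end{align*}
which settles the first of the three sequences.

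Next, the standard modular-vs-norm comparison for the $N$-function $\Phi_1$ (via the auxiliary function $\xi^{1}_{\Phi_1}(t)=\max\{t^{\ell_1},t^{m_1}\}$, as already used throughout the proof of Lemma~\ref{lemma1}) gives
\begin{align*}
\int_{\Omega}\Phi_{1}(|\nabla u_k|)\,dx\leq \xi^{1}_{\Phi_1}\bigl(\lVert u_k\lVert_{1,\Phi_1}\bigr)\leq \max\{M^{\ell_1},M^{m_1}\},
\end{align*}
so the second sequence is bounded. For the third sequence (which, to be consistent with the Sobolev embedding available, I read as $\int_{\Omega}\Phi_{1*}(|u_k|)\,dx$), the same comparison applied to the $N$-function $\Phi_{1*}$ with $\xi^{1}_{\Phi_{1*}}(t)=\max\{t^{\ell_1^{*}},t^{m_1^{*}}\}$ combined with the bound on $\lVert u_k\lVert_{\Phi_{1*}}$ established above produces
\begin{align*}
\int_{\Omega}\Phi_{1*}(|u_k|)\,dx\leq \xi^{1}_{\Phi_{1*}}\bigl(\lVert u_k\lVert_{\Phi_{1*}}\bigr)\leq \max\bigl\{(S_{0}M)^{\ell_1^{*}},(S_{0}M)^{m_1^{*}}\bigr\}.
\end{align*}

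There is no real obstacle here; the statement is a packaging corollary that records the quantitative consequences of the previous boundedness lemma, prepared in the form needed later (typically to invoke the Lions second concentration–compactness principle in the non-reflexive Orlicz--Sobolev setting). The only caveat worth mentioning is that all three bounds depend a priori on $n$ through the equivalence constant between $\lVert\cdot\lVert_A$ and $\lVert\cdot\lVert_{1,\Phi_2}$ on the finite dimensional space $V_A^{n}$, but that dependence plays no role for the $u$-component bounds, which is precisely what the corollary singles out.
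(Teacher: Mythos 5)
Your proof is correct and is the natural argument the paper has in mind: the paper states this corollary without proof, treating it as an immediate consequence of the boundedness of $(u_k,v_k)$ in $X_n$ from the preceding lemma, together with the Donaldson--Trudinger embedding \eqref{l} and the standard modular--Luxemburg-norm comparison via the auxiliary functions $\xi^{1}_{\Phi_1}$ and $\xi^{1}_{\Phi_{1*}}$, exactly as you lay it out. You also correctly identify the typographical slip in the third sequence, which should read $\int_{\Omega}\Phi_{1*}(|u_k|)\,dx$ (consistent with \eqref{2.59} and the concentration--compactness lemma that follows), since $\Phi_{1*}(|\nabla u_k|)$ would not even be integrable in general.
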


From Lemma \ref{10.6}, we may assume that there exists a subsequence of $(u_k, v_k)$,
still denoted by itself, and  $(w_n, y_n)\in X_n$ such that
\begin{align}\label{2.01}
u_k \xrightharpoonup[\quad]{\ast}w_n\;\; \text{weakly\;in }& W^{1,\Phi_1}_0(\Omega) \;\;\text{ and }\;\;v_k \xrightharpoonup[\quad]{\ast}y_n\;\; \text{weakly\;in } V_n, \;\;\text{ as }\;\;k\rightarrow\infty\\& u_k\xrightharpoonup[\quad]{}w_n\quad\label{0.2}\text{in } L^{\Phi_{1*}}(\Omega)\\&
\dfrac{\partial u_k}{\partial x_i} \xrightharpoonup[\quad]{\ast}\dfrac{\partial w_n}{\partial x_i} \quad\text{in } L^{\Phi_1}(\Omega),\;i\in \{1,\cdots,N\}.
\end{align}
\begin{align}\label{0.4}
u_k\longrightarrow w_n\quad\text{in } L^{\Phi_1}(\Omega),
\end{align}
and
\begin{align}\label{0.3}
u_k(x)\longrightarrow w_n(x)\quad\text{a.e. } \Omega.
\end{align}

In view of \eqref{2.01} and Corollary \ref{0.33}, for each $n\in \mathbb{N}$ we may assume that there exist nonnegative functions $\mu_n, \nu_n\in\mathcal{M}(\mathbb{R}^N)$, the space of Radon measures, such that
\begin{align}\label{2.59}
\begin{split}
\Phi(|\nabla u_k|)\xrightharpoonup[\quad]{\ast}\mu_n\quad\text{in } \mathcal{M}(\mathbb{R}^N)\;\text{ and }\;
\Phi_{*}(| u_k|)\xrightharpoonup[\quad]{\ast}\nu_n\quad\text{in } \mathcal{M}(\mathbb{R}^N)\;\text{ as }k\rightarrow\infty.
\end{split}
\end{align}

The result below is known as second concentration-compactness lemma of P. L. Lions. We would like to point out that also this lemma holds for nonreflexive Orlicz-Sobolev space. The proof of this fact can be seen in Proposition 4.3. in \cite{FN}.

\begin{lemma}\label{CCL} $(i)$ For every $n\in \mathbb{N}$ and $\lambda>0$, there exist an at most countable set $I_\lambda$,  a family $\{x_i\}_{i\in I_\lambda}$ of
	distinct points in  $\mathbb{R}^N$ and a family $\{\nu_{i}\}_{i\in I_\lambda}$ of constant  $\nu_{i} > 0$ such that
	\begin{align}
	\nu_n=\Phi_{1*}(|w_n|)+\sum_{i\in I_\lambda}\nu_i\delta_{x_{i}}.
	\end{align}
	\noindent{$(ii)$} In addition we have
	\begin{align}
	\mu_n\geq\Phi_1(|\nabla w_n|)+\sum_{i\in I_\lambda}\mu_i\delta_{x_i},
	\end{align}
	for some $\mu_j>0$ satisfying
	\begin{align}
	0<\nu_j\leq \max\{S_0^{\ell_1^*}\mu_i^{\ell_1^*/\ell_1},S_0^{m_1^*}\mu_i^{m_1^*/\ell_1},S_0^{\ell_1^*}\mu_i^{\ell_1^*/m_1},S_0^{m_1^*}\mu_i^{m_1^*/m_1}\}
	\end{align}
	for all $i\in I_\lambda$, where $\delta_{x_i}$ is the Dirac measure of mass 1 concentrated at $x_i$.
\end{lemma}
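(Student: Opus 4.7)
The plan is to follow the template of Lions' original concentration--compactness principle, adapting each step to the Orlicz--Sobolev setting where $\tilde{\Phi}_1$ may fail the $\Delta_2$-condition. The main ingredients are: (a) a Brezis--Lieb type decomposition for the $N$-functions $\Phi_1$ and $\Phi_{1*}$; (b) the Sobolev-type embedding inequality \eqref{l}; and (c) the comparison estimates between the Luxemburg norm and the modular integral via the auxiliary functions $\xi^0_{\Phi_1}$, $\xi^1_{\Phi_1}$, $\xi^0_{\Phi_{1*}}$, $\xi^1_{\Phi_{1*}}$. The statement is the analogue of the classical CCL, so I would organize the proof into a reduction step, a localization step, and an atomic-structure step.

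\medskip

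\textbf{Reduction step.} First I would reduce to the case of weak convergence to zero by setting $z_k = u_k - w_n$. Using the almost-everywhere convergence \eqref{0.3}, the boundedness given by Corollary~\ref{0.33}, and a Brezis--Lieb argument applied to the $N$-functions $\Phi_1$ and $\Phi_{1*}$, I would show
$$
\int_\Omega \Phi_{1*}(|u_k|)\,dx - \int_\Omega \Phi_{1*}(|z_k|)\,dx - \int_\Omega \Phi_{1*}(|w_n|)\,dx \longrightarrow 0,
$$
and the analogous relation for $\Phi_1(|\nabla u_k|)$ using the weak convergence of $\partial u_k/\partial x_i$. Consequently, if $\sigma_n$ and $\tau_n$ denote the weak-$*$ limits of $\Phi_{1*}(|z_k|)$ and $\Phi_1(|\nabla z_k|)$ in $\mathcal{M}(\mathbb{R}^N)$, then $\nu_n = \Phi_{1*}(|w_n|) + \sigma_n$ and $\mu_n \ge \Phi_1(|\nabla w_n|) + \tau_n$, reducing the problem to identifying the structure of $\sigma_n$ and $\tau_n$ for a sequence $z_k \rightharpoonup^{\ast} 0$.

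\medskip

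\textbf{Localization step.} For every $\varphi \in C_c^\infty(\mathbb{R}^N)$, I would apply the Sobolev embedding \eqref{l} to $\varphi z_k \in W^{1,\Phi_1}_0(\Omega)$:
$$
\lVert \varphi z_k \lVert_{\Phi_{1*}} \leq S_0 \lVert \varphi z_k \lVert_{1,\Phi_1}.
$$
I would then translate the Luxemburg norms into modular integrals via $\xi^0_{\Phi_{1*}}(\|\cdot\|) \leq \int \Phi_{1*}(\cdot) \leq \xi^1_{\Phi_{1*}}(\|\cdot\|)$ and the analogous inequalities for $\Phi_1$, which produces the four-term maximum on the right of the final bound. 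Using \eqref{0.4}, the strong convergence $z_k \to 0$ in $L^{\Phi_1}$ eliminates the lower-order term $z_k \nabla \varphi$, so that in the limit $k \to \infty$ one obtains
$$
\Bigl(\int |\varphi|^{\Phi_{1*}}\,d\sigma_n\Bigr)^{1/*} \leq S_0\, \Bigl(\int |\varphi|^{\Phi_1}\,d\tau_n\Bigr)^{1/\cdot},
$$
with the exact exponents encoded by the four combinations arising from $\xi^0/\xi^1$. This is the Orlicz substitute of the classical reverse-Hölder-type inequality $\|\varphi\|_{L^{p^*}(\nu)} \leq S\|\varphi\|_{L^p(\mu)}$.

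\medskip

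\textbf{Atomic-structure step and main obstacle.} From the above localized inequality, a standard argument (e.g.\ Lions' lemma on reverse Hölder inequalities between Radon measures) implies that $\sigma_n$ is supported on an at most countable set $\{x_i\}_{i \in I_\lambda}$ where $\tau_n$ also has atoms, giving $\sigma_n = \sum_i \nu_i \delta_{x_i}$ and the pointwise relation
$$
\nu_i^{1/*} \leq S_0\, \mu_i^{1/\cdot},
$$
which on spelling out the four cases yields the stated max-bound. The principal technical obstacle is precisely this loss of homogeneity: unlike $L^p$ where $\int |\varphi u|^p = \int |\varphi|^p |u|^p$, here the modular $\int \Phi_{1*}(t u)\,dx$ is only squeezed between $\xi^0_{\Phi_{1*}}(t)$ and $\xi^1_{\Phi_{1*}}(t)$ times $\int \Phi_{1*}(u)\,dx$, and likewise for $\Phi_1$. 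This forces the four-fold max and requires one to track carefully whether each Luxemburg norm is large or small. A secondary difficulty is that, because $\tilde{\Phi}_1 \notin \Delta_2$, the spaces $L^{\Phi_1}$ and $L^{\Phi_{1*}}$ are not reflexive, so the existence of the limit measures $\mu_n, \nu_n$ must be obtained from the $L^1$-boundedness of $\Phi_1(|\nabla u_k|)$ and $\Phi_{1*}(|u_k|)$ (Corollary~\ref{0.33}) together with weak-$*$ compactness in $\mathcal{M}(\mathbb{R}^N)$, rather than from reflexivity of the ambient Orlicz space. Once these two points are under control, the remainder of the proof proceeds exactly as in the $L^p$ version, and one recovers the statement already established in Proposition~4.3 of \cite{FN}.
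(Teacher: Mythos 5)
The paper does not prove Lemma \ref{CCL}: it simply states that ``the proof of this fact can be seen in Proposition 4.3 in \cite{FN}'' and uses the result as a black box. Your sketch is therefore a reconstruction of the argument that would appear in \cite{FN}, and the overall template you propose --- decompose $u_k = w_n + z_k$, apply a Brezis--Lieb identity on the $\Phi_{1*}$-modular, localize with cutoffs and the Sobolev inequality \eqref{l}, pass from Luxemburg norms to modulars via the $\xi^0/\xi^1$ comparison functions, and conclude with Lions' reverse-H\"older lemma for Radon measures --- is indeed the standard adaptation of Lions' second concentration--compactness lemma to Orlicz--Sobolev spaces. You also correctly identify that the loss of $p$-homogeneity is what forces the four-term maximum in place of the clean exponent $p/p^*$, and that weak-$*$ compactness in $\mathcal{M}(\mathbb{R}^N)$ (rather than reflexivity of $L^{\Phi_1}$) is what supplies the limit measures $\mu_n,\nu_n$.

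Two places need more care than your sketch suggests. First, your localization inequality is written with the shorthand $\bigl(\int |\varphi|^{\Phi_{1*}}\,d\sigma_n\bigr)^{1/*}\leq S_0\bigl(\int |\varphi|^{\Phi_1}\,d\tau_n\bigr)^{1/\cdot}$, which is not a well-formed statement; the actual inequality one obtains after squeezing with $\xi^0,\xi^1$ is a finite system of four $L^p$-type reverse-H\"older inequalities in the exponents $\ell_1,m_1,\ell_1^*,m_1^*$, and the atomic-structure conclusion has to be extracted from that system case by case. Second, for part (ii) you propose a Brezis--Lieb argument for $\Phi_1(|\nabla u_k|)$ ``using the weak convergence of $\partial u_k/\partial x_i$''; a Brezis--Lieb type splitting requires a.e.\ convergence of the gradients, which at this stage of the argument is not available (it is only established much later in the paper, via Dal Maso--Murat, after Lemma \ref{CCL} has already been used). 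The standard route to part (ii) is different and does not need Brezis--Lieb: one works directly with $\varphi u_k$ rather than $\varphi z_k$ in the Sobolev inequality, discards the $u_k\nabla\varphi$ term via the compact embedding \eqref{0.4}, shrinks $\operatorname{supp}\varphi$ to $\{x_i\}$ to extract the atom $\mu_i$ of $\mu_n$, and then invokes weak lower semicontinuity of the modular (as in \cite{ET}) to get $\mu_n\geq\Phi_1(|\nabla w_n|)$ on the complement of the atoms. Filling in these two points would turn your outline into a complete argument along the lines of \cite{FN}.
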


\begin{lemma}
	The set $\{x_i\}_{i\in I_\lambda}$ in Lemma \ref{CCL} is a finite set.
\end{lemma}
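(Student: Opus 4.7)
The plan is to show that each concentration point $x_i$ carries at least a universal positive amount of the mass $\mu_n$ (and hence of $\nu_n$), so that the boundedness of the total measure $\mu_n(\overline{\Omega})$ (which follows from Corollary \ref{0.33}) forces $I_\lambda$ to be finite. The standard concentration-compactness test-function argument applied to the Palais-Smale sequence $(u_k,v_k)$ will do the job; the adaptation to the Orlicz setting is what needs care.

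Fix $i\in I_\lambda$ and pick a cut-off $\psi_{\varepsilon}\in C_c^\infty(\mathbb{R}^N)$ with $\psi_\varepsilon\equiv 1$ on $B_\varepsilon(x_i)$, $\mathrm{supp}\,\psi_\varepsilon\subset B_{2\varepsilon}(x_i)$, $0\le\psi_\varepsilon\le 1$ and $|\nabla\psi_\varepsilon|\le C/\varepsilon$. Since $(\psi_\varepsilon u_k,0)\in X_n$ with norm bounded independently of $k$ (by Corollary \ref{0.33}), the relation $J_{\lambda,n}'(u_k,v_k)\to 0$ in $X_n^*$ gives
\begin{equation*}
\int_{\Omega}\phi_1(|\nabla u_k|)|\nabla u_k|^2\psi_\varepsilon\,dx
+\int_{\Omega}\phi_1(|\nabla u_k|)u_k\,\nabla u_k\cdot\nabla\psi_\varepsilon\,dx
=\int_{\Omega}H_u(x,u_k,v_k)\psi_\varepsilon u_k\,dx+o_k(1).
\end{equation*}
Using $(\phi_3)$ we bound the first term below by $\ell_1\int_\Omega\Phi_1(|\nabla u_k|)\psi_\varepsilon\,dx$, which by \eqref{2.59} converges as $k\to\infty$ to $\ell_1\int\psi_\varepsilon\,d\mu_n\ge \ell_1\mu_i$ after $\varepsilon\to 0$.

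For the right-hand side, split $H_u=\Phi_{1*}'(u_k)+\lambda R_u(x,u_k,v_k)$. The subcritical remainder is harmless: by the growth bound in $(R_2)$ together with the compact embeddings $W_0^{1,\Phi_1}(\Omega)\hookrightarrow L^P(\Omega),L^B(\Omega)$ (and finite dimensionality of $V_A^n$ handling the $v$-dependence), one has $u_k\to w_n$ strongly in the relevant Orlicz spaces and $\int_\Omega R_u(x,u_k,v_k)\psi_\varepsilon u_k\,dx\to\int_\Omega R_u(x,w_n,y_n)\psi_\varepsilon w_n\,dx$, which vanishes as $\varepsilon\to 0$ (absolute continuity of the integral at the single point $x_i$). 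The critical term $\int \Phi_{1*}'(u_k)u_k\psi_\varepsilon dx$ is controlled by $m_1^*\int\Phi_{1*}(|u_k|)\psi_\varepsilon dx\to m_1^*\int\psi_\varepsilon d\nu_n$, giving $m_1^*\nu_i$ after $\varepsilon\to 0$. The main obstacle is the cross term $\int\phi_1(|\nabla u_k|)u_k\nabla u_k\cdot\nabla\psi_\varepsilon dx$, which I handle via the Hölder-type inequality in Orlicz spaces: bounding it by $2\||\nabla u_k|\phi_1(|\nabla u_k|)\|_{\tilde{\Phi}_1}\|u_k\nabla\psi_\varepsilon\|_{\Phi_1}$, the first factor is uniformly bounded by Corollary \ref{0.33}, while the second factor tends to $0$ as $\varepsilon\to 0$ by Lebesgue's dominated convergence on $L^{\Phi_1}(B_{2\varepsilon}(x_i))$ applied to $w_n\nabla\psi_\varepsilon$ (here we use the strong convergence $u_k\to w_n$ in $L^{\Phi_1}$ from \eqref{0.4} and the fact that $w_n\in L^{\Phi_{1*}}\subset L^{\Phi_1}_{loc}$ has no atoms).

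Combining these limits yields $\ell_1\mu_i\le m_1^* \nu_i$. Inserting the reverse Sobolev-type inequality from Lemma \ref{CCL}(ii), $\nu_i\le \max\{S_0^{\ell_1^*}\mu_i^{\ell_1^*/\ell_1},S_0^{m_1^*}\mu_i^{m_1^*/\ell_1},S_0^{\ell_1^*}\mu_i^{\ell_1^*/m_1},S_0^{m_1^*}\mu_i^{m_1^*/m_1}\}$, produces an inequality of the form $\mu_i\le C\mu_i^{\alpha}$ with some $\alpha>1$ depending on the growth indices, forcing $\mu_i\ge \kappa$ for a universal constant $\kappa>0$ independent of $i$. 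Since $\mu_n\ge\sum_{i\in I_\lambda}\mu_i\delta_{x_i}$ and $\mu_n(\overline{\Omega})<\infty$ by the bound on $\int_\Omega\Phi_1(|\nabla u_k|)dx$ in Corollary \ref{0.33}, we get $\#I_\lambda\cdot\kappa\le\mu_n(\overline{\Omega})<\infty$, so $I_\lambda$ is finite. \qed
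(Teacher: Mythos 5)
Your overall strategy matches the paper's: test $J_{\lambda,n}'(u_k,v_k)$ against $(\varphi_\varepsilon u_k,0)$, pass to the limit in $k$ using the measures $\mu_n,\nu_n$, get $\ell_1\mu_i\le m_1^*\nu_i$, combine with Lemma \ref{CCL}(ii) to force $\mu_i\ge\kappa>0$, and conclude from the finiteness of the total measure. The endgame is correct. But there is a genuine gap in how you dispose of the cross term $\int_\Omega \phi_1(|\nabla u_k|)u_k\nabla u_k\cdot\nabla\varphi_\varepsilon\,dx$. After bounding it by the Orlicz--H\"older product $2\|\phi_1(|\nabla u_k|)|\nabla u_k|\|_{\tilde\Phi_1}\|u_k\nabla\varphi_\varepsilon\|_{\Phi_1}$ and sending $k\to\infty$, you claim $\|w_n\nabla\varphi_\varepsilon\|_{L^{\Phi_1}}\to 0$ by dominated convergence. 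That does not work: $|\nabla\varphi_\varepsilon|\sim 1/\varepsilon$ on the annulus $B_{2\varepsilon}\setminus B_\varepsilon$, so there is no fixed dominating function and ``$x_i$ is not an atom'' only yields $\|w_n\|_{L^{\Phi_1}(B_{2\varepsilon})}\to 0$, which does not beat the $1/\varepsilon$ blow-up. To make this route work one needs a scale-invariant H\"older splitting of the kind $\|w_n\nabla\varphi_\varepsilon\|_{L^{\Phi_1}}\lesssim \|w_n\|_{L^{\Phi_{1*}}(B_{2\varepsilon})}\cdot\|\nabla\varphi_\varepsilon\|$ with the second factor taken in a norm that is invariant under the rescaling $\varphi_\varepsilon(x)=\varphi((x-x_i)/\varepsilon)$ (the Orlicz analogue of the $L^N$ norm of $\nabla\varphi_\varepsilon$ in the $p$-Laplace case) --- and this has to be justified in the Orlicz setting, which your sketch does not do.

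The paper sidesteps this issue by a different device: it extracts the weak-$\ast$ limit $\omega$ of $(\phi_1(|\nabla u_k|)\nabla u_k)$ in $L^{\tilde\Phi_1}$ and the weak limit $\eta_n$ of $(\phi_{1*}(|u_k|)u_k)$, uses $J_{\lambda,n}'(u_k,v_k)(v,0)\to 0$ for all $v\in W^{1,\Phi_1}_0(\Omega)$ to derive the equation $\int\omega\nabla v\,dx=\int\eta_n v\,dx+\lambda\int R_u(x,w_n,y_n)v\,dx$, plugs in $v=w_n\varphi_\varepsilon$, and so rewrites $\int w_n\,\omega\cdot\nabla\varphi_\varepsilon\,dx$ entirely in terms of integrands multiplied by $\varphi_\varepsilon$ (not $\nabla\varphi_\varepsilon$); those go to zero by dominated convergence because $\varphi_\varepsilon\to 0$ a.e.\ and is uniformly bounded. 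You should either adopt that integration-by-parts-through-the-equation trick, or supply the missing scale-invariant Orlicz--H\"older estimate; as written the cross-term step is not justified.
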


\begin{proof}
Let an $x_i$ be fixed. Take $\varphi\in C^{\infty}_0(\mathbb{R}^N)$ such that 
	\begin{align*}
	0\leq \varphi\leq 1,\quad \varphi(x)=1\text{ in } B_1(0) \quad\text{ and }\quad \varphi(x)=0\text{ in } \mathbb{R}^N\backslash B_2(0) 
	\end{align*}
	and put $\varphi_{\varepsilon}(x)=\varphi((x-x_i)/\varepsilon)$ for $\varepsilon>0$. It is clear that $\{\varphi_{\varepsilon}u_k\}_{k\in\mathbb{N}}$ is bounded in $W^{1,\Phi_1}_0(\Omega)$, thus $J_ {\lambda,n}'(u_k,v_k)(\varphi_{\varepsilon}u_k,0)=o_k(1)$, that is,
	
	\begin{align*}
	\begin{split}
	\int_{\Omega}&\phi_1(|\nabla u_k|)\nabla u_k\cdot \nabla (\varphi_{\varepsilon}u_k)dx=
	\int_{\Omega}\phi_{1*}(| u_k|)u_k (\varphi_{\varepsilon}u_k)dx+\lambda\int_{\Omega}R_u(x,u_k,v_k)(\varphi_{\varepsilon}u_k)dx+o_k(1)
	\end{split}
	\end{align*}
Knowing that
\begin{align*}
\int_{\Omega}R_u(x,u_k,v_k)(\varphi_{\varepsilon}u_k)dx\longrightarrow \int_{\Omega}R_u(x,w_n,y_n)(\varphi_{\varepsilon}w_n)dx \;\text{ as } k\rightarrow \infty
\end{align*}
We can conclude that
{\small\begin{align}\label{0.9}
	\begin{split}
	\int_{\Omega}\phi_1(|\nabla u_k|)\nabla u_k\cdot \nabla (\varphi_{\varepsilon}u_k)dx\leq m_1^*
	\int_{\Omega}\Phi_{1*}(| u_k|) \varphi_{\varepsilon}dx+\lambda\int_{\Omega}R_u(x,w_n,y_n)(\varphi_{\varepsilon}w_n)dx+o_k( 1)
	\end{split}
	\end{align} }
By $(\phi_{3})$,	
	\begin{align*}
	\begin{split}
	\int_{\Omega}\phi_1(|\nabla u_k|)\nabla u_k\cdot \nabla (\varphi_{\varepsilon}u_k)dx\geq \ell_1\int_{\Omega}\Phi_1(|\nabla u_k|)\varphi_{\varepsilon}dx+\int_{\Omega}\phi_1(|\nabla u_k|)u_k\nabla u_k\nabla \varphi_{\varepsilon}dx.
	\end{split}
	\end{align*}	
	Therefore,
		{\footnotesize\begin{align}\label{0.7}
		\begin{split}
	\ell_1\int_{\Omega}\Phi_1(|\nabla u_k|)\varphi_{\varepsilon}dx+\int_{\Omega}\phi_1(|\nabla u_k|)u_k\nabla u_k\nabla \varphi_{\varepsilon}dx\leq m_1^*
		\int_{\Omega}\Phi_{1*}(| u_k|) \varphi_{\varepsilon}dx+\lambda\int_{\Omega}R_u(x,w_n,y_n)(\varphi_{\varepsilon}w_n)dx+o_k(1)
		\end{split}
		\end{align}	}
	
Since the sequence $(\phi_1(|\nabla u_k|)\dfrac{\partial u_k}{\partial x_j} )_{k\in\mathbb{N}}$ is limited to $L^{\tilde{\Phi }_1}(\Omega)$, there is $\omega_j\in L^{\tilde{\Phi }_1}(\Omega)$ such that
\begin{align}\label{0.14}
\phi_1(|\nabla u_k|)\dfrac{\partial u_k}{\partial x_j} \xrightharpoonup[\quad]{\ast} \omega_j\text{ in }L^{\tilde{\Phi }_1}(\Omega), \;\;j\in\{1,\cdots, N\}
\end{align}
for some subsequence. Knowing that $$u_k\dfrac{\partial \varphi_{\varepsilon}}{\partial x_j}\longrightarrow w_n\dfrac{\partial \varphi_{\varepsilon}}{\partial x_j} \;\text{ in } L ^{{\Phi _1}}(\Omega), \;\;j\in\{1,\cdots, N\}$$
we conclude that
\begin{align}\label{0.6}
\int_{\Omega}u_k\phi_1(|\nabla u_k|)\dfrac{\partial u_k}{\partial x_j}\dfrac{\partial \varphi_{\varepsilon}}{\partial x_j}dx\longrightarrow \int_ {\Omega}w_n\phi_1(|\nabla w_n|)\dfrac{\partial w_n}{\partial x_j}\dfrac{\partial \varphi_{\varepsilon}}{\partial x_j}dx,\;\;j \in\{1,\cdots, N\}.
\end{align}
Therefore, considering $\omega=(\omega_1,\cdots,\omega_N)$ we get
%	\begin{align*}
%	\Big|\int_{\Omega}\big(u_k^n\phi(|\nabla u_k^n|)\nabla u_k^n\nabla \varphi_{\varepsilon}-u_nw\nabla \varphi_{\varepsilon}\big)dx\Big|&=\Big|\int_{\Omega}\big(u_k^n\phi(|\nabla u_k^n|)\nabla u_k^n-u_nw\big)\nabla \varphi_{\varepsilon}dx\Big|\\
%	&\leq \sum_{j=1}^{N}\Big|\int_{\Omega}\big(u_k^n\phi(|\nabla u_k^n|)\dfrac{\partial u_k^n}{\partial x_j}-u_nw\big)\dfrac{\partial \varphi_{\varepsilon}}{\partial x_j}dx\Big|
%	\end{align*}	
%	donde segue de \eqref{0.6} que	
%	\begin{align*}
%	\Big|\int_{\Omega}\big(u_k^n\phi(|\nabla u_k^n|)\nabla u_k^n\nabla \varphi_{\varepsilon}-u_nw\nabla \varphi_{\varepsilon}\big)dx\Big|\longrightarrow 0,
%	\end{align*}
%	as $k\rightarrow \infty$. 
%	Assim,
	\begin{align}\label{0.8}
	\int_{\Omega}\phi_1(|\nabla u_k|)u_k\nabla u_k\nabla \varphi_{\varepsilon}dx-	\int_{\Omega}u_n\omega\nabla \varphi_{\varepsilon}dx=o_k(1)
	\end{align}
From \eqref{0.7} and \eqref{0.8},
	{\footnotesize\begin{align}\label{0.10}
	\begin{split}
	m_1^*
	\int_{\Omega}\Phi_{1*}(| u_k|) \varphi_{\varepsilon}dx+\lambda\int_{\Omega}R_u(x,u_n,v_n)(\varphi_{\varepsilon}u_n)dx\geq \ell_1\int_{\Omega}\Phi_1(|\nabla u_k|)\varphi_{\varepsilon}dx+\int_{\Omega}u_n\omega\nabla \varphi_{\varepsilon}dx+o_k(1)
	\end{split}
	\end{align}}
By \eqref{2.59}, taking the limit of $k\rightarrow +\infty$, we get
	\begin{align}\label{0.15}
	\begin{split}
m_1^*
	\int_{\Omega} \varphi_{\varepsilon}d\nu_n
	+\lambda\int_{\Omega}R_u(x,u_n,v_n)\varphi_{\varepsilon}u_ndx\geq\ell_1\int_{\Omega}\varphi_{\varepsilon}d\mu_n+\int_{\Omega}u_nw\nabla \varphi_{\varepsilon}dx+o_k(1).
	\end{split}
	\end{align}
	
On the other hand, given $v\in W^{1,\Phi_1}_0(\Omega)$, it follows from \eqref{2.50} that
	\begin{align*}
	o_k(1)=\int_{\Omega}\phi_1(|\nabla u_k|)\nabla u_k \nabla vdx-
	\int_{\Omega}\phi_{1*}(| u_k|)u_k vdx-\lambda\int_{\Omega}R_u(x,u_k,v_k)vdx,
	\end{align*}
	since the sequence $(\phi_{1*}(|u_k|)u_{k})$ is bounded in $L^{\tilde{\Phi }_{1*}}(\Omega)$, there is $\eta_n\in L^{ \tilde{\Phi }_{1*}}(\Omega)$ such that
	\begin{align}\label{0.13}
	\phi_{1*}(|u_{k}|)u_{k} \xrightharpoonup[\quad]{}\eta_n \text{ in } L^{\tilde{\Phi }_{1*}}(\Omega)\;\text{ as } k\rightarrow\infty
	\end{align}
so, from \eqref{0.14} and \eqref{0.13},
	\begin{align*}
	\int_{\Omega}\omega \nabla vdx-
	\int_{\Omega}\eta_n vdx-\lambda\int_{\Omega}R_u(x,u_n,v_n)vdx=0,\;\;\forall v\in W^{1,\Phi_1} _0(\Omega).
	\end{align*}
 In particular, for $v=u_n\varphi_{\varepsilon}$, we have
	\begin{align}\label{0.16}
	\int_{\Omega}u_n \omega\nabla\varphi_{\varepsilon}dx=
	\int_{\Omega}\eta_n u_n\varphi_{\varepsilon}dx+\lambda\int_{\Omega}R_u(x,u_n,v_n)u_n\varphi_{\varepsilon}dx-\int_{\Omega}\varphi_{\varepsilon} \omega\nabla u_n dx,
	\end{align}
Therefore, 
	\begin{align}\label{00.1}
	\lim_{\varepsilon\rightarrow 0}\int_{\Omega}u_n (\omega\nabla\varphi_{\varepsilon})dx=0
	\end{align}
%	Considerando
%	\begin{equation*}
%	\varphi(x)=\left\{\;\begin {aligned}
%	1, & \;\;x=x_i & \\
%	0,& \;\;x \in \Omega-\{x_i\} &
%	\end{aligned}
%	\right.,
%	\end{equation*}
%	temos 
%	\begin{align*}
%	\varphi_{\varepsilon}(x)\longrightarrow\varphi(x)\;\text{ pontualmente }\text{ in }\Omega
%	\end{align*}
%	quando $\varepsilon\rightarrow0$. Além disso,
%	\begin{align*}
%	\varphi_{\varepsilon}(x)\leq \chi_{B_1(x_i)}(x),
%	\end{align*}
%	para $\varepsilon>0$ suficientemente pequeno. Sabendo disto, pelo Teorema de Lebesgue
%	\begin{align}
%	\int_{\Omega}\varphi_{\varepsilon}d\mu_n \longrightarrow \int_{\Omega}\varphi d\mu_n=\mu_n(\{x_i\})=\mu_i
%	\end{align}
%	and
%	\begin{align}\label{0.17}
%	\int_{\Omega}\varphi_{\varepsilon}d\nu_n \longrightarrow \int_{\Omega}\varphi d\nu_n=\nu_n(\{x_i\})=\nu_i,
%	\end{align}
%	quando $\varepsilon\rightarrow0$.
It follows from \eqref{0.15} and \eqref{00.1} that
\begin{align}\label{0.18}
\ell_1\mu_i\leq m_1^{*}\nu_i,\;\;i\in I_\lambda,
\end{align}
and by Lemma \ref{CCL},
	\begin{align}\label{0.35}
	\ell_1\mu_i \leq m_1^*S_0^\beta\mu_i ^{\alpha},
	\end{align}
for some $\alpha$ and $\beta$ verifying
	\begin{align}\label{0.36}
	1<\alpha\in\left\{\dfrac{\ell_1^*}{\ell_1},\dfrac{m_1^*}{\ell_1},\dfrac{\ell_1^*}{m_1},\dfrac{m_1^*}{m_1}\right\}\;\;\text{ and }\;\beta\in \{\ell_1^*,m_1^*\}.
	\end{align}
Thereby,
	\begin{align*}
	0<\dfrac{\ell_1}{m_1^*S_0 ^\beta}\leq \mu_i^{\alpha-1},\;\;i\in I_\lambda,
	\end{align*}
showing that
	\begin{align}\label{0.19}
	\mu_i \geq \big(\dfrac{\ell_1}{m_1^*S_0 ^\beta}\big)^{\frac{1}{\alpha-1}},\;\;i\in I_\lambda.
	\end{align}
By \eqref{0.18} and \eqref{0.19}
	\begin{align*}
	\nu_i \geq \big(\dfrac{\ell_1}{m_1^*}\big)^{\frac{\alpha}{\alpha-1}}S_0 ^{-\frac{\beta}{\alpha-1}},\;\;i\in I_\lambda.
	\end{align*}
	Since $\nu_n$  is a finite measure, the last inequality yields $I_\lambda$ is finite.
	
\end{proof}

In fact we will see that the set $\{x_i\}_{i\in I_\lambda}$ is an empty set for $\lambda>0$ large enough. For this, we will establishes an important estimate from above of the mountain level of
functional $J_{\lambda,n}$.

%
%\begin{lemma}\label{0.34}
%	Let $\Omega_0\subset \Omega$ be an open set satisfying $(R_4)$ and $u_0\in C^{\infty}_0(\Omega)$, such that
%	\begin{enumerate}
%		\item[(i)] $u_0\geq 0$, $u_0\not\equiv0;$
%		\item[(ii)] $supp(u_0)\subset \Omega_0$.
%	\end{enumerate}
%	Then there is $t_0=t_0(u_0)>0$ such that
%	\begin{align*}
%	J_{\lambda,n}(t_0 u_0,0)<0.
%	\end{align*}
%\end{lemma}
%\begin{proof}
%For every $t > 0$, note that for $(R_1)$,
%\begin{align*}
%J_{\lambda,n}(t u_0,0)&=\int_{\Omega} \Phi(t|\nabla u_0|)dx-\int_{\Omega} \Phi_{*}(t|u_0|) dx-\lambda\int_{\Omega}R(x,tu_0,0)dx\\
%&\leq\xi^{1}_{\Phi}(t)\int_{\Omega} \Phi(|\nabla u_0|)dx-\xi^{0}_{\Phi_*}(t)\int_{\Omega} \Phi_{*}(|u_0|)dx-\lambda\int_{\Omega}R(x,tu_0,0)dx\\
%&\leq\xi^{1}_{\Phi}(t)\int_{\Omega} \Phi(|\nabla u_0|)dx-\xi^{0}_{\Phi_*}(t)\int_{\Omega} \Phi_{*}(|u_0|)dx,
%\end{align*}
%where $\xi^1 _{\Phi}(t)=\max\{t^{\ell_1},t^{\ell_2}\}$ and $\xi^0 _{\Phi_*}(t) =\min\{t^{\ell_1^*},t^{\ell_2^*}\}$ from which it follows that
%\begin{align}\label{0.24}
%J_{\lambda,n}(t u_0,0)\leq t^{\ell_2}\int_{\Omega} \Phi(|\nabla u_0|)dx -t^{\ell_1^*}\int_{\Omega} \Phi_{*}(|u_0|)dx,\;\;\;t\geq 1.
%\end{align}
%Since $\ell_2<\ell_1^{*}$, we conclude that
%\begin{align*}
%J_{\lambda,n}(t u_0,0)<0,
%\end{align*}
%for $t$ large enough.
%	
%\end{proof}

\begin{lemma}\label{0.26}
	Let $n\in \mathbb{N}$ be arbitrary and consider $u_0$ given in \eqref{2.55}. Then, there is $\lambda_0>0$ such that if $\lambda>\lambda_0$, we have that
%	\begin{align*}
%	\max_{\mathcal{M}^n _{u_*}}J_{\lambda,n}<\omega,
%	\end{align*}
%	where
%	\begin{align*}
%		\omega:=\left(1-\dfrac{\ell_2}{\mu}\right)\min\left\{\left(\dfrac{\ell_1}{\ell_2^*S_0^{\beta}}\right)^{\frac{1}{\alpha-1}}  : 	\alpha\in\left\{\dfrac{\ell_1^*}{\ell_1},\dfrac{\ell_1^*}{\ell_1},\dfrac{\ell_1^*}{\ell_2},\dfrac{\ell_2^*}{\ell_2}\right\}\;\text{ and }\;\beta\in \{\ell_1^*,\ell_2^*\} \right\}.
%	\end{align*}
%Consequently,
	\begin{align}\label{0.37}
	c_{\lambda,n}<\left(1-\dfrac{m_1}{\mu}\right)\min\left\{\left(\dfrac{\ell_1}{m_1^*S_0^{\beta}}\right)^{\frac{1}{\alpha-1}}  : 	\alpha\in\left\{\dfrac{\ell_1^*}{\ell_1},\dfrac{m_1^*}{\ell_1},\dfrac{\ell_1^*}{m_1},\dfrac{m_1^*}{m_1}\right\}\;\text{ and }\;\beta\in \{\ell_1^*,m_1^*\} \right\},
	\end{align}
	where $	c_{\lambda,n}$ is given in \eqref{2.51}.
%	\begin{align*}
%		c_{\lambda,n}=\inf_{\gamma\in \Gamma}\max_{u\in\mathcal{M}^n _{u_*}} J_{\lambda,n} (\gamma(u)),
%	\end{align*}
%	\begin{align*}
%		\Gamma=\{\gamma\in C(\mathcal{M}^n _{u_*}, X_n) :\;\gamma|_{\partial\mathcal{M}^n _{u_*}}=Id\}.
%	\end{align*}
\end{lemma}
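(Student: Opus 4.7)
The plan is to test the min-max value $c_{\lambda,n}$ against the simplest admissible path, namely $\gamma = \mathrm{Id}_{\mathcal{M}^n_{u_0}} \in \Gamma$, reducing matters to controlling $J_{\lambda,n}$ on $\mathcal{M}^n_{u_0}$, and then to show that this control is forced to $0^+$ as $\lambda \to \infty$ by the mass of $R$ concentrated on $\Omega_0$. Since the right-hand side $K^\ast$ of \eqref{0.37} is a fixed positive constant (independent of $\lambda$), it suffices to produce any quantitative upper bound on $\sup_{\mathcal{M}^n_{u_0}} J_{\lambda,n}$ that vanishes as $\lambda \to \infty$.

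First I would reduce the problem to a single-variable estimate. On $\mathcal{M}^n_{u_0}$, discard the nonnegative contributions $\int_\Omega \Phi_2(|\nabla v|)\,dx \geq 0$ and $\int_\Omega G(v)\,dx \geq 0$ (using $(G_2)$, which forces $G \geq 0$), and retain the concentrated portion of the $R$-integral: by $(R_1)$, $(R_4)$, and $\mathrm{supp}(u_0) \subset \Omega_0$,
\begin{equation*}
\lambda \int_\Omega R(x, \theta u_0, v)\,dx \geq \lambda \omega \theta^{s} \int_\Omega |u_0|^s \,dx =: \lambda \kappa \theta^{s},
\end{equation*}
uniformly in $v$. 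Hence
\begin{equation*}
J_{\lambda,n}(\theta u_0, v) \leq h_\lambda(\theta) := \int_\Omega \Phi_1(|\nabla (\theta u_0)|)\,dx - \int_\Omega \Phi_{1*}(|\theta u_0|)\,dx - \lambda \kappa \theta^{s},
\end{equation*}
independently of $v$ and $n$, so $c_{\lambda,n} \leq \sup_{\theta \geq 0} h_\lambda(\theta)$.

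The next step is to bound $h_\lambda$ by the standard $\xi$-function inequalities. For $0 \leq \theta \leq 1$, $\int_\Omega \Phi_1(|\nabla(\theta u_0)|)\,dx \leq \theta^{\ell_1} \int_\Omega \Phi_1(|\nabla u_0|)\,dx \leq \theta^{\ell_1}$ (by the normalization $\lVert u_0\rVert_{1,\Phi_1} = 1$), so $h_\lambda(\theta) \leq \theta^{\ell_1} - \lambda \kappa \theta^{s}$. Since $s > m_1 \geq \ell_1$ by $(R_4)$ and $(\phi_3)$, an elementary calculus argument gives
\begin{equation*}
\max_{0 \leq \theta \leq 1}\bigl(\theta^{\ell_1} - \lambda \kappa \theta^{s}\bigr) \leq C(\ell_1, s)\,(\lambda \kappa)^{-\ell_1/(s - \ell_1)} \xrightarrow[\lambda \to \infty]{} 0.
\end{equation*}
For $\theta \geq 1$ I would use $\int_\Omega \Phi_1(|\nabla(\theta u_0)|)\,dx \leq \theta^{m_1}$ together with $\int_\Omega \Phi_{1*}(|\theta u_0|)\,dx \geq \theta^{\ell_1^\ast} \int_\Omega \Phi_{1*}(|u_0|)\,dx > 0$, and since $\ell_1^\ast > m_1$ the difference of these two integrals is bounded above by some $M < \infty$ independent of $\lambda$; combined with $-\lambda \kappa \theta^s \leq -\lambda \kappa$, this yields $h_\lambda(\theta) \leq M - \lambda \kappa \to -\infty$. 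Merging both regimes shows $\sup_{\theta \geq 0} h_\lambda(\theta) \to 0^+$, and choosing $\lambda_0$ so that this supremum falls below $K^\ast$ for all $\lambda > \lambda_0$ concludes the argument.

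The main obstacle is keeping the whole estimate uniform across $\mathcal{M}^n_{u_0}$ (in particular in the $v$-variable and in the dimension $n$), which is why all $v$-dependent terms had to be discarded as nonnegative and why the subcritical degree $s$ from $(R_4)$ needed to be compared against the lowest growth exponent $\ell_1$ rather than any higher one. Once this coupling is arranged, the algebraic identity $(\theta^{\ell_1} - \lambda \kappa \theta^s)^{\max} \sim \lambda^{-\ell_1/(s-\ell_1)}$ beats the fixed threshold $K^\ast$ for $\lambda$ large, automatically.
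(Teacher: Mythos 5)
Your proposal is correct and follows the same route the paper takes: test against the identity map in $\Gamma$ so that $c_{\lambda,n}\leq\max_{\mathcal{M}^n_{u_0}}J_{\lambda,n}$, drop the non-positive terms $-\int_\Omega\Phi_2(|\nabla v|)$ and $-\int_\Omega G(v)$, invoke $(R_4)$ on $\Omega_0$ to reduce everything to a scalar function $h_\lambda(\theta)=\mathcal{V}_\lambda(\theta)$ (the paper's notation), and observe that the level is below the $\lambda$-independent threshold $K^\ast$ once $\sup_\theta h_\lambda(\theta)\to 0$. The only cosmetic difference is in the last step: where the paper argues by contradiction along a sequence $\lambda_m\to\infty$ that the maximizer $\theta_{\lambda_m}$ is bounded and then tends to $0$, you instead split $\theta\lessgtr 1$ and extract the explicit rate $\max_{0\leq\theta\leq 1}(\theta^{\ell_1}-\lambda\kappa\theta^{s})\sim\lambda^{-\ell_1/(s-\ell_1)}$ together with $h_\lambda(\theta)\leq M-\lambda\kappa$ for $\theta\geq 1$; this is a marginally more quantitative way to close the same argument and correctly exploits $s>m_1\geq\ell_1$.
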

\begin{proof}By $(R_4)$, given $\theta\geq0$ and $v\in V_A ^n$, we have
\begin{align*}
J_{\lambda,n}(\theta u_0,v)
&\leq\xi^{1}_{\Phi_1}(\theta)\xi^{1}_{\Phi_1}(\lVert u_0\lVert_{{1,\Phi_1}})-\xi^{0}_{\Phi_{1*}}(\theta)\xi^{1}_{\Phi_1}(\lVert u_0\lVert_{{\Phi}_1})-{\lambda}\int_{\Omega_0}R(x,\theta u_0,v)dx\\
&\leq\xi^{1}_{\Phi_1}(\theta)\xi^{1}_{\Phi_1}(\lVert u_0\lVert_{{1,\Phi_1}})-\xi^{0}_{\Phi_{1*}}(\theta)\xi^{1}_{\Phi_1}(\lVert u_0\lVert_{{\Phi_1}})-{\lambda}\omega\int_{\Omega_0}|\theta u_0|^sdx
\end{align*}
This inequality implies that
\begin{align}\label{2.53}
0<b_n\leq \inf_{\gamma\in \Gamma}\max_{u\in\mathcal{M}^n _{u_0}} J_{\lambda,n} (\gamma(u))\leq\max_{\mathcal{M}^n _{u_0}}J_{\lambda,n}\leq\max_{\theta\geq0}\mathcal{V}_{\lambda}(\theta)
\end{align}	
	where 
	\begin{align}\label{2.52}
		\mathcal{V}_{\lambda}(\theta)=\xi^{1}_{\Phi_1}(\theta)\xi^{1}_{\Phi_1}(\lVert u_0\lVert_{{1,\Phi_1}})-\xi^{0}_{\Phi_{1*}}(\theta)\xi^{1}_{\Phi_1}(\lVert u_0\lVert_{{\Phi_1}})-{\lambda}\omega\int_{\Omega_0}|\theta u_0|^sdx.
	\end{align}
In what follows, we denote by $\theta_\lambda > 0$ the real number verifying	
	\begin{align}\label{2.54}
		\max_{\theta\geq0}\mathcal{V}_{\lambda}(\theta)=\mathcal{V}_{\lambda}(\theta_\lambda)
	\end{align}
Let us see that $\mathcal{V}_{\lambda}(\theta_\lambda)\rightarrow0$ as $\lambda\rightarrow\infty$. For that, consider
%	By $(R_4)$ and \eqref{2.50},
%	\begin{align*}
%0<b_n&\leq \inf_{\gamma\in \Gamma}\max_{u\in\mathcal{M}^n _{u_*}} J_{\lambda,n} (\gamma(u))\\
%&\leq\max_{t\geq 0} J_{\lambda_m,n}(t u_0,0)\\
%&=J_{\lambda_m,n}(t_{\lambda_m} u_0,0)\\
%&=\int_{\Omega} \Phi(t_{\lambda_m}|\nabla u_0|)dx-\int_{\Omega} \Phi_{*}(t_{\lambda_m}|u_0|)dx-{\lambda_m}\int_{\Omega}R(x,t_{\lambda_m}u_0,0)dx\\
%&\leq\xi^{1}_{\Phi}(t_{\lambda_m})\int_{\Omega} \Phi(|\nabla u_0|)dx-\xi^{0}_{\Phi_*}(t_{\lambda_m})\int_{\Omega} \Phi_{*}(|u_0|)dx-{\lambda_m}\int_{\Omega}R(x,t_{\lambda_m}u_0,0)dx\\
%&\leq K_1\int_{\Omega} \Phi(|\nabla u_0|)dx-K_2\int_{\Omega} \Phi_{*}(|u_0|)dx-{\lambda_m}\int_{\Omega_0}R(x,t_{\lambda_m}u_0,0)dx
%\end{align*}	
%	
%	Considere a função $$\mathcal{V}_\lambda(\theta)=\xi_{\Phi}^1(\theta)\xi_{\Phi}^1(\lVert u_*\lVert_{{1,\Phi}})-\xi_{\Phi_*}^0(\theta)\xi_{\Phi_*}^0(\lVert u_*\lVert_{{\Phi_*}})$$ 
%	
%	Para mostrar esse Lemma, veremos que $\max_{\theta\geq 0} $
%	Por \eqref{0.24},
%	\begin{align*}
%	J_{\lambda,n}(t u_0,0)\longrightarrow-\infty,\;\;\text{ when }\;t\rightarrow \infty.
%	\end{align*}
%	Considerando 
%	\begin{align}\label{0.25}
%	\max_{t\geq 0} J_{\lambda,n}(t u_0,0)=J_{\lambda,n}(t_\lambda u_0,0),
%	\end{align}
%	obtemos de \eqref{0.23} que $J_{\lambda,n}(t_\lambda u_0,0)>0$. Seja 	
$(\lambda_m)$ a sequence verifying
	\begin{align*}
	\lambda_m\longrightarrow\infty\;\;\text{ as }\;m\rightarrow \infty.
	\end{align*}
	We claim that $(\theta_{\lambda_m})$ is a bounded sequence. Indeed, assuming by contradiction that $(\theta_{\lambda_m})$ is not bounded, we have that for a subsequence, still denote by itself,
	\begin{align*}
	\theta_{\lambda_m}\rightarrow\infty\;\;\text{ as}\;\;m\rightarrow \infty.
	\end{align*}
	Por \eqref{2.52}, \eqref{2.53} and \eqref{2.54},
	\begin{align*}
	0<	\max_{\theta\geq0}\mathcal{V}_{\lambda_m}(\theta)=\mathcal{V}_{\lambda_m}(\theta_{\lambda_m})\leq \theta_{\lambda_m}^{m_1} \xi^{1}_{\Phi_1}(\lVert u_0\lVert_{{1,\Phi_1}})-\theta_{\lambda_m}^{\ell_1^*}\xi^{1}_{\Phi_1}(\lVert u_0\lVert_{{\Phi_1}})\rightarrow-\infty\;\;\text{ as }\;m\rightarrow \infty,
	\end{align*}
	which is an absurd, and so, $(\theta_{\lambda_m})$ is bounded. We claim that $\theta_{\lambda_m}\rightarrow0$ as $m\rightarrow \infty$. If the above limit does not hold, we can assume by contradiction, that for some subsequence, still denote by $(\theta_{\lambda_m})$, there is $k_0 > 0$ such that
	\begin{align*}
		\theta_{\lambda_m}>k_0>0,\;\;\forall n\in\mathbb{N}
	\end{align*}
Then, by \eqref{2.52} and \eqref{2.54},
	\begin{align*}
	0<	\max_{\theta\geq0}\mathcal{V}_{\lambda_m}(\theta)=\mathcal{V}_{\lambda_m}(\theta_{\lambda_m})\leq\xi^{1}_{\Phi_1}(\theta_{\lambda_m})\xi^{1}_{\Phi_1}(\lVert u_0\lVert_{{1,\Phi_1}})-\xi^{0}_{\Phi_{1*}}(\theta_{\lambda_m})\xi^{1}_{\Phi_1}(\lVert u_0\lVert_{{\Phi_1}})-{\lambda_m}\omega\int_{\Omega_0}|\theta_{\lambda_m} u_0|^sdx,
	\end{align*}
thus
	\begin{align*}
	0<\max_{\theta\geq0}\mathcal{V}_{\lambda_m}(\theta)\rightarrow-\infty\;\;\text{ as }\;\;m\rightarrow \infty,
	\end{align*}
which is a contradiction. Hence, 
	\begin{align*}
	t_{\lambda_m}\rightarrow0\;\;\text{ as}\;\;m\rightarrow \infty,
	\end{align*}
which leads to
	\begin{align*}
	\max_{\theta\geq0}\mathcal{V}_{\lambda_m}(\theta)=\mathcal{V}_{\lambda_m}(\theta_{\lambda_m})\rightarrow0\;\;\text{ as }\;\;m\rightarrow \infty,
	\end{align*}
and by \eqref{2.53} it follows that
	\begin{align*}
	c_{\lambda_m,n}\rightarrow0\;\;\text{ as }  \;\;m\rightarrow\infty.
	\end{align*}
	
\end{proof}

%Pelo Lemma \ref{0.26}, podemos considerar $\lambda_0>0$ de modo que 
%\textcolor{red}{\begin{align}\label{0.37}
%	c_{\lambda,n}<\left(1-\dfrac{\ell_2}{\mu}\right)\left(\dfrac{\ell_1}{\ell_2^*\max\{S_0^{\ell_1^*},S_0^{\ell_2^*}\}}\right)^{1-\alpha},\;\;\forall \lambda>\lambda_0\;\text{ and }\;\forall n\in\mathbb{N}
%	\end{align}}
%\noindent onde $\alpha$ é dado em \eqref{0.35} and \eqref{0.36}.

\begin{lemma}\label{lema1}
	For every $n\in\mathbb{N}$ and $\lambda>\lambda_0$ the set $I_\lambda$ is empty, where $\lambda_0$ was given in Lemma \ref{0.26}.
\end{lemma}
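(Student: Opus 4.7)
The plan is to argue by contradiction. Suppose $I_\lambda\neq \emptyset$ and fix $i\in I_\lambda$. From \eqref{2.50} and the boundedness of $(u_k,v_k)$ in $X_n$ established in the previous lemma, I have the identity
\begin{align*}
c_{\lambda,n}+o_k(1) = J_{\lambda,n}(u_k,v_k) - J_{\lambda,n}'(u_k,v_k)\!\left(\tfrac{1}{\mu}u_k,\tfrac{1}{\nu}v_k\right),
\end{align*}
where $\mu\in(m_1,\ell_1^{*})$ comes from $(R_3)$ and $\nu\in(0,\ell_1)$ from $(G_2)$. The goal is to localize the right-hand side at the concentration point $x_i$ and contrast the resulting lower bound with the strict upper estimate provided by Lemma \ref{0.26}.

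Expanding this identity and invoking $(\phi_3)$ (which gives $\phi_1(t)t^2\leq m_1\Phi_1(t)$, $\phi_{1*}(t)t^2\geq \ell_1^{*}\Phi_{1*}(t)$, and the analogous estimate for $\Phi_2$), together with $(G_2)$ (so that $\tfrac{1}{\nu}g(s)s-G(s)\geq 0$) and $(R_3)$ (so that $\tfrac{1}{\mu}R_u u+\tfrac{1}{\nu}R_v v-R\geq 0$), every contribution besides the $\Phi_1$ term on the right-hand side is nonnegative. Dropping these nonnegative pieces yields
\begin{align*}
c_{\lambda,n}+o_k(1)\geq \left(1-\tfrac{m_1}{\mu}\right)\int_{\Omega}\Phi_1(|\nabla u_k|)\,dx,
\end{align*}
and $1-m_1/\mu>0$ because $\mu>m_1$.

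To extract the mass concentrated at $x_i$, I use a cut-off $\varphi_\varepsilon\in C_c^{\infty}(\mathbb{R}^N)$ with $0\leq \varphi_\varepsilon\leq 1$, $\varphi_\varepsilon\equiv 1$ on $B_\varepsilon(x_i)$, and $\operatorname{supp}\varphi_\varepsilon\subset B_{2\varepsilon}(x_i)$, choosing $\varepsilon>0$ so small that $B_{2\varepsilon}(x_i)$ isolates $x_i$ from the other points of $\{x_j\}_{j\in I_\lambda}$. Since $\varphi_\varepsilon\leq 1$,
\begin{align*}
\int_{\Omega}\Phi_1(|\nabla u_k|)\,dx\geq \int_{\mathbb{R}^N}\Phi_1(|\nabla u_k|)\varphi_\varepsilon\,dx.
\end{align*}
Letting $k\to \infty$ via the weak-$\ast$ convergence of $\Phi_1(|\nabla u_k|)$ to $\mu_n$ recorded in \eqref{2.59}, and then applying Lemma \ref{CCL}$(ii)$ to obtain $\mu_n\geq \mu_i\delta_{x_i}$, I arrive at
\begin{align*}
c_{\lambda,n}\geq \left(1-\tfrac{m_1}{\mu}\right)\int_{\mathbb{R}^N}\varphi_\varepsilon\,d\mu_n \geq \left(1-\tfrac{m_1}{\mu}\right)\mu_i.
\end{align*}

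Combining this with the quantitative lower bound \eqref{0.19} on $\mu_i$ (valid for some admissible $\alpha,\beta$ as in \eqref{0.36}) gives
\begin{align*}
c_{\lambda,n}\geq \left(1-\tfrac{m_1}{\mu}\right)\left(\tfrac{\ell_1}{m_1^{*}S_0^{\beta}}\right)^{1/(\alpha-1)},
\end{align*}
so $c_{\lambda,n}$ is bounded below by the minimum appearing on the right-hand side of \eqref{0.37}. For $\lambda>\lambda_0$ this contradicts the strict upper bound of Lemma \ref{0.26}, forcing $I_\lambda=\emptyset$. The main obstacle is verifying that every term discarded in the passage from the two-variable identity to the single $\Phi_1$-lower bound is genuinely nonnegative; this is precisely what the coupling of the Ambrosetti--Rabinowitz-type conditions $(G_2)$ and $(R_3)$ with the exponent orderings $m_1<\mu<\ell_1^{*}$ and $\nu<\ell_1\leq \ell_2$ is designed to ensure.
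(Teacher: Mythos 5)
Your proof is correct and follows essentially the same route as the paper's: the same Ambrosetti--Rabinowitz-type inequality $c_{\lambda,n}+o_k(1)\geq\bigl(1-\tfrac{m_1}{\mu}\bigr)\int_{\Omega}\Phi_1(|\nabla u_k|)dx$ obtained from $(G_2)$, $(R_3)$, $(\phi_3)$, the passage to the Radon-measure limit $\mu_n$, the lower bound $\mu_i\geq\bigl(\tfrac{\ell_1}{m_1^*S_0^\beta}\bigr)^{1/(\alpha-1)}$ from \eqref{0.19}, and the contradiction with Lemma~\ref{0.26}. The only difference is cosmetic: you localize with a small bump $\varphi_\varepsilon$ around $x_i$, while the paper takes $\varphi\equiv 1$ on $\overline\Omega$ and bounds $\int\varphi\,d\mu_n\geq\mu_n(\overline\Omega)\geq\mu_i$ directly; both yield the same inequality $c_{\lambda,n}\geq\bigl(1-\tfrac{m_1}{\mu}\bigr)\mu_i$.
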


\begin{proof}
	Let $(u_k,u_k)\subset X_n$ the $(PS)_{c_{\lambda,n}}$ sequence obtained in \eqref{2.50}. By the assumptions $(G_2)$, $(R_3)$, $(\phi_3)$, we have that
	\begin{align*}
	c_{\lambda,n}+o_k(1)\geq J_n(u_k,v_k)- J_{\lambda,n}'(u_k,v_k) (\dfrac{1}{\mu}u_k,\dfrac{1}{\nu}v_k)
	\geq\left(1-\dfrac{m_1}{\mu}\right)\int_{\Omega}\Phi_1(|\nabla u_k|)dx.
	\end{align*}

Fixing a function $\varphi\in C^{\infty}_0(\mathbb{R}^N)$ with $\varphi(x)=1$ on $\overline{\Omega}$, we derive that
	\begin{align*}
	c_{\lambda,n}+o_k(1)\geq\left(1-\dfrac{m_1}{\mu}\right)\int_{\mathbb{R}^N}\Phi_1(|\nabla u_k|)\varphi dx.
	\end{align*}
	Taking the limit of $k\rightarrow +\infty$, we get
	\begin{align*}
	c_{\lambda,n}\geq\left(1-\dfrac{m_1}{\mu}\right)\int_{\mathbb{R}^N}\varphi d\mu_n\geq\left(1-\dfrac{m_1}{\mu}\right)\mu_n(\overline{\Omega}).
	\end{align*}
	Supposing that $I_\lambda$ is not empty, there is $i\in I_\lambda$, and so,
		\begin{align*}
	c_{\lambda,n}\geq\left(1-\dfrac{m_1}{\mu}\right)\mu_i.
	\end{align*}	
In $\eqref{0.19}$ we show that
	\begin{align*}
	\mu_i\geq\big(\dfrac{\ell_1}{m_1^*S_0 ^\beta}\big)^{\frac{1}{\alpha-1}},\;i\in I_\lambda
	\end{align*}
where $\alpha$ is given in \eqref{0.35} and \eqref{0.36}. Therefore, we can conclude that
	\begin{align*}
	c_{\lambda,n}\geq \left(1-\dfrac{m_1}{\mu}\right)\min\left\{\left(\dfrac{\ell_1}{m_1^*S_0^{\beta}}\right)^{\frac{1}{\alpha-1}}  : 	\alpha\in\left\{\dfrac{\ell_1^*}{\ell_1},\dfrac{m_1^*}{\ell_1},\dfrac{\ell_1^*}{m_1},\dfrac{m_1^*}{m_1}\right\}\;\text{ and }\;\beta\in \{\ell_1^*,m_1^*\} \right\}.
	\end{align*}
	Then, if $\lambda \geq \lambda_0$, the last inequality together with Lemma \ref{0.26} yields $I_\lambda=\emptyset$ is empty.
	
\end{proof}

\begin{lemma}
 For $\lambda \geq  \lambda_0$, the sequence $(u_k)$ is strongly convergent for its weak limit $w_n$ in $ L^{\Phi_{1*}}(\Omega)$ as $k\rightarrow \infty$.
\end{lemma}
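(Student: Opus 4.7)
The strategy is to combine the fact that $I_\lambda=\emptyset$ (just established in Lemma \ref{lema1}) with the a.e.\ convergence in \eqref{0.3} and a Brezis--Lieb type argument in the Orlicz setting to obtain modular convergence of $u_k$ to $w_n$, and then upgrade this to Luxemburg-norm convergence using the $\Delta_2$-regularity of $\Phi_{1*}$.

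First I would convert the measure-theoretic information into convergence of total modular masses. Since $\lambda \geq \lambda_0$, Lemma \ref{lema1} gives $I_\lambda=\emptyset$, so Lemma \ref{CCL}(i) collapses to $\nu_n=\Phi_{1*}(|w_n|)\,dx$ as measures supported in $\overline{\Omega}$ (viewing $u_k$ and $w_n$ as extended by zero outside $\Omega$). Choosing $\varphi \in C^{\infty}_c(\mathbb{R}^N)$ with $\varphi \equiv 1$ on $\overline{\Omega}$ and applying the weak-$*$ convergence \eqref{2.59} to $\varphi$, we obtain
$$\int_{\Omega}\Phi_{1*}(|u_k|)\,dx=\int_{\mathbb{R}^N}\varphi\,\Phi_{1*}(|u_k|)\,dx \longrightarrow \int_{\mathbb{R}^N}\varphi\,d\nu_n=\int_{\Omega}\Phi_{1*}(|w_n|)\,dx.$$
The key subtlety here is that $\nu_n$ could a priori charge $\partial\Omega$; using an extension by zero and a test function $\varphi$ equal to $1$ on $\overline{\Omega}$ disposes of this worry.

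Next I would apply a Brezis--Lieb-type lemma in the Orlicz setting. Hypothesis $(\phi_3)$ gives $m_1<N$, whence $m_1^*<\infty$ and $\Phi_{1*}\in(\Delta_2)$. Corollary \ref{0.33} says $\{u_k\}$ is bounded in $L^{\Phi_{1*}}(\Omega)$, and by \eqref{0.3} we have $u_k\to w_n$ a.e.\ in $\Omega$. Under $\Phi_{1*}\in(\Delta_2)$, the classical Brezis--Lieb argument adapts directly (the proof uses dominated convergence with the dominator afforded by $\Phi_{1*}(a+b)\leq C(\Phi_{1*}(a)+\Phi_{1*}(b))$), yielding
$$\int_{\Omega}\Phi_{1*}(|u_k|)\,dx-\int_{\Omega}\Phi_{1*}(|u_k-w_n|)\,dx \longrightarrow \int_{\Omega}\Phi_{1*}(|w_n|)\,dx.$$
Combining this with the previous display forces $\int_{\Omega}\Phi_{1*}(|u_k-w_n|)\,dx \to 0$, i.e.\ modular convergence of $u_k$ to $w_n$ with respect to $\Phi_{1*}$.

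Finally, because $\Phi_{1*}\in(\Delta_2)$, modular convergence is equivalent to Luxemburg-norm convergence, so $\lVert u_k-w_n\lVert_{\Phi_{1*}} \to 0$ as $k\to\infty$, as desired. The only step I expect to require genuine care is the first one: making sure that, upon passing to the weak-$*$ limit, the full mass $\int_{\Omega}\Phi_{1*}(|u_k|)\,dx$ is captured by $\nu_n$, with no loss to the boundary or to points outside $\Omega$. Once $\nu_n=\Phi_{1*}(|w_n|)\,dx$ is in hand and the Orlicz Brezis--Lieb identity is invoked, the rest is routine.
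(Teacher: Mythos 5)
Your argument follows exactly the paper's proof: use $I_\lambda=\emptyset$ (Lemma \ref{lema1}) with Lemma \ref{CCL}(i) and the weak-$*$ convergence in \eqref{2.59} to get $\int_\Omega\Phi_{1*}(|u_k|)\,dx\to\int_\Omega\Phi_{1*}(|w_n|)\,dx$, then apply the Brezis--Lieb lemma to obtain modular convergence, and upgrade to Luxemburg-norm convergence via $\Phi_{1*}\in(\Delta_2)$ (guaranteed by $m_1<N$). Your extra remarks about the test function capturing any possible boundary mass and about the modular-to-norm upgrade are sound and simply make explicit steps the paper leaves implicit.
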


\begin{proof}Let $\varphi\in C^{\infty}(\mathbb{R}^N)$ be a function verifying  $\varphi(x)=1,$ for $x\in \Omega$. In this case,
	\begin{align*}
	\begin{split}
	\lim_{k\rightarrow \infty}\int_{\Omega}\Phi_{1*}(u_k)dx= 	\lim_{k\rightarrow \infty}\int_{\mathbb{R}^N}\Phi_{1*}(u_k)\varphi dx= \int_{\mathbb{R}^N}\varphi d\nu_n
	\end{split}
	\end{align*}
	The Lemma \ref{CCL}(i) combined with Lemma \ref{lema1} gives
		\begin{align*}
	\begin{split}
	\lim_{k\rightarrow \infty}\int_{\Omega}\Phi_{1*}(u_k)dx =\int_{\mathbb{R}^N}\Phi_{1*}( u_n)\varphi dx
	&=\int_{\Omega}\Phi_{1*}( u_n) dx.
	\end{split}
	\end{align*}
	Since $\Phi_{1*}$ is a convex function, it follows from a result found in \cite{brezis} that
	\begin{align*}
	\lim_{k\rightarrow \infty}\int_{\Omega}\{\Phi_{1*}(| u_k|)-\Phi_{1*}(| u_k-u_n|)-\Phi_{1*}(|u_n|)\}dx=0.
	\end{align*}
	Then,
	\begin{align*}
	\lim_{k\rightarrow \infty}\int_{\Omega}\Phi_{1*}(| u_k-u_n|)dx=0,
	\end{align*}
we can conclude that $(u_k)$  converges strongly for $u_n$ in $L^{\Phi_{1*}}(\Omega)$.
	
\end{proof}

%

%De um certo modo, acabamos de mostrar através desse lemma que existe uma certa "compacidade" entre os espaços $W^{1,\Phi}_{0}(\Omega)$ and $L^{\Phi_{*}}(\Omega)$.
%

\begin{lemma}\label{0.30}
Consider $\lambda>\lambda_0$ and $(u_k)\subset W^{1,\Phi_1}_0(\Omega)$ the sequence obtained in \eqref{2.50}. Then, for some subsequence, still denoted by itself, 
\begin{align*}
u_k\rightarrow w_n\;\text{ in }W^{1,\Phi_1}_0(\Omega)\;\text{ as }\; k\rightarrow \infty.
\end{align*}
\end{lemma}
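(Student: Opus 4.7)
My plan is to combine the Palais-Smale property at level $c_{\lambda,n}$ with the fact that $I_\lambda = \emptyset$ and the strong convergence $u_k \to w_n$ in $L^{\Phi_{1*}}(\Omega)$ just established, using the classical monotonicity trick adapted to the Orlicz-Sobolev framework. First I would test $J_{\lambda,n}'(u_k,v_k) \to 0$ against the bounded sequence $(u_k - w_n, 0) \in X_n$ to get
\begin{align*}
\int_\Omega \phi_1(|\nabla u_k|)\nabla u_k \cdot \nabla(u_k - w_n)\,dx = \int_\Omega H_u(x,u_k,v_k)(u_k - w_n)\,dx + o_k(1).
\end{align*}
Since $H_u = \phi_{1*}(|u|)u + \lambda R_u$, the right-hand side is handled by combining the strong convergence $u_k \to w_n$ in $L^{\Phi_{1*}}(\Omega)$ and in $L^{\Phi_1}(\Omega)$, together with the subcritical growth \eqref{R.2} of $R_u$ and the boundedness of $v_k$ in the finite-dimensional subspace $V_A^n$. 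These ingredients show that this right-hand side is $o_k(1)$.

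Next, since $\phi_1(|\nabla w_n|)\nabla w_n \in (L^{\tilde{\Phi}_1}(\Omega))^N$ and $\nabla u_k$ converges to $\nabla w_n$ in the weak-$*$ sense of $(L^{\Phi_1}(\Omega))^N$, I would also have
\begin{align*}
\int_\Omega \phi_1(|\nabla w_n|)\nabla w_n \cdot \nabla(u_k - w_n)\,dx = o_k(1).
\end{align*}
Subtracting these two identities yields the central relation
\begin{align*}
\int_\Omega \bigl[\phi_1(|\nabla u_k|)\nabla u_k - \phi_1(|\nabla w_n|)\nabla w_n\bigr] \cdot (\nabla u_k - \nabla w_n)\,dx = o_k(1),
\end{align*}
in which the integrand is nonnegative almost everywhere by the strict monotonicity of $t \mapsto t\phi_1(t)$ encoded in $(\phi_1)$. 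Consequently this integrand tends to $0$ in $L^1(\Omega)$ and, up to passing to a subsequence, converges to $0$ almost everywhere in $\Omega$.

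From this I would extract, through a standard Dal Maso-Murat type argument exploiting the strict monotonicity and coercivity provided by $(\phi_1)$-$(\phi_2)$, the pointwise convergence $\nabla u_k(x) \to \nabla w_n(x)$ almost everywhere in $\Omega$. The final step is to upgrade a.e. gradient convergence to convergence in the Luxemburg norm of $W^{1,\Phi_1}_0(\Omega)$. Because $\Phi_1$ itself satisfies the $\Delta_2$-condition (as $m_1 < \infty$ by $(\phi_3)$), norm convergence is equivalent to modular convergence, so it suffices to show $\int_\Omega \Phi_1(|\nabla u_k - \nabla w_n|)\,dx \to 0$; this I would obtain by a Vitali-type equi-integrability argument based on the uniform bound on $\{\Phi_1(|\nabla u_k|)\}$ in $L^1(\Omega)$ and the $\Delta_2$-property of $\Phi_1$. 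I expect the main technical obstacle to be the rigorous passage from the vanishing of the monotonicity integral to a.e. convergence of $\nabla u_k$, since the usual Minty-Browder device relies on reflexivity and must here be replaced by a careful pointwise extraction combined with the monotone structure of $\phi_1$.
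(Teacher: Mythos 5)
Your overall strategy---reduce to the vanishing of the monotonicity integral, then extract a.e.\ gradient convergence (Dal Maso--Murat) and finally upgrade to modular convergence using $\Phi_1\in(\Delta_2)$---is the same skeleton as the paper's proof, and the first identity obtained by testing $J_{\lambda,n}'(u_k,v_k)$ against $(u_k-w_n,0)$ is essentially the paper's \eqref{0.29}. However, your second identity,
\begin{align*}
\int_\Omega \phi_1(|\nabla w_n|)\nabla w_n\cdot\nabla(u_k-w_n)\,dx=o_k(1),
\end{align*}
is \emph{not} a consequence of the weak-$*$ convergence $\nabla u_k \xrightharpoonup{\ast}\nabla w_n$ in $(L^{\Phi_1}(\Omega))^N$. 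That convergence is against the predual $E^{\tilde\Phi_1}(\Omega)$, not against all of $L^{\tilde\Phi_1}(\Omega)$. In Section~3 the whole point is that $\tilde\Phi_1$ may fail $(\Delta_2)$, so $E^{\tilde\Phi_1}(\Omega)\subsetneq L^{\tilde\Phi_1}(\Omega)$; from the standard bound $\tilde\Phi_1(\phi_1(t)t)\le\Phi_1(2t)$ and $\Phi_1\in(\Delta_2)$ one only gets $\phi_1(|\nabla w_n|)\nabla w_n\in (L^{\tilde\Phi_1}(\Omega))^N$, not membership in $E^{\tilde\Phi_1}$. So this step is a genuine gap. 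The paper sidesteps it: using the convexity of $\Phi_1$ and the lower semicontinuity result of Ekeland--Temam it first derives a variational inequality for the limit $w_n$, deduces from it the Euler--Lagrange equation $\int_\Omega\phi_1(|\nabla w_n|)\nabla w_n\nabla\varphi\,dx=\int_\Omega\phi_{1*}(|w_n|)w_n\varphi\,dx+\lambda\int_\Omega R_u(x,w_n,y_n)\varphi\,dx$, and only then tests this against $\varphi=w_n-u_k$, using the strong subcritical convergences $u_k\to w_n$ in $L^{\Phi_{1*}}(\Omega)$ and $L^{\Phi_1}(\Omega)$ to obtain \eqref{0.28}.

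A secondary weakness lies in your final step: a uniform bound on $\int_\Omega\Phi_1(|\nabla u_k|)\,dx$ does not by itself give the equi-integrability needed for a Vitali argument. The paper instead shows (via the weak-$*$ convergence of $\phi_1(|\nabla u_k|)\nabla u_k$ in $L^{\tilde\Phi_1}$ tested against $\nabla w_n\in L^{\Phi_1}=E^{\Phi_1}$, which is legitimate because $\Phi_1\in(\Delta_2)$) that $\int_\Omega\phi_1(|\nabla u_k|)|\nabla u_k|^2\,dx\to\int_\Omega\phi_1(|\nabla w_n|)|\nabla w_n|^2\,dx$; together with a.e.\ convergence and nonnegativity this gives $L^1$-convergence of $\phi_1(|\nabla u_k|)|\nabla u_k|^2$, from which modular (hence norm) convergence in $W^{1,\Phi_1}_0(\Omega)$ follows using $(\phi_3)$ and $(\Delta_2)$. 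You should adopt this route rather than appeal to equi-integrability.
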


\begin{proof}
	Since $(u_k)$ is a bounded sequence in $W^{1,\Phi_1}_0(\Omega)$, then
	{\small$$o_k(1)=\int_{\Omega}\phi_1(|\nabla u_k|)\nabla u_k \nabla (v-u_k)dx-
		\int_{\Omega}\phi_{1*}(| u_k|)u_k (v-u_k)dx-\lambda\int_{\Omega}R_u(x,u_k,v_k)(v-u_k)dx.$$}
	Given $v\in W^{1,\Phi_1}_0(\Omega)$, by the convexity of $\Phi_1$ it follows that
	\begin{align*}
	\Phi_1(|\nabla v|)-\Phi_1(|\nabla u_k|)\geq \phi_1(|\nabla u_k|)\nabla u_k\nabla (v-u_k),
	\end{align*}
	thus,
	{\footnotesize\begin{align}\label{0.27}
		\begin{split}
		\int_{\Omega}\Phi_1(|\nabla v|)dx-\int_{\Omega}\Phi_1(|\nabla u_k|)dx\geq
		\int_{\Omega}\phi_{1*}(| u_k|)u_k (v-u_k)dx-\lambda\int_{\Omega}R_u(x,u_k,v_k)(v-u_k)dx+ o_k(1).
		\end{split}
		\end{align}}
	Through the sequence $(u_k)$ in $W^{1,\Phi_1}_0(\Omega)$ together with the limits
	$$ u_k(x)\longrightarrow w_n\;\;\;\text{ a.e.\;in }\Omega \quad\text{ and }\quad\dfrac{\partial u_k}{\partial x_i}\xrightharpoonup[\quad]{} \dfrac{\partial w_n}{\partial x_i}\;\;\;\text{ in }L^1 (\Omega),$$
	we can apply [\citenum{ET}, Theorem $2.1$, Chapter 8] to get
	\begin{align*}
	\liminf_{k\rightarrow\infty}\int_{\Omega}\Phi_1(|\nabla u_k|)dx\geq \int_{\Omega}\Phi_1(|\nabla w_n|)dx,
	\end{align*}
	Furthermore, since $(u_k)$ strongly converges to $u_n$ in $L^{\Phi_{1*}}(\Omega)$,
	\begin{align*}
	\int_{\Omega}\phi_{1*}(| u_k|)u_k (v-u_k)dx\rightarrow \int_{\Omega}\phi_{1*}(| w_n|)w_n (v-w_n)dx,\;\text{ as }\; k\rightarrow \infty.
	\end{align*}

	Therefore, it follows from \eqref{0.27} that
	{\begin{align*}
		\int_{\Omega}\Phi_1(|\nabla  v|)dx-\int_{\Omega}\Phi_1(|\nabla w_n|)dx
		\geq
		\int_{\Omega}\phi_{1*}(| w_n|)w_n (v-w_n)dx+\lambda\int_{\Omega}R_u(x,w_n,y_n)(v-w_n)dx.
		\end{align*}}
	By arbitrariness $v$ we can conclude that
	\begin{align*}
	\int_{\Omega}\phi_1(|\nabla w_n|)\nabla w_n \nabla(w_n-u_k)dx=
	\int_{\Omega}\phi_{1*}(| w_n|)w_n (w_n-u_k)dx+\lambda\int_{\Omega}R_u(x,w_n,y_n)(w_n-u_k)dx,
	\end{align*}
	implying that
	\begin{align}\label{0.28}
	\int_{\Omega}\phi_1(|\nabla w_n|)\nabla w_n \nabla(w_n-u_k)dx=o_k(1).
	\end{align}
	
On the other hand, since $(u_k,v_k)$ is a sequence $(PS)_{c_{\lambda,n}}$,
	\begin{align*}
		o_k(1)=J_{\lambda,n}'(u_k,v_k)(w_n-u_k,0)
		=&\int_{\Omega}\phi_1(|\nabla u_k|)\nabla u_k \nabla(w_n-u_k)dx-
		\int_{\Omega}\phi_{1*}(| u_k|)u_k (w_n-u_k)dx\\&-\lambda\int_{\Omega}R_u(x,u_k,v_k)(w_n-u_k)dx,
		\end{align*}  
	Therefore,
	\begin{align}\label{0.29}
	\int_{\Omega}\phi_1(|\nabla u_k|)\nabla u_k \nabla(w_n-u_k)dx=o_k(1).
	\end{align}
From \eqref{0.28} and \eqref{0.29},
	\begin{align*}
	\int_{\Omega}\big(\phi_1(|\nabla u_k|)\nabla u_k-\phi_1(|\nabla w_n|)\nabla w_n\big)\big(\nabla u_k-\nabla w_n\big)dx\longrightarrow 0\;\text{ as }k\rightarrow \infty.
	\end{align*}
	Now,  applying a result due to Dal Maso and Murat \cite{Dal}, it follows that
	\begin{align}\label{111}
	\nabla u_k(x)\longrightarrow \nabla w_n(x)\;\text{ a.e.\;in }\Omega\;\text{ as }k\rightarrow \infty.
	\end{align}
%\end{proof}
%
%
%
%\begin{lemma}\label{0.30}
%Let $(u_k)\subset W^{1,\Phi_1}_0(\Omega)$ the sequence obtained in \eqref{2.50}. Then, for
%some subsequence, still denoted by itself, 
%	\begin{align*}
%	u_k\longrightarrow w_n\;\text{ in }W^{1,\Phi_1}_0(\Omega)\;\text{ as }\; k\rightarrow \infty.
%	\end{align*}
%\end{lemma}
%
%\begin{proof}
Since $(u_k)$ is bounded in $W^{1,\Phi_1}_0(\Omega)$ and $\Phi_1\in (\Delta_{2})$, then the sequence $(\phi_1(|\nabla u_k|)\dfrac{\partial u_k}{\partial x_j} )_{k\in\mathbb{N}}$ is bounded by $L^{\tilde{\Phi }_1}(\Omega)$, for each $j\in\{1,\cdots, N\}$. Furthermore, by \eqref{111}, it follows that
 \begin{align*}
 \phi_1(|\nabla u_k(x)|)\dfrac{\partial u_k(x)}{\partial x_j} \rightarrow \phi_1(|\nabla w_n(x)|)\dfrac{\partial w_n(x)}{\partial x_j}\;\text{ a.e.\;in }\Omega\;\text{ as }\; k\rightarrow \infty.
 \end{align*}
Thus, by Lemma 2.5 in \cite{AlvesandLeandro},
\begin{align}\label{0.31}
	\int_{\Omega}\phi_1(|\nabla u_k|)\nabla u_k \nabla vdx\rightarrow\int_{\Omega}\phi_1(|\nabla u_n|)\nabla u_n \nabla vdx,\;\;v\in C^{\infty}_0(\Omega) \;\text{ as }\; k\rightarrow \infty.
\end{align} 
Still due to the boundedness of the sequence $(\phi_1(|\nabla u_k|)\dfrac{\partial u_k}{\partial x_j} )_{k\in\mathbb{N}}$ in $L^{\tilde{\Phi }_1}(\Omega)$, there will be $v_j\in L^{\tilde{\Phi }_1}(\Omega)$ such that
\begin{align*}
	\phi_1(|\nabla u_k|)\dfrac{\partial u_k}{\partial x_j}\xrightharpoonup[\quad]{\ast} v_j \;\text{ in } L^{\tilde{\Phi }_1}(\Omega) \;\text{ as }\; k\rightarrow \infty.
\end{align*}
i.e,
\begin{align}\label{0.32}
\int_{\Omega}\phi_1(|\nabla u_k|)\dfrac{\partial u_k}{\partial x_j}wdx\rightarrow \int_{\Omega}v_jwdx, \;\forall w \in E^{{\Phi _1}}(\Omega)=L^{{\Phi _1}}(\Omega) \;\text{ as }\; k\rightarrow \infty.
\end{align} 
By \eqref{0.31} and \eqref{0.32}, it follows that $v_j=\phi_1(|\nabla u_n|)\dfrac{\partial u_n}{\partial x_j}$, for each $j\in\{1 ,\cdots, N\}$. Still from \eqref{0.32},
\begin{align*}
\int_{\Omega}\phi_1(|\nabla u_k|){\nabla u_k}\nabla wdx\rightarrow \int_{\Omega}\phi_1(|\nabla u_n|){\nabla u_n}\nabla wdx, \;\;\forall w \in W^{{1,\Phi_1 }}_0(\Omega) \;\text{ as }\; k\rightarrow \infty.
\end{align*}
We know from \eqref{0.29} that
 \begin{align*}
\int_{\Omega}\phi_1(|\nabla u_k|)\nabla u_k \nabla(u_n-u_k)dx=o_k(1),
\end{align*}
then
\begin{align*}
\int_{\Omega}\phi_1(|\nabla u_k|)|\nabla u_k|^2dx\rightarrow \int_{\Omega}\phi_1(|\nabla u_n|)|\nabla u_n|^2dx \;\text{ as }\; k\rightarrow \infty.
\end{align*}
Given this, we can conclude that
%
%Como a sequência $(\phi(|\nabla u_k^n|)|\nabla u_k^n|)_{k\in\mathbb{N}}$ é limitada em $L^{\tilde{\Phi }}(\Omega)$, pela desigualdade de Holder $(\phi(|\nabla u_k^n|)|\nabla u_k^n|^2)_{k\in\mathbb{N}}$ é limitada em $L^{1}(\Omega)$. Sendo 
% \begin{align*}
% 	\phi(|\nabla u_k|)|\nabla u_k|^2\geq 0,\;\;\forall k\in\mathbb{N}.
% \end{align*}
% and
% \begin{align*}
%\phi(|\nabla u_k(x)|)|\nabla u_k(x)|^2\longrightarrow \phi(|\nabla u_n(x)|)|\nabla u_n(x)|^2\;\;\text{ a.e.\;in }\Omega \;\text{ as }\; k\rightarrow \infty.
%\end{align*} 
% when $k\rightarrow \infty$, então por teoria da medida,
 \begin{align*}
 \phi_1(|\nabla u_k|)|\nabla u_k|^2\rightarrow \phi_1(|\nabla u_n|)|\nabla u_n|^2\;\;\text{in }L^{1}(\Omega)\;\text{ as }\; k\rightarrow \infty.
 \end{align*} 
By $(\phi_{3})$ together with the $\Delta_2$-condition, it follows that
%
% Assim, a menos de subsequência, existe $M\in L^{1}(\Omega)$ com 
% \begin{align*}
% \phi(|\nabla u_k^n|)|\nabla u_k^n|^2\leq M(x)\;\;\text{ a.e.\;in }\Omega
% \end{align*}
%for all $k\in\mathbb{N}$. Diante disso, como $\Phi\in (\Delta_2)$, então 
%\begin{align*}
%	\Phi(|\nabla u_k^n- \nabla u_n|)&\leq 	\Phi(|\nabla u_k^n|+ |\nabla u_n|)\\
%	&\leq C(	\Phi(|\nabla u_k^n|)+ \Phi(|\nabla u_n|))\\
%	&\leq \dfrac{C}{\ell_1}	\phi(|\nabla u_k^n|)|\nabla u_k^n|^2+ C\Phi(|\nabla u_n|)\\
%	&\leq \dfrac{C}{\ell_1}	M(x)+ C\Phi(|\nabla u_n|)
%\end{align*}
% para alguma constante $C>0$. Como 
% \begin{align*}
% \Phi(|\nabla u_k^n(x)-\nabla u_n(x)|)\longrightarrow 0,\;\;\text{ a.e.\;in }\Omega
% \end{align*} 
% when $k\rightarrow \infty$, então pelo Teorema da Convergência Dominada,
% \begin{align*}
% \int_{\Omega}\Phi(|\nabla u_k(x)-\nabla u_n(x)|)dx\longrightarrow0\;\text{ as }\; k\rightarrow \infty. 
% \end{align*}
%Sabendo que $\Phi\in (\Delta_2)$, concluímos that
 \begin{align*}
 u_k\rightarrow u_n\;\;\text{ in }W^{1,\Phi_1}_0(\Omega)\;\text{ as }\; k\rightarrow \infty.
 \end{align*}
This finishes the proof.
 
\end{proof}

\begin{lemma}\label{0.62}
For $\lambda>\lambda_0$, the sequence $(w_n,y_n)$ is bounded in $X$. Moreover 
	\begin{align}\label{0.46}
		J_{\lambda,n} (w_n,y_n)=c_{\lambda,n}\;\;\text{ and }\;\; J_{\lambda,n}'(w_n,y_n)=0\text{ in }X_n^*.
	\end{align}
\end{lemma}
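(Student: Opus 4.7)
The plan is to first fix $n$ and upgrade the Palais--Smale sequence $(u_k,v_k)\subset X_n$ produced in \eqref{2.50} to a strongly convergent one, then let $(w_n,y_n)$ be its limit and read off the two identities in \eqref{0.46}; finally we will obtain the uniform bound of $(w_n,y_n)$ in $X$ via an Ambrosetti--Rabinowitz type identity, combined with the $n$--independent upper bound of $c_{\lambda,n}$ coming from Lemma \ref{0.26}.

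For the strong convergence in $X_n$, Lemma \ref{0.30} already gives $u_k\to w_n$ in $W^{1,\Phi_1}_0(\Omega)$. Since $V_A^n$ is finite dimensional and $(v_k)$ is bounded in $V_A^n$ by the boundedness of the Palais--Smale sequence, a subsequence converges strongly in $V_A^n$ to some $y_n$; this is where finite dimensionality (and not merely the weak$^*$ convergence \eqref{2.01}) is essential. Then the continuity of $J_{\lambda,n}$ yields $J_{\lambda,n}(w_n,y_n)=c_{\lambda,n}$, and the norm-to-weak$^*$ continuity of $J_{\lambda,n}'$ together with $J_{\lambda,n}'(u_k,v_k)\to 0$ gives $J_{\lambda,n}'(w_n,y_n)=0$ in $X_n^*$.

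For the uniform boundedness in $X$, I would exploit that $(w_n,y_n)$ is now a genuine critical point of $J_{\lambda,n}$: writing
\[
c_{\lambda,n}=J_{\lambda,n}(w_n,y_n)-J_{\lambda,n}'(w_n,y_n)\Bigl(\tfrac{1}{\mu}w_n,\tfrac{1}{\nu}y_n\Bigr),
\]
one uses $(\phi_3)$ to dominate the $\Phi_i$ terms, the facts that $\mu\in(m_1,\ell_1^*)$ and $\nu\in(0,\ell_1)\subset(0,\ell_2)$ to make the coefficients positive, together with $(G_2)$ on the $G$-piece and $(R_3)$ on the $R$-piece. This yields
\[
c_{\lambda,n}\ \geq\ \Bigl(1-\tfrac{m_1}{\mu}\Bigr)\int_\Omega\Phi_1(|\nabla w_n|)\,dx\ +\ \Bigl(\tfrac{\ell_2}{\nu}-1\Bigr)\int_\Omega\Phi_2(|\nabla y_n|)\,dx,
\]
so $\|w_n\|_{1,\Phi_1}$ and $\|y_n\|_{1,\Phi_2}$ are bounded in $n$ (because Lemma \ref{0.26} furnishes an $n$--independent upper bound for $c_{\lambda,n}$ through $\max_{\theta\geq 0}\mathcal{V}_\lambda(\theta)$).

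The remaining point, and the one I expect to be the main obstacle, is controlling $|y_n|_A$. I would split cases as in Lemma \ref{lemma1}: if $a_2<\ell_2^*$, then $L^{\Phi_{2*}}(\Omega)\hookrightarrow L^A(\Omega)$ and the norms $\lVert\cdot\rVert_{1,\Phi_2}$ and $\lVert\cdot\rVert_A$ are equivalent, so the $V_A$-bound is automatic. If $a_2\geq \ell_2^*$, I would invoke $(G_1)(iii)$ with $s=0$ to obtain $G(t)\geq C\,A(|t|)$, and then keep the extra nonnegative term $\int_\Omega\!\bigl[\tfrac{1}{\nu}g(y_n)y_n-G(y_n)\bigr]\,dx\geq 0$ in the previous identity but use $\tfrac{1}{\nu}g(y_n)y_n\geq G(y_n)\geq CA(|y_n|)$ to extract a bound of $\int_\Omega A(|y_n|)\,dx$ in terms of $c_{\lambda,n}$; this converts to a bound on $|y_n|_A$ via $\xi^0_A$. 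Once both components are uniformly bounded, $(w_n,y_n)$ is bounded in $X$, concluding the proof of \eqref{0.46}.
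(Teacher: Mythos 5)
Your proposal matches the paper's argument almost step for step: strong convergence of $(u_k,v_k)$ to $(w_n,y_n)$ in $X_n$ via Lemma \ref{0.30} and finite dimensionality of $V_A^n$, then continuity of $J_{\lambda,n}$ and norm-to-weak$^*$ continuity of $J'_{\lambda,n}$ to read off \eqref{0.46}, and finally the Ambrosetti--Rabinowitz identity combined with the $n$-independent bound on $c_{\lambda,n}$ from Lemma \ref{0.26}, splitting the two cases $a_2<\ell_2^*$ and $a_2\geq\ell_2^*$ exactly as the paper does (in the latter case using $(G_1)(i)$--$(iii)$ to dominate $\int_\Omega A(|y_n|)\,dx$). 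No genuine gap; the only minor imprecision is that the coefficient of $\int_\Omega A(|y_n|)\,dx$ you extract should be written as $\bigl(\tfrac{a_1 C}{\nu}-a_1 C\bigr)$ rather than argued via $\tfrac{1}{\nu}g(y_n)y_n\geq G(y_n)$ alone, but the chain of inequalities you cite ($(G_1)(iii)$ at $s=0$ plus $(G_1)(i)$) gives precisely that.
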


\begin{proof}
Since $V^n _{A}$ is a finite dimensional space, $(v_k)$ converges strongly to $(y_n)$ in $V^n _{A}$. Therefore,
$$(u_k,v_k)\longrightarrow (w_n,y_n)\;\;\text{ in } X_n\;\text{ as }\; k\rightarrow \infty.$$
which implies
$$J_{\lambda,n}(w_n,y_n)=c_{\lambda,n} \in [b_{n},d_{n}] \;\;\text{ and }\;\; J'_{\lambda,n}(w_n,y_n)=0\;\;\text{ in } X_n ^*.$$

In a first moment, let us assume that $a_2< \ell_2^*$. By hypothesis $m_2<a_1$, then $W^{1,\Phi_2}_0(\Omega)$ is continuously embedded in $L^{A}(\Omega)$, thus, there will be $C>0$ such
\begin{align}\label{2.57}
	\lVert v\lVert_{{A}}\leq C\lVert v\lVert_{{1,\Phi_2}}, \;\;\forall v\in V^{n}_A
\end{align} 
By $(G_2)(ii)$, $(R_3)$ and \eqref{2.57},
{\small\begin{align}\label{2.56}
\begin{split}
	c_{\lambda,n}= &J_{\lambda,n}(w_n,y_n)-J_{\lambda,n}'(w_n,y_n) (\dfrac{1}{\mu}w_n,\dfrac{1}{\nu}y_n)\\
\geq &\left(1-\dfrac{m_1}{\mu}\right)\int_{\Omega}\Phi_1(|\nabla w_n|)dx+\left(\dfrac{\ell_2}{\nu}-1\right)\int_{\Omega}\Phi_2(|\nabla y_n|)dx+
\left(\dfrac{\ell_1^*}{\mu}-1\right)\int_{\Omega}\Phi_{1*}(|\nabla w_n|)dx\\
\geq&\left(1-\dfrac{m_1}{\mu}\right) \xi^{0}_{\Phi}(\lVert w_n\lVert_{1,\Phi_1})+\left(\dfrac{\ell_2}{\nu}-1\right)\xi^{0}_{\Phi_2}\big(\frac{1}{C}\lVert y_n\lVert_{A}\big).
\end{split}
\end{align}}
It follows from the inequalities \eqref{0.37} and \eqref{2.56} that $(w_n,y_n)$ is bounded in $X$.

Now, let us assume that $a_2\geq \ell_2^*$. By $(R_3)$, $(G_2)$, \eqref{2.57} and from items $(i)-(iii)$ of $(G_1)$, it follows that
{\small\begin{align*}
c_{\lambda,n}= &J_{\lambda,n}(u_n,v_n)-J_{\lambda,n}'(u_n,v_n) (\dfrac{1}{\mu}u_n,\dfrac{1}{\nu}v_n)\\
\geq &\left(1-\dfrac{m_1}{\mu}\right)\int_{\Omega}\Phi_1(|\nabla u_n|)dx+\left(\dfrac{\ell_2}{\nu}-1\right)\int_{\Omega}\Phi_2(|\nabla v_n|)dx+\dfrac{C}{\nu}\int_{\Omega}a(|v_n|)|v_n|^2dx-Ca_1\int_{\Omega}A(v_n)dx\\
\geq &\left(1-\dfrac{m_1}{\mu}\right)\int_{\Omega}\Phi_1(|\nabla u_n|)dx+\left(\dfrac{\ell_2}{\nu}-1\right)\int_{\Omega}\Phi_2(|\nabla v_n|)dx+\left(\dfrac{a_1C}{\nu}-a_1C\right)\int_{\Omega}A(|v_n|)dx\\
\geq&\left(1-\dfrac{m_1}{\mu}\right) \xi^{0}_{\Phi_1}(\lVert u_n\lVert_{1,\Phi_1})+\left(\dfrac{\ell_2}{\nu}-1\right) \xi^{0}_{\Phi_2}(\lVert v_n\lVert_{1,\Phi_2})+\left(\dfrac{a_1C}{\nu}-a_1C\right)\xi^{0}_{A}(| v_n|_{{A}}).
\end{align*}}
From the above inequality together with \eqref{0.37}, we can conclude that $(w_n,y_n)$ is bounded by $X$.

\end{proof}

\subsection{Proof of Theorem \ref{teo1}}
The proof of Theorem \ref{teo1} will be carried out in three lemmas. We start observing
that since $(w_n,y_n)$ is bounded, there is no loss of generality in assuming that
\begin{align}\label{0.55}
(w_n,y_n)\xrightharpoonup[\quad]{\ast} (u,v)\;\;\text{ in }X\;\text{ as }\; n\rightarrow \infty.
\end{align}
The same arguments used in the proof of Lemma \ref{0.30} can be repeated to show that
	\begin{align}\label{0.03}
u_n\rightarrow u\;\;\text{ in }W^{1,\Phi_1}_0(\Omega)\;\text{ as }\; n\rightarrow \infty.
\end{align}

By the limit \eqref{0.55}, it follows that
\begin{align}\label{0.56}
y_n\xrightharpoonup[\quad]{\ast} v\;\;\text{ in }L^{A}(\Omega)
\end{align}
and
\begin{align}\label{0.57}
y_n\xrightharpoonup[\quad]{\ast} v\;\;\text{ in }W^{1,\Phi_2}_0(\Omega)
\end{align}

\begin{lemma}\label{0.53}
For $\lambda>\lambda_0$, the sequence $(y_n)$ verifies the following limit
	$y_n\rightarrow v$ in $L^{A}(\Omega)$.	
\end{lemma}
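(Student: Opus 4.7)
\emph{Strategy.} I would split the proof into two cases according to the growth of $A$ relative to $\Phi_{2*}$, mirroring the dichotomy in $(G_1)$.

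\emph{Case I: $a_2 < \ell_2^*$.} Here $A$ grows essentially slower than $\Phi_{2*}$ near infinity, so by the compact Orlicz--Sobolev embedding (\cite{Adms}, Thm.~8.32) $W^{1,\Phi_2}_0(\Omega)$ embeds compactly into $L^A(\Omega)$. Lemma \ref{0.62} provides boundedness of $(y_n)$ in $W^{1,\Phi_2}_0(\Omega)$, so along a subsequence $y_n \to \tilde{v}$ strongly in $L^A(\Omega)$; the weak$^*$ limit \eqref{0.56} forces $\tilde{v} = v$, and the usual subsequence argument then promotes this to convergence of the whole sequence.

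\emph{Case II: $a_2 \geq \ell_2^*$.} My plan is to exploit the strict monotonicity $(G_1)(iii)$ together with $(G_1)(i)$ to produce the pointwise bound $(g(t) - g(s))(t-s) \geq C\, a(|t-s|)|t-s|^2 \geq C a_1 A(|t-s|)$. Using the density \eqref{0.38}, I would pick $v_n \in V^n_A$ with $v_n \to v$ in $V_A$, so $v_n \to v$ strongly in both $W^{1,\Phi_2}_0(\Omega)$ and $L^A(\Omega)$. Since $y_n - v_n \in V^n_A$, testing $J'_{\lambda,n}(w_n, y_n) = 0$ against $(0, y_n - v_n)$ and rearranging yields the modular estimate
\begin{align*}
C a_1 \int_\Omega A(|y_n - v_n|)\,dx \leq{}& -\int_\Omega \phi_2(|\nabla y_n|)\nabla y_n \cdot \nabla(y_n - v_n)\,dx\\
&-\lambda\int_\Omega R_v(x,w_n,y_n)(y_n - v_n)\,dx - \int_\Omega g(v_n)(y_n - v_n)\,dx.
\end{align*}
I would then show that each of the three terms on the right vanishes as $n \to \infty$. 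For the $\phi_2$-term, convexity of $\Phi_2$ gives $\phi_2(|\nabla y_n|)\nabla y_n \cdot \nabla(v_n - y_n) \leq \Phi_2(|\nabla v_n|) - \Phi_2(|\nabla y_n|)$, so the strong convergence $v_n \to v$ in $W^{1,\Phi_2}_0(\Omega)$ combined with weak$^*$ lower semicontinuity of the modular $\int \Phi_2(|\nabla \cdot|)$ forces its $\limsup$ to be nonpositive. For the $R_v$-term, the subcritical bounds $b_2 < \ell_1^*$, $z_2 < \ell_2^*$ of $(R_2)$ together with the compact subcritical Orlicz--Sobolev embeddings of $W^{1,\Phi_i}_0(\Omega)$ and the strong convergence $w_n \to u$ in $W^{1,\Phi_1}_0(\Omega)$ from \eqref{0.03} make the pairing vanish by a standard strong-times-weakly-convergent argument. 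For the $g(v_n)$-term, the chain $1 < m_2 < a_1 \leq a_2 < \infty$ forces both $A, \tilde{A} \in (\Delta_2)$, so $L^A(\Omega)$ is reflexive, the Nemytskii map $g\colon L^A \to L^{\tilde{A}}$ is continuous (hence $g(v_n) \to g(v)$ strongly), and H\"{o}lder's inequality combined with weak convergence $y_n - v_n \rightharpoonup 0$ in $L^A(\Omega)$ closes the estimate. Consequently $\int A(|y_n - v_n|)\,dx \to 0$; because $A \in (\Delta_2)$, modular convergence is equivalent to norm convergence, so $y_n - v_n \to 0$ in $L^A(\Omega)$, and combining with $v_n \to v$ in $L^A(\Omega)$ yields the claim.

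\emph{Main obstacle.} The most delicate step is the $\phi_2$-term, because $\tilde{\Phi}_2$ may fail $(\Delta_2)$ and therefore the Nemytskii map $\nabla u \mapsto \phi_2(|\nabla u|)\nabla u$ need not be sequentially continuous in the relevant weak$^*$ pairings between $L^{\Phi_2}$ and $L^{\tilde{\Phi}_2}$, so one cannot simply pass to the limit. The one-sided convexity estimate above, coupled with weak$^*$ lower semicontinuity of the modular (which does persist in the nonreflexive setting), is what allows me to sidestep this continuity issue.
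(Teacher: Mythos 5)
Your proof is correct and uses essentially the same ingredients as the paper's: the strict monotonicity of $g$ from $(G_1)(iii)$ paired with $(G_1)(i)$ to bound the $A$-modular, convexity of $\Phi_2$ plus weak$^*$ lower semicontinuity of $\int_\Omega \Phi_2(|\nabla\cdot|)\,dx$ to dominate the $\phi_2$-term, compact subcritical embeddings for the $R_v$-term, the $\Delta_2$-regularity of $A$ and $\tilde{A}$ coming from $1<m_2<a_1\le a_2<\infty$, and the dichotomy $a_2<\ell_2^*$ (compact embedding) versus $a_2\ge\ell_2^*$. The only cosmetic difference is that the paper fixes a finite-dimensional approximant $\xi_k\in V_A^{j(k)}$ and performs a double limit (first $n\to\infty$, then $k\to\infty$, recombined through a modular triangle inequality), whereas you take a diagonal sequence $v_n\in V_A^n$ with $v_n\to v$ and pass all limits at once, which streamlines the bookkeeping without changing the substance.
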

\begin{proof}
	From \eqref{0.38}, there is $(\xi_k) \subset V_A $ such that
	\begin{align}\label{0.43}
	\xi_k\rightarrow v\;\text{ in }\;V_A
	\end{align}
	and
	\begin{align*}
	\xi_k=\sum_{i=1}^{j(k)}\alpha_ie_i\in V_A^{j(k)}
	\end{align*}
	where $j(k)\in \mathbb{N}$ for all $k\in\mathbb{N}$. For each $k \in \mathbb{N}$, it follows that
	\begin{align*}
	V_A^{j(k)}\subset V_A^{n}\;\text{ for all }\; n\geq n_0
	\end{align*}
	for some $n_0\geq j(k)$.
	
	If  $a_2\geq \ell_2^*$, from $(G_1)$, we have that there is $C>0$ such that
	{\small\begin{align}\label{0.39}
	\begin{split}
	a_1C\int_{\Omega}A(|y_n-\xi_k|)dx\leq C\int_{\Omega}a(|y_n-\xi_k|)|y_n-\xi_k|^2dx
	\leq \int_{\Omega}(g(y_n)-g(\xi_k))(y_n-\xi_k)dx.
	\end{split}
	\end{align}}
	Since $J'_{\lambda,n}(u_n,v_n)=0$ in $X_n ^*$, we derive that
	{\small\begin{align}\label{0.40}
	\begin{split}
	\int_{\Omega}(g(y_n)-g(\xi_k))(y_n-\xi_k)dx
	=&\int_{\Omega}\phi_2(|\nabla y_n|)\nabla y_n(\nabla\xi_k-\nabla y_n)dx-\lambda\int_{\Omega}R_{v}(x,w_n,y_n)y_ndx\\
	&+\lambda\int_{\Omega}R_{v}(x,w_n,y_n)\xi_kdx-\int_{\Omega}g(\xi_k)(y_n-\xi_k)dx
	\end{split}
	\end{align}}
Due to the convexity of $\Phi_2$, we have
\begin{align}\label{0.41}
\Phi_2(|\nabla \xi_k|)-\Phi_2(|\nabla y_n|)\geq \phi_2(|\nabla y_n|)\nabla y_n\nabla (\xi_k-y_n), \;\;n\in \mathbb{N}
\end{align}
It follows from the above inequalities that
	\begin{align}
	\begin{split}
	a_1C\int_{\Omega}A(|y_n-\xi_k|)dx\leq&\int_{\Omega}\Phi_2(|\nabla  \xi_k|)dx-\int_{\Omega}\Phi_2(|\nabla w_n|)dx-\lambda\int_{\Omega}R_{v}(x,w_n,y_n)y_ndx\\
	&+\lambda\int_{\Omega}R_{v}(x,w_n,y_n)\xi_kdx-\int_{\Omega}g(\xi_k)(y_n-\xi_k)dx
	\end{split}
	\end{align}
Knowing that
$$ y_n(x)\longrightarrow v(x)\;\text{ a.e.\;in }\Omega\;\;\text{ and }\;\;\dfrac{\partial y_n}{\partial x_i}\xrightharpoonup[\quad]{} \dfrac{\partial v}{\partial x_i}\;\text{ in }L^1(\Omega),$$
we can apply [\citenum{ET}, Theorem $2.1$, Chapter 8] to get
\begin{align}\label{0.42}
\liminf_{n\rightarrow\infty}\int_{\Omega}\Phi_2(|\nabla v_n|)dx\geq \int_{\Omega}\Phi_2(|\nabla v|)dx.
\end{align}
Taking as limit $n \rightarrow \infty$, it follows that
	{\small\begin{align}
	\begin{split}
	\limsup_{n\rightarrow\infty}\left(a_1C\int_{\Omega}A(|y_n-\xi_k|)dx\right)\leq&\int_{\Omega}\Phi_2(|\nabla  \xi_k|)dx-\int_{\Omega}\Phi_2(|\nabla v|)dx-\int_{\Omega}R_{v}(x,u,v)vdx\\
	&+\int_{\Omega}R_{v}(x,u,v)\xi_kdx-\int_{\Omega}g(\xi_k)(v-\xi_k)dx.
	\end{split}
	\end{align}}
By the limit \eqref{0.43}, given $\delta > 0$ there is $k_0 \in \mathbb{N}$ such that
	{\footnotesize\begin{align*}
	\dfrac{1}{a_1C}\left[\int_{\Omega}\Phi_2(|\nabla  \xi_k|)dx-\int_{\Omega}\Phi_2(|\nabla u|)dx-\int_{\Omega}R_{v}(x,u,v)vdx
	+\int_{\Omega}R_{v}(x,u,v)\xi_kdx-\int_{\Omega}g(\xi_k)(v-\xi_k)dx\right]<\dfrac{\delta}{2},
	\end{align*}}
	for each $k\geq k_0$. Hence,
	\begin{align}
	\begin{split}
	\limsup_{n\rightarrow\infty}\int_{\Omega}A(|y_n-\xi_k|)dx\leq\dfrac{\delta}{2},\;\;\text{ for all } k\geq k_0.
	\end{split}
	\end{align}
	Given $0<\varepsilon<4$, for $\delta$ sufficiently small, it follows that
	\begin{align}\label{0.44}
	\begin{split}
	\limsup_{n\rightarrow\infty}\int_{\Omega}A(|y_n-\xi_k|)dx\leq\dfrac{\varepsilon}{4},\;\;\text{ for all } k\geq k_0.
	\end{split}
	\end{align}
	Fixing $k \geq k_0$ sufficiently large such that
	\begin{align}\label{0.01}
	|\xi_k-v|_{A}<\Big(\dfrac{\varepsilon}{4}\Big)^{1/a_1}
	\end{align}
%	daí,
%	\begin{align*}
%	\int_{\Omega}A\left({|\xi_k-v|}\right)dx\leq\dfrac{\varepsilon}{4}.
%	\end{align*}
follows from $(G_1)(i)$ that
\begin{align}\label{0.45}
\int_{\Omega}A\left({|y_n-v|}\right)dx\leq C\int_{\Omega}A\left({|y_n-\xi_k|}\right)dx+C |\xi_k-v|_{A}^{a_1}\leq C\int_{\Omega}A\left({|y_n-\xi_k|}\right)dx+\dfrac{\varepsilon C}{4},
\end{align}
for some constant $C>0$ that does not depend on $n$ and $k$.
By \eqref{0.44} and \eqref{0.45}, we have
\begin{align*}
\limsup_{n\rightarrow\infty}\int_{\Omega}A\left({|y_n-v|}\right)dx<\dfrac{\varepsilon C}{2}
\end{align*}
and by the arbitrariness of $\varepsilon>0$,
\begin{align*}
\lim_{n\rightarrow\infty}\int_{\Omega}A\left({|y_n-v|}\right)dx=0.
\end{align*}
Therefore,
\begin{align*}
y_n\rightarrow v\;\;\text{ in }L^A(\Omega).
\end{align*}

Now, let us consider $a_2< \ell_2^*$, then $A$ increases essentially more slowly than $\Phi_{2*}$ near infinity. In this case, the space $W^{1,\Phi_2}_0(\Omega)$ is compactly embedded in $L^{A}(\Omega)$, therefore, the desired limit follows directly from that compact embedding.
	
\end{proof}

The following lemma is made using similar arguments to those given in Lemma \ref{0.30}. Therefore, we will omit its proof.
\begin{lemma}\label{0.54}
For $\lambda>\lambda_0$, the sequence $(y_n)$ verifies the following limit
$y_n\rightarrow v$ in $W^{1,\Phi_2}_0(\Omega)
$.	
\end{lemma}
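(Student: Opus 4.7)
The plan is to mirror the closing argument of Lemma \ref{0.30}, transposed to the second component. The reduction is standard: if one establishes
\begin{align*}
\int_\Omega\bigl(\phi_2(|\nabla y_n|)\nabla y_n-\phi_2(|\nabla v|)\nabla v\bigr)\cdot\nabla(y_n-v)\,dx\longrightarrow 0,
\end{align*}
then Dal Maso--Murat \cite{Dal} yields $\nabla y_n(x)\to \nabla v(x)$ a.e.\ in $\Omega$; since $m_2<\infty$ by $(\phi_3)$ we have $\Phi_2\in(\Delta_2)$, and the a.e.\ convergence of the gradients together with the convergence of the modular $\int_\Omega \phi_2(|\nabla y_n|)|\nabla y_n|^2\,dx\to \int_\Omega \phi_2(|\nabla v|)|\nabla v|^2\,dx$ upgrades to $y_n\to v$ in $W^{1,\Phi_2}_0(\Omega)$, exactly as in the final paragraph of Lemma \ref{0.30}.

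The only genuinely new ingredient is the displayed limit above, and its difficulty is that $v\in V_A$ need not lie in any $V_A^n$, so one cannot directly test $J_{\lambda,n}'(w_n,y_n)(0,v-y_n)=0$. I would reuse the density sequence $(\xi_k)\subset\bigcup_n V_A^{j(k)}$ with $\xi_k\to v$ in $V_A$ from Lemma \ref{0.53}. For fixed $k$ and all $n\ge j(k)$, the identity $J_{\lambda,n}'(w_n,y_n)(0,\xi_k-y_n)=0$ yields
\begin{align*}
\int_\Omega \phi_2(|\nabla y_n|)\nabla y_n\cdot\nabla(\xi_k-y_n)\,dx = -\int_\Omega H_v(x,w_n,y_n)(\xi_k-y_n)\,dx.
\end{align*}
Combining with the convexity inequality $\Phi_2(|\nabla\xi_k|)-\Phi_2(|\nabla y_n|)\ge\phi_2(|\nabla y_n|)\nabla y_n\cdot\nabla(\xi_k-y_n)$, passing to $\liminf_{n\to\infty}$ by \cite[Ch.~8, Thm.~2.1]{ET}, and exploiting $w_n\to u$ in $W^{1,\Phi_1}_0(\Omega)$ from \eqref{0.03} together with $y_n\to v$ in $L^A(\Omega)$ from Lemma \ref{0.53} to pass the $H_v$-integral to its limit, one obtains
\begin{align*}
\int_\Omega \Phi_2(|\nabla\xi_k|)\,dx - \int_\Omega \Phi_2(|\nabla v|)\,dx \ge -\int_\Omega H_v(x,u,v)(\xi_k-v)\,dx.
\end{align*}
Letting $k\to\infty$ both sides vanish (by continuity of the modular on $W^{1,\Phi_2}_0(\Omega)$, again from $\Delta_2$), and the same argument applied to $-\xi_k$ in place of $\xi_k$ plus the density of $\bigcup_n V_A^n$ in $V_A$ identifies $(u,v)$ as a weak solution of the second equation tested against any element of $V_A$.

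Using this weak-equation identification, one deduces $\int_\Omega \phi_2(|\nabla v|)\nabla v\cdot\nabla(y_n-v)\,dx=o_n(1)$ (the test function $\phi_2(|\nabla v|)\nabla v$ pairs with $\nabla(y_n-v)\xrightharpoonup{\ast}0$), while choosing a diagonal $k=k(n)\to\infty$ with $n\ge j(k(n))$ and $\xi_{k(n)}\to v$ in $W^{1,\Phi_2}_0(\Omega)$ turns the identity $J_{\lambda,n}'(w_n,y_n)(0,\xi_{k(n)}-y_n)=0$ into $\int_\Omega\phi_2(|\nabla y_n|)\nabla y_n\cdot\nabla(y_n-v)\,dx=o_n(1)$; subtracting gives the key limit. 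The main obstacle is precisely this double-limit bookkeeping forced by $v\notin V_A^n$, together with the verification that $H_v(x,w_n,y_n)$ converges in a dual sense strong enough to close the limit on $\xi_k-y_n$; this is ensured by the subcritical bounds on $R_v$ in $(R_2)$, by $(G_1)(ii)$ through $L^A$-continuity of the Nemytskii operator $g$, and by the strong convergences of $w_n$ and $y_n$ already at hand.
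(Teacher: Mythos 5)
Your proposal is correct and follows essentially the same route the paper intends: the paper dispatches this lemma with ``similar arguments to those given in Lemma \ref{0.30},'' and you have carried out exactly that transposition while correctly identifying and handling the one genuinely new wrinkle, namely that $v$ need not belong to any $V_A^n$, which forces the density sequence $(\xi_k)$ and the double-limit bookkeeping already set up in Lemma \ref{0.53}. The only imprecision is the parenthetical justification of $\int_\Omega\phi_2(|\nabla v|)\nabla v\cdot\nabla(y_n-v)\,dx=o_n(1)$ by pairing with $\nabla(y_n-v)\xrightharpoonup[\quad]{\ast}0$: since in this section $\tilde\Phi_2$ may fail $\Delta_2$ (indeed $\ell_2=1$ is allowed), one cannot a priori place $\phi_2(|\nabla v|)\nabla v$ in $E^{\tilde\Phi_2}(\Omega)$, so the weak$^*$ pairing alone does not close; however the primary route you actually describe, testing the derived weak equation for $v$ against $y_n-v\in V_A$ and invoking $y_n\to v$ in $L^A(\Omega)$ together with the compact embeddings into $L^B$ and $L^Z$ granted by $(R_2)$, does close, so the argument stands.
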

From the above lemmas, we can conclude that
\begin{align}\label{0.02}
y_n\rightarrow v\;\text{ in }\;V_A.
\end{align}
In view of the above facts, it is possible to obtain the following result.
\begin{lemma}
For $\lambda>\lambda_0$, the pair $(u, v)$ satisfies $J'(u,v)=0$ in $X$ and $J(u, v) \neq0$.
\end{lemma}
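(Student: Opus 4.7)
The plan is to pass to the limit in the identities $J_{\lambda,n}'(w_n,y_n)=0$ in $X_n^*$ and $J_{\lambda,n}(w_n,y_n)=c_{\lambda,n}$ given by Lemma \ref{0.62}, using the strong convergences $w_n\to u$ in $W^{1,\Phi_1}_0(\Omega)$ (from \eqref{0.03}) and $y_n\to v$ in $V_A$ (from \eqref{0.02}), combined with the density \eqref{0.38} of $\bigcup_n V_A^n$ in $V_A$.

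For the criticality, fix $(\varphi,\psi)\in X$. By \eqref{0.38} there exists $(\xi_k)\subset \bigcup_n V_A^n$ with $\xi_k\to\psi$ in $V_A$; for each $k$, say $\xi_k\in V_A^{n_k}$. For every $n\geq n_k$ we have $(\varphi,\xi_k)\in X_n$, hence
\[
\int_{\Omega}\phi_1(|\nabla w_n|)\nabla w_n\cdot\nabla\varphi\,dx-\int_{\Omega}\phi_2(|\nabla y_n|)\nabla y_n\cdot\nabla\xi_k\,dx=\int_{\Omega}H_u(x,w_n,y_n)\varphi\,dx+\int_{\Omega}H_v(x,w_n,y_n)\xi_k\,dx.
\]
Strong convergence of $w_n$ in $W^{1,\Phi_1}_0(\Omega)$ together with the $\Delta_2$-condition on $\Phi_1$ yields (via a.e.\ convergence of $\nabla w_n$ and Vitali's theorem) $\phi_1(|\nabla w_n|)\nabla w_n\to \phi_1(|\nabla u|)\nabla u$ in $L^{\tilde\Phi_1}(\Omega)$; similarly for the $\phi_2$-term thanks to $y_n\to v$ in $V_A$ and the $\Delta_2$-condition on $\Phi_2$. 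The growth conditions $(G_1)$, $(R_2)$ together with the strong convergences and the Sobolev embeddings let us pass to the limit in the two Nemytskii terms as well. This gives $J_\lambda'(u,v)(\varphi,\xi_k)=0$; now letting $k\to\infty$ and using the continuity of $J_\lambda'$ from the norm topology of $X$ to the weak$^*$-topology of $X^*$ delivers $J_\lambda'(u,v)(\varphi,\psi)=0$. Hence $(u,v)$ is a weak solution of $(S_1)$.

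For the nontriviality, note that $J_\lambda(w_n,y_n)=J_{\lambda,n}(w_n,y_n)=c_{\lambda,n}$, and by Lemma \ref{lemma1} together with \eqref{0.66} we have the uniform lower bound $c_{\lambda,n}\geq b_n\geq C\sigma^{\ell_2}>0$, while Lemma \ref{0.26} provides a uniform upper bound. Up to a subsequence, $c_{\lambda,n}\to c_\lambda\in[C\sigma^{\ell_2},+\infty)$. Since $(w_n,y_n)\to(u,v)$ strongly in $X$ and $J_\lambda$ is continuous on $X$, passage to the limit yields $J_\lambda(u,v)=c_\lambda\geq C\sigma^{\ell_2}>0$, so in particular $J_\lambda(u,v)\neq 0$ and $(u,v)\neq(0,0)$.

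The main obstacle is the passage to the limit in the quasilinear terms $\int_\Omega\phi_i(|\nabla \cdot|)\nabla\cdot\nabla\zeta\,dx$: a priori one only has weak$^*$ convergence of $\phi_i(|\nabla w_n|)\nabla w_n$ in $L^{\tilde\Phi_i}(\Omega)$. The upgrade to the required convergence against arbitrary test functions rests on the strong convergences \eqref{0.03}--\eqref{0.02} (themselves obtained through the concentration--compactness analysis and Lemmas \ref{0.30}, \ref{0.53}, \ref{0.54}) and on the $\Delta_2$-condition for $\Phi_1,\Phi_2$, which permits the Nemytskii operators $t\mapsto \phi_i(|t|)t$ to behave continuously between the relevant Orlicz spaces. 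Once these ingredients are in place, the density argument in $V_A$ via \eqref{0.38} extends the Euler--Lagrange identity from $X_n$ to all of $X$.
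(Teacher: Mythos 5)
Your proof is correct and takes essentially the same route as the paper's: exploit $J'_{\lambda,n}(w_n,y_n)=0$ from Lemma \ref{0.62}, pass $n\to\infty$ using the strong convergences $w_n\to u$ in $W^{1,\Phi_1}_0(\Omega)$ and $y_n\to v$ in $V_A$, extend by density of $\bigcup_n V^n_A$ in $V_A$, and deduce nontriviality from $c_{\lambda,n}\geq b_n\geq C\sigma^{\ell_2}>0$ together with continuity of $J_\lambda$. One minor over-statement: strong convergence of $\phi_1(|\nabla w_n|)\nabla w_n$ in $L^{\tilde\Phi_1}(\Omega)$ would require $\tilde\Phi_1\in\Delta_2$ (precisely the hypothesis being relaxed in this section); the weak$^*$ convergence in $L^{\tilde\Phi_1}(\Omega)$ already extracted in Lemma \ref{0.30}, tested against the fixed $\nabla\varphi\in E^{\Phi_1}(\Omega)=L^{\Phi_1}(\Omega)$, is what is actually needed and is what the paper uses.
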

\begin{proof}
	Fixing $k, n \in \mathbb{N}$ with $n \geq k$, we have $X_k \subset X_n$. Thus, for $(\varphi_1, \varphi_2) \in X_k$, it	follows that
	$$J'_{\lambda,n}(w_n, y_n) (\varphi_1, \varphi_2) = 0,\quad\forall n\geq k,$$
	because, by Lemma \ref{0.62}, $J'_{\lambda,n}(w_n, y_n) = 0$. Combining \eqref{0.02} with \eqref{0.03} we get
	\begin{align}\label{0.64}
		J'_{\lambda}(u, v) (\varphi_1, \varphi_2) = 0,\quad\text{ for all }(\varphi_1, \varphi_2)\in X_k.
	\end{align}
	We claim that
		\begin{align}\label{0.65}
	J'_{\lambda}(u, v) (\varphi_1, \varphi_2) = 0,\quad\text{ for all } (\varphi_1, \varphi_2)\in X.
	\end{align}
	In fact, we start observing that for all $\varphi_1\in W^{1,\Phi_1}_0(\Omega)
	$, the pair $(\varphi_1, 0) \in X_k$ for all $k$. Hence, $J'_{\lambda}(u, v)(\varphi_1, 0) = 0$. On the other hand, for $\varphi_2 \in V_A$, there exists $\chi_{n} \in V_A^{k(n)}$
	such that
	\begin{align*}
		\lim_{n\rightarrow\infty} \chi_{n}=\varphi_2, \quad\text{ in } V_A.
	\end{align*}
	From \eqref{0.64},
		\begin{align}
	J'_{\lambda}(u, v) (0, \chi_n) = 0,\quad\text{ for all }n\in \mathbb{N}.
	\end{align}
	which implies after passage to the limit as $n \rightarrow \infty$ that
		\begin{align}
	J'_{\lambda}(u, v) (0, \varphi_2) = 0,\quad\text{ for all }\varphi_2\in V_A.
	\end{align}
	Thus, \eqref{0.65} is proved.
	Using the fact that $(w_n, y_n) \rightarrow (u, v)$ in $X$ and that
	$J'_{\lambda}(w_n, y_n) \geq b_n \geq C\sigma^{\ell_2}>0, \text{ for all } n\in \mathbb{N}$, for some constant $C >0$ which does not depend on $n$, we have that $J'_{\lambda}(u, v) \geq C\sigma^{\ell_2}>0,$
	from where it follows that $(u, v)$ is a nontrivial solution for $(S_1)$, and the proof is complete.
	
\end{proof}

\section{The $N$-functions ${\Phi_1} $ and ${\Phi_2 }$ may not verify the $\Delta_{2}$-condition.}

In this section, we study the existence of solutions for the following class of quasilinear systems in Orlicz-Sobolev spaces:
\begin{equation*}
\left\{\;
\begin{aligned}
-div(\phi_1(|\nabla u|)\nabla u)&=R_u(x,u,v)\;\text{ in } \Omega& \\
-div(\phi_2(|\nabla v|)\nabla v)&=-R_v(x,u,v)\;\text{ in }  \Omega& \\
u=v&=0\;\text{ on } \partial\Omega&
\end{aligned}
\right.
\leqno{(S_2)}
\end{equation*}
where $\Omega$ is a bounded domain in $\mathbb{R}^N$($N \geq 2$) with smooth boundary $\partial \Omega$, and  $\phi_i (i=1,2):(0,\infty)\rightarrow(0,\infty)$  are two functions which satisfy:

\noindent{$(\phi_1 ')$} $\phi_i\in C^1(0,+\infty)$,  $ t\mapsto t\phi_i(t)$ are stricly increasing and $t\mapsto t^2\phi_i(t)$ is convex in $\mathbb{R}$.

\noindent{$(\phi_2 ')$} $t\phi_i(t)\rightarrow 0$ as $t\rightarrow 0$ and $t\phi_i(t)\rightarrow +\infty$ as $t\rightarrow +\infty$

\noindent{$(\phi_3 ')$} 
$\displaystyle
1<\ell_i\leq{\frac{t^{2}\phi_i(t)}{\Phi_i(t)}},$ where $\Phi_i(t)=\int_{0}^{|t|}s\phi_i(s)ds$, $t\in\mathbb{R}$.

\noindent{$(\phi_4 ')$} $\displaystyle\liminf_{t\rightarrow +\infty}\frac{\Phi_i(t)}{t^{q_i}}>0, \text{ for some } q_i>N.$
%\begin{equation*}
%t\longmapsto t\phi(t)\;\text{ is increasing for }t>0 \text{ and } \;t\longmapsto t^2\phi(t) \;\text{ is convex in } \mathbb{R}.
%\leqno{(\phi_1')}
%\end{equation*}
%\begin{equation*}
%\displaystyle\lim_{t\rightarrow0^{+}}t\phi(t)=0 \;\;\text{ and }\;\; \displaystyle\lim_{t\rightarrow+\infty}t\phi(t)=+\infty.
%\leqno{(\phi_2')}
%\end{equation*}
%\begin{equation*}
%1<\ell\leq {\dfrac{\phi(t)t^{2}}{\Phi(t)}},\;\;\forall t>0.
%\leqno{(\phi_3')}
%\end{equation*}

\noindent{$(\phi_5 ')$} $\left|1-\frac{\Phi_1(t)}{t^2\phi_1(t)}\left(1+\frac{t\phi_1'(t)}{\phi_1(t)}\right)\right|\leq 1$, $t\in\mathbb{R}$.
%\begin{equation*}
%\left|1-\dfrac{\Phi(t)}{t^2\phi(t)}\left(1+\frac{t\phi'(t)}{\phi(t)}\right)\right|\leq 1.
%\leqno{(\phi_4')}
%\end{equation*}
%\noindent{($\phi_5'$)} There is $\delta>0$ such that 
%\begin{align*}
%1+\dfrac{t\phi'(t)}{\phi(t)}\leq \dfrac{t^2\phi(t)}{\Phi(t)},\;\;\;\forall t\in (0,\delta)
%\end{align*}
%and
%\begin{align*}
%1+\dfrac{t\phi'(t)}{\phi(t)}\geq \dfrac{t^2\phi(t)}{\Phi(t)},\;\;\;\forall t\geq\delta.
%\end{align*}
%\begin{equation*}
%\displaystyle\liminf_{t\rightarrow +\infty}\dfrac{\Phi(t)}{t^p}>0\; \text{ for some } p>N.
%\leqno{(\phi_5')}
%\end{equation*}
%
%\begin{equation*}
%t\longmapsto t\psi(t)\;\text{ is increasing for }t>0 \text{ and } t\longmapsto t^2\psi(t) \text{ is convex in } \mathbb{R}.
%\leqno{(\psi_1 ')}
%\end{equation*}
%\begin{equation*}
%\displaystyle\lim_{t\rightarrow0^{+}}t\psi(t)=0 \;\;\text{ and }\;\; \displaystyle\lim_{t\rightarrow+\infty}t\psi(t)=+\infty.
%\leqno{(\psi_2 ')}
%\end{equation*}
%\begin{equation*}
%1<m\leq{\dfrac{\psi(t)t^{2}}{\Psi(t)}}, \;\;\;\forall t>0.
%\leqno{(\psi_3 ')}
%\end{equation*}
%\begin{equation*}
%\displaystyle\liminf_{t\rightarrow +\infty}\dfrac{\Psi(t)}{t^q}>0, \text{ for some } q>N.
%\leqno{(\psi_4 ')}
%\end{equation*}

The assumption $(\phi_4 ')$ implies that the embedding
\begin{align*}
W_{0}^{1,\Phi_i}(\Omega)\hookrightarrow W^{1,q_i}(\Omega)
\end{align*}
for some $q_i>N$ is continuous. Hence,
\begin{align*}
W_{0}^{1,\Phi_i}(\Omega)\hookrightarrow C^{0,\alpha_i}(\overline{\Omega})
\end{align*}
is continuous for some $\alpha_i\in(0,1)$ and
\begin{align}\label{2.0}
W_{0}^{1,\Phi_i}(\Omega)\hookrightarrow C(\overline{\Omega})
\end{align}
is compact. In what follows, we denote by $\Lambda_i>0$ the best constant that satisfies
\begin{equation}\label{0}
\lVert u\lVert_{C(\overline{\Omega})}\leq \Lambda_i \lVert u\lVert_i,\;\;\forall u\,\in\,W_{0}^{1,\Phi_i}(\Omega), 
\end{equation} 
where $\lVert\cdot\lVert_i=\lVert\nabla\cdot\lVert_{L^{\Phi_i}(\Omega)}$. 

If $d$ is twice the diameter of $\Omega$, then there exists $\delta\geq0$ such that
\begin{equation*}
\dfrac{t^2}{d^2}\leq \Phi_1
(t/d),\;\;\;\forall |t| \geq \delta
\leqno{(\phi_6')}
\end{equation*}

Before continuing this section, we would like to point out that $\Phi_1(t)=(e^{t^2}-1)/2$ and $\Phi_2(t) = |t|^p/p$ with $p> N$ satisfying $(\phi_1 ') - (\phi_6 ')$. Moreover, we would like to recall that $(u,v)\in W^{1,\Phi_1}_0(\Omega)\times W^{1,\Phi_2}_0(\Omega)$ is a weak solution of $(S_2)$ whenever
\begin{align*}
\int_{\Omega}\phi_1(|\nabla u|)\nabla u\nabla \varphi_1dx-\int_{\Omega}\phi_2(|\nabla u|)\nabla u\nabla \varphi_2dx=\int_{\Omega}R_u(x,u,v)\varphi_1dx+\int_{\Omega}R_v(x,u,v)\varphi_2dx,
\end{align*}
for all $(\varphi_1,\varphi_2)\in W^{1,\Phi_1}_0(\Omega)\times W^{1,\Phi_2}_0(\Omega)$. Here, let us consider the $R$ function satisfying the following conditions:
\\
\noindent{$(R_1 ')$} $R\in C^{1}(\overline{\Omega}\times\mathbb{R}^{2})$ and $R_v(x,u,0)\neq 0$ for all $(x, u)\in \Omega\times \mathbb{R}$.

 \noindent{$(R_2 ')$} $R(x,u,0)\leq \dfrac{1}{2
 	}\Phi_1(u/d)+\frac{1}{2d^2
 }|u|^2$, for all $(x, u)\in \Omega\times \mathbb{R}$.

 \noindent{$(R_3 ')$} $R(x,0,v)\geq- \dfrac{1}{2}\Phi_2(v/d)-Mv$, for all $(x, v)\in \Omega\times \mathbb{R}$, for some constant $M>0$.

\noindent{$(R_4 ')$} There are $\nu >0$, $\mu>1$ and $0<\beta<1$ such that
\begin{equation*}
\dfrac{1}{\mu}h(u)R_u(x,u,v)u+\dfrac{1}{\nu}R_v(x,u,v)v-R(x,u,v)\geq 0,\;\;\;\forall(x,u,v)\in \Omega\times\mathbb{R}^{2}
\leqno{(i)}
\end{equation*}
and
\begin{equation*}
\beta R(x,u,v)-\dfrac{1}{\mu}h(u)R_u(x,u,v)u\geq 0,\;\;\;\forall(x,u,v)\in \Omega\times\mathbb{R}^{2}
\leqno{(ii)}
\end{equation*}
where $h(u)=\frac{\Phi_1(u)}{u^2\phi_1(u)}$.

The main result of this section is the following.
	\begin{theorem}\label{teo2}
	Assume that $(\phi_1')-(\phi_6')$ and $(R_1)-(R_4)$ hold. Then, the system $(S_2)$ possesses a nontrivial solution.
	\end{theorem}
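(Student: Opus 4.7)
The plan is to follow a Galerkin-type reduction, paralleling Section~3 but replacing the linking theorem with Theorem~\ref{saddle}. I work in the smaller space $E = W_0^1 E^{\Phi_1}(\Omega) \times W_0^1 E^{\Phi_2}(\Omega)$ on which $J$ is $C^1$, pick a sequence $(e_j) \subset W_0^1 E^{\Phi_2}(\Omega)$ with dense linear span, and set $V_n = \mathrm{span}\{e_1,\dots,e_n\}$ and $X_n = W_0^1 E^{\Phi_1}(\Omega) \times V_n$. On each $X_n$ I apply Theorem~\ref{saddle} with the decomposition $X_n = Y_n \oplus Z$, where $Y_n = \{0\} \times V_n$ is finite-dimensional (the ``negative'' $v$-direction) and $Z = W_0^1 E^{\Phi_1}(\Omega) \times \{0\}$.

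For the geometric hypotheses I combine $(R_2')$ with the Poincar\'e-type inequality $\int_\Omega \Phi_1(u/d)\,dx \le \int_\Omega \Phi_1(|\nabla u|)\,dx$ and $(\phi_6')$ (which compares $|u|^2/d^2$ to $\Phi_1(u/d)$) to bound $J|_Z$ uniformly from below by some $\alpha_2 \in \mathbb{R}$; likewise $(R_3')$ together with the compact embedding $W_0^{1,\Phi_2}(\Omega) \hookrightarrow C(\overline{\Omega})$ granted by $(\phi_4')$ forces $J(0,v) \to -\infty$ as $\|v\|_2 \to \infty$, so choosing $\rho_n$ large enough yields $J|_{\partial \mathcal{M}_n} \le \alpha_1 < \alpha_2$. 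Theorem~\ref{saddle} then produces a Palais--Smale sequence $(u_k^n, v_k^n) \subset X_n$ at the min-max level $c_n$.

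To extract a bounded subsequence I invoke the Ambrosetti--Rabinowitz condition $(R_4')(i)$ using the tailored test pair $\bigl(\tfrac{1}{\mu} h(u_k^n)\, u_k^n,\, \tfrac{1}{\nu} v_k^n\bigr)$ with $h(u) = \Phi_1(u)/(u^2 \phi_1(u))$; condition $(\phi_5')$ is precisely what forces $(h(u)u)'$ to be bounded, which is exactly what makes this test function admissible in the derivative computation. Combining the identity $J_n(u_k^n,v_k^n) - \langle J_n'(u_k^n,v_k^n),(\tfrac{1}{\mu} h(u_k^n) u_k^n,\tfrac{1}{\nu}v_k^n)\rangle$ with $(\phi_3')$ and $(R_4')(i)$ yields coercive lower bounds on $\int \Phi_1(|\nabla u_k^n|)$ and $\int \Phi_2(|\nabla v_k^n|)$. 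Arguments analogous to Lemmas~\ref{0.30} and \ref{0.62}, simplified by the finite-dimensionality of $V_n$ and the absence of a critical-growth term, then produce a critical point $(u_n, v_n) \in X_n$ of $J_n$ at level $c_n$; condition $(R_4')(ii)$ bounds $c_n$ uniformly from above, giving that $(u_n, v_n)$ is uniformly bounded in $E$.

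Finally, I pass to the limit as $n \to \infty$. Since $\ell_i > 1$ in $(\phi_3')$ forces $\tilde\Phi_i \in \Delta_2$, each $L^{\Phi_i}(\Omega)$ is the dual of the separable space $L^{\tilde\Phi_i}(\Omega) = E^{\tilde\Phi_i}(\Omega)$, so bounded sequences in $W_0^{1,\Phi_i}(\Omega)$ admit weak$^*$-convergent subsequences. Extracting $(u_n, v_n) \xrightharpoonup{\ast} (u,v)$ along a subsequence, arguments analogous to Lemmas~\ref{0.30}, \ref{0.53} and \ref{0.54}---combining convexity of $\Phi_i$, a Brezis--Lieb-type identity, and the Dal Maso--Murat compactness for gradient sequences---upgrade this to strong convergence in $W_0^{1,\Phi_i}(\Omega)$, after which a density argument (any test pair with second component in $\bigcup_k V_k$ lies in $X_n$ for all large $n$) shows that $(u,v)$ is a weak solution of $(S_2)$. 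Nontriviality follows immediately from $(R_1')$: since $R_v(x,0,0) \not\equiv 0$, the origin fails to be a critical point of $J$, and hence $(u,v) \neq (0,0)$. The main obstacle will be the strong convergence of $(v_n)$ in $W_0^{1,\Phi_2}(\Omega)$, because---unlike the $u$ variable---the functional offers no positive coercive contribution in $v$ and no critical-growth concentration-compactness is available, so all the necessary control must be squeezed out of the interplay between $(R_4')$, $(\phi_5')$ and the weak$^*$ topology inherent to the non-reflexive setting.
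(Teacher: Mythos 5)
Your overall architecture matches the paper's: work in $W^1_0E^{\Phi_1}(\Omega)\times W^1_0E^{\Phi_2}(\Omega)$ where $J$ is $C^1$, do a Galerkin reduction in the $v$-variable, apply Theorem~\ref{saddle} on $X_n=W^1_0E^{\Phi_1}(\Omega)\times V_n$ with $Y_n=\{0\}\times V_n$, use $(R_2')$, $(\phi_6')$ and the Poincar\'e inequality for the lower bound on $Z$, use $(R_3')$ and $(\phi_3')$ for $J(0,v)\to -\infty$, bound the Palais--Smale sequence with the test pair $\big(\tfrac1\mu h(u_k)u_k,\tfrac1\nu v_k\big)$ (which is exactly the paper's $\big(\tfrac1\mu\eta(u_k),\tfrac1\nu v_k\big)$ since $h(t)t=\eta(t)$), use $(\phi_5')$ for admissibility and $(R_4')(ii)$ to bound $c_n$. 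All of this is on target.

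The genuine gap is exactly where you flagged it: your plan for the limit $n\to\infty$ is to ``upgrade'' weak$^*$ convergence of $(w_n,y_n)$ to strong convergence via a Brezis--Lieb identity and Dal Maso--Murat, by analogy with Lemmas~\ref{0.30}, \ref{0.53}, \ref{0.54}. That route does not work here, and the paper does something structurally different. Dal Maso--Murat requires an (almost) critical-point \emph{equation} for the approximating sequence, but in the passage $n\to\infty$ you only have that $(w_n,y_n)$ solves the truncated problem in $X_n'$, and to make the $v$-equation useful you would already need to know the limit $(u,v)$ solves something --- the circularity you cannot break because (as you note) the $v$-contribution $-\int\Phi_2(|\nabla v|)$ enters $J$ with the wrong sign and offers no coercivity. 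The paper bypasses strong convergence entirely: it keeps only weak$^*$ convergence (Lemma~\ref{10.6}, since $\ell_i>1$ gives $\tilde\Phi_i\in(\Delta_2)$), uses [Ekeland--Temam, Thm.~2.1, Ch.~8] for lower semicontinuity of $\int\Phi_i(|\nabla\cdot|)$, uses the compact embedding $W^{1,\Phi_i}_0(\Omega)\hookrightarrow C(\overline\Omega)$ from $(\phi_4')$ to pass all the $R$-terms, and derives a \emph{variational inequality} for the limit by convexity. The Euler--Lagrange equation is then extracted from this inequality by a perturbation $v_\varepsilon=\tfrac{1}{1-\varepsilon/2}\big((1-\varepsilon)w_n+\varepsilon\varphi_1\big)$, dividing by $\varepsilon$ and applying dominated convergence (Lemma~\ref{2.20}), and finally by exploiting that $C^\infty_0(\Omega)$ and $V_n$ are vector spaces to turn one-sided inequalities into equalities. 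Dal Maso--Murat and Brezis--Lieb never appear in Section~4.

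A second, related omission: you treat the weak$^*$ limit $(w_n,y_n)$ of the inner Palais--Smale sequence as if it lies in $X_n$ and is a critical point of $J_n$ there. But when $\Phi_1\notin(\Delta_2)$, $W^1_0E^{\Phi_1}(\Omega)$ is not weak$^*$-closed in $W^{1,\Phi_1}_0(\Omega)$, so a priori $(w_n,y_n)\in X_n'=W^{1,\Phi_1}_0(\Omega)\times V_n$ only; this is why the paper separately proves $w_n\in D(J_{\Phi_1})\cap \mathrm{dom}(\phi_1(t)t)$ (Lemma~\ref{2.21}, using the difference quotient $\Phi_1(|\nabla u_k-\tfrac1k\nabla u_k|)-\Phi_1(|\nabla u_k|)$ over $-1/k$, convexity of $\phi_1(t)t^2$ from $(\phi_1')$, and the identity $\phi_1(t)t^2=\Phi_1(t)+\tilde\Phi_1(\phi_1(t)t)$) before it can even state $c_n=J_n(w_n,y_n)$. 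Without this step the Galerkin scheme has no well-defined critical point to pass to the limit. Your density/testing argument for weak solutions is otherwise fine, and your nontriviality via $(R_1')$ matches the paper.
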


We observe that $R(u,v)=\Phi_1(u)^\sigma\Phi_2(v)^\theta+v^+$ satisfies $(R_1 ')-(R_4 ')$ for some $\theta,\sigma>1$, where $v^+ := \max\{0, v\}$.

Under the assumptions $(\phi_1')-(\phi_6')$ it is well known in the literature that the $N$-functions $\Phi_1$ and $ \Phi_2 $ might not satisfy the $\Delta_{2}$-condition, and as a consequence, $W^{1,\Phi_1}_0(\Omega)$ and $W^{1,\Phi_2}_0(\Omega)$ might not be reflexive anymore. Another important fact we can highlight is that under these conditions, it is well known that there are $u\in W^{1,\Phi_1}_0(\Omega)$ and $v\in W^{1,\Phi_2 }_0(\Omega)$ such that
\begin{align*}
\int_{\Omega}\Phi_1(|\nabla u|)dx=\infty\;\;\text{ and }\;\;\int_{\Omega}\Phi_2(|\nabla v|)dx=\infty
\end{align*}
In order to avoid this problem, we will work with the space  $W^{1}_0E^{\Phi_1}(\Omega)\times W^{1}_0E^{\Phi_2}(\Omega)$, because in this
space the functional $Q:W^{1}_0E^{\Phi_1}(\Omega)\times W^{1}_0E^{\Phi_2}(\Omega)\longrightarrow \mathbb{R}$ given by
\begin{align*}
	Q(u,v)=\int_{\Omega}\Phi_1(|\nabla u|)dx-\int_{\Omega}\Phi_2(|\nabla v|)dx
\end{align*}
belongs to $C^1(W^{1}_0E^{\Phi_1}(\Omega)\times W^{1}_0E^{\Phi_2}(\Omega),\mathbb{R})$. However, independent of $\Delta_{2}$-condition, the embedding \eqref{2.0} guarantee that the funcional $H:W^{1,\Phi_1}_0(\Omega)\times W^{1,\Phi_2}_0(\Omega)\longrightarrow\mathbb{R}$ given by
\begin{align*}
H(u,v)=\int_{\Omega}R(x,u,v)dx
\end{align*}
belongs to $C^1(W^{1,\Phi_1}_0(\Omega)\times W^{1,\Phi_2}_0(\Omega),\mathbb{R})$. In particular, $H|_{W^{1}_0E^{\Phi_1}(\Omega)\times W^{1}_0E^{\Phi_2}(\Omega)}$ is also of class $C^1 $. That is, the energy functional  $J:W^{1}_0E^{\Phi_1}(\Omega)\times W^{1}_0E^{\Phi_2}(\Omega)\longrightarrow \mathbb{R}$ associated to the system $(S_2)$ given by
\begin{align*}
	J(u,v)=\int_{\Omega}\Phi_1(|\nabla u|)dx-\int_{\Omega}\Phi_2(|\nabla v|)dx-\int_{\Omega}R(x,u,v)dx
\end{align*}
belongs to $C^1(W^{1}_0E^{\Phi_1}(\Omega)\times W^{1}_0E^{\Phi_2}(\Omega),\mathbb{R})$.

In order to apply the Saddle-point theorem, in the next one we fix some notations. Since $W^{1}_0 E^{\Phi_2}(\Omega)$ is separable, there exists a sequence $(e_n)\subset W^{1}_0 E^{\Phi_2}(\Omega)$ such that 
\begin{align}\label{2.1}
W^{1}_0 E^{\Phi_2}(\Omega)=\overline{span\{e_n:n\in\mathbb{N}\}}.
\end{align}
Hereafter, for each $n\in\mathbb{N}$ we denote by $V_{n} $, $X_n$ and $X_n'$ the following spaces
\begin{align*}
V_{n} ={span\{e_j:j=1,\cdots,n\}},\quad X_n=W^{1}_0 E^{\Phi_1}(\Omega)\times V_{n} \quad\text{ and }\quad X_n'=W^{1,\Phi_1}_{0}(\Omega)\times V_{n}.
\end{align*}
The restriction of $J$ to $X_n$ will be denoted by $J_{n}$. Then $J_n:X_n\longrightarrow\mathbb{R}$ is the functional given by 
\begin{align*}
J_n(u,v)=\int_{\Omega}\Phi_1(|\nabla u|)dx-\int_{\Omega}\Phi_2(|\nabla v|)dx-\int_{\Omega}R(x,u,v)dx.
\end{align*}
From the regularity of $J$, it follows that $J_n$ belongs to $ C^{1}(X_n,\mathbb{R})$ with
{\small\begin{align*}
J_n'(u,v)(w_1,w_2)=\int_{\Omega}\phi_1(|\nabla u|)\nabla u\nabla w_1dx-\int_{\Omega}\phi_2(|\nabla v|)\nabla v\nabla w_2dx-\int_{\Omega}R_u(x,u,v)w_1dx-\int_{\Omega}R_v(x,u,v)w_2dx,
\end{align*}}
for all $(w_1,w_2)\in X_n.$

In the following, we prove that $J_n$ satisfies the hypotheses of Theorem \ref{saddle}.

\begin{lemma}\label{lemma2}
	Under the space $Z=W^{1}_0E^{\Phi_1}(\Omega)\times\{0\}$ the functional $J_n$ is bounded below.
\end{lemma}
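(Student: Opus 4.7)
The plan is to show that for every $u\in W^{1}_0 E^{\Phi_1}(\Omega)$ the quantity $J_n(u,0)$ is uniformly bounded below, since $J_n$ depends only on $u$ when the second argument vanishes. Starting from
\[
J_n(u,0)=\int_{\Omega}\Phi_1(|\nabla u|)\,dx-\int_{\Omega}R(x,u,0)\,dx,
\]
the idea is to use assumption $(R_2')$ to replace the unknown integrand $R(x,u,0)$ by the explicit upper bound $\tfrac{1}{2}\Phi_1(u/d)+\tfrac{1}{2d^2}|u|^2$, and then absorb both resulting terms into $\int_\Omega \Phi_1(|\nabla u|)\,dx$ modulo a constant depending only on $\Omega$, $d$, and $\delta$.

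First, I would recall the Poincar\'e-type inequality for Orlicz-Sobolev spaces, which, for $d$ equal to twice the diameter of $\Omega$, gives
\[
\int_{\Omega}\Phi_1(u/d)\,dx\ \leq\ \int_{\Omega}\Phi_1(|\nabla u|)\,dx
\qquad\text{for all }u\in W^{1}_0 E^{\Phi_1}(\Omega).
\]
This handles the first term coming from $(R_2')$. For the $L^2$ term, I would split $\Omega$ into $\{|u|\ge\delta\}$ and $\{|u|<\delta\}$, where $\delta$ is the constant furnished by $(\phi_6')$. On the first set, $(\phi_6')$ gives $|u|^2/d^2\leq \Phi_1(u/d)$, so the contribution is controlled by another copy of $\int_\Omega\Phi_1(u/d)\,dx$. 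On the second set, $|u|^2\leq \delta^2$ pointwise, so the contribution is at most $\tfrac{\delta^2|\Omega|}{2d^2}$, a constant independent of $u$.

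Putting these pieces together yields
\[
J_n(u,0)\ \geq\ \int_\Omega\Phi_1(|\nabla u|)\,dx-\int_\Omega \Phi_1(u/d)\,dx-\frac{\delta^2|\Omega|}{2d^2},
\]
and one final application of the Orlicz Poincar\'e inequality collapses the first two terms to $\geq 0$, giving $J_n(u,0)\geq -\tfrac{\delta^2|\Omega|}{2d^2}$, uniformly in $u$ and in $n$. The only delicate point is to justify the Poincar\'e-type inequality in the non-reflexive setting $W^{1}_0 E^{\Phi_1}(\Omega)$; this is classical (see, e.g., the Adams monograph), but since the lemma is stated on $Z=W^{1}_0 E^{\Phi_1}(\Omega)\times\{0\}$ rather than on $W^{1,\Phi_1}_0(\Omega)$, one needs the modular Poincar\'e inequality to hold on functions $u$ for which $\Phi_1(|\nabla u|)\in L^1(\Omega)$ automatically, which is precisely the case on $E^{\Phi_1}$. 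No passage to $\Delta_2$ is required, only the convexity of $\Phi_1$ and the normalization of $d$ built into $(\phi_6')$.
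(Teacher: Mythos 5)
Your proof is correct and follows essentially the same route as the paper: bound $R(x,u,0)$ via $(R_2')$, handle the modular term by the Orlicz Poincar\'e inequality, and control the $L^2$ term by splitting $\Omega$ on $\{|u|\ge\delta\}$ via $(\phi_6')$ and on $\{|u|<\delta\}$ trivially. The paper states the steps more compactly, but the decomposition, the use of each hypothesis, and the final constant $-\tfrac{\delta^2|\Omega|}{2d^2}$ all coincide with yours.
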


\begin{proof}
%Recorde que 
%\begin{align*}
%\lVert u\lVert_{\infty}\leq \Lambda_1\lVert u\lVert_{1},\;\;\;\forall u\in W^{1}_{0}E^{\Phi}(\Omega).
%\end{align*}
By the condition $(R_2 ')$,
	\begin{align}\label{2.61}
	J_{n}(u,0)
	\geq\int_{\Omega} \Phi_1(|\nabla u|)dx-\dfrac{1}{2}\int_{\Omega} \Phi_1(|u|/d)dx-\dfrac{1}{2d^2}\int_{\Omega} |u|^2
	dx.
	\end{align}
It is worth remembering here the Poincaré inequality
		\begin{align*}
\int_{\Omega}\Phi_1(|u|/d)dx\leq \int_{\Omega}\Phi_1(|\nabla u|)dx, \; \forall u\in W^{1}_{0}E^{\Phi_1}(\Omega).
	\end{align*}
	For more details on this inequality, we infer the reader to \cite{JP}.
Hence, using the Poincaré inequality together with the hypothesis $(\phi_{6}')$ on the inequality \eqref{2.61}, we obtain
	\begin{align*}
	J_{n}(u,0)
	\geq& -\dfrac{1}{2d^2}\int_{[|u|\leq\delta ]} |u|^2\geq -\dfrac{\delta^2}{2d^2}|\Omega|,\;\;\forall u\in W^{1}_{0}E^{\Phi_1}(\Omega).
	\end{align*}
This finishes the proof.

\end{proof}

\begin{lemma}\label{lemma21} If $\lVert v\lVert_2\rightarrow\infty$, then  $J(0,v)\rightarrow -\infty$.
\end{lemma}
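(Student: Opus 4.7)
My plan is to combine condition $(R_3')$, the Orlicz--Sobolev Poincar\'e inequality already used in Lemma \ref{lemma2}, and the lower bound $\ell_2>1$ from $(\phi_3')$. Starting from
$$J(0,v)=-\int_{\Omega}\Phi_2(|\nabla v|)\,dx-\int_{\Omega}R(x,0,v)\,dx,$$
I would apply $(R_3')$ to get $-R(x,0,v)\leq \tfrac{1}{2}\Phi_2(v/d)+Mv$, and then invoke the Poincar\'e-type inequality $\int_{\Omega}\Phi_2(|v|/d)\,dx\leq \int_{\Omega}\Phi_2(|\nabla v|)\,dx$ (valid for any $N$-function, cf.\ \cite{JP}, applied here to $\Phi_2$ exactly as it was applied to $\Phi_1$ in Lemma \ref{lemma2}) to absorb half of the modular term. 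This yields
$$J(0,v)\leq -\tfrac{1}{2}\int_{\Omega}\Phi_2(|\nabla v|)\,dx+M\int_{\Omega}v\,dx.$$

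Next I would estimate each surviving term in terms of $\lVert v\lVert_2$. For the linear term, the continuous embedding $W_0^{1,\Phi_2}(\Omega)\hookrightarrow C(\overline{\Omega})$ in \eqref{2.0}--\eqref{0} gives $\int_{\Omega}v\,dx\leq |\Omega|\,\lVert v\lVert_{C(\overline{\Omega})}\leq |\Omega|\Lambda_2\,\lVert v\lVert_2$. For the modular I would use the elementary consequence of $(\phi_3')$ that $\Phi_2(\lambda t)\geq \lambda^{\ell_2}\Phi_2(t)$ for all $\lambda\geq 1$, $t>0$ (obtained by integrating $(\log\Phi_2(t))'\geq \ell_2/t$), together with the Luxemburg identity $\int_{\Omega}\Phi_2(|\nabla v|/\lVert v\lVert_2)\,dx=1$, to conclude
$$\int_{\Omega}\Phi_2(|\nabla v|)\,dx\geq \lVert v\lVert_2^{\,\ell_2}\quad\text{whenever }\lVert v\lVert_2\geq 1.$$
Combining the two estimates, for $\lVert v\lVert_2\geq 1$ we obtain
$$J(0,v)\leq -\tfrac{1}{2}\lVert v\lVert_2^{\,\ell_2}+M|\Omega|\Lambda_2\lVert v\lVert_2,$$
and since $\ell_2>1$ the right-hand side tends to $-\infty$ as $\lVert v\lVert_2\to\infty$.

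The delicate point is justifying the Luxemburg identity without the $\Delta_2$-condition on $\Phi_2$. The key observation is that $v\in W_0^1E^{\Phi_2}(\Omega)$, so the map $k\mapsto \int_{\Omega}\Phi_2(|\nabla v|/k)\,dx$ is finite for every $k>0$; monotone convergence as $k'\downarrow \lVert v\lVert_2$ gives $\int\Phi_2(|\nabla v|/\lVert v\lVert_2)\leq 1$, and monotone convergence as $k'\uparrow\lVert v\lVert_2$ (combined with the definition of the infimum) gives the reverse inequality, so equality holds. This is precisely the place where working in $E^{\Phi_2}$ rather than $L^{\Phi_2}$ matters, and is the only step that requires care beyond routine manipulation.
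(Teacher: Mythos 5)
Your proposal is correct and follows essentially the same route as the paper: apply $(R_3')$, absorb the modular term via the Poincar\'e inequality, estimate the linear term through the embedding \eqref{0}, and use the inequality $\Phi_2(rt)\geq t^{\ell_2}\Phi_2(r)$ (for $t\geq 1$) derived from $(\phi_3')$ to obtain $\int_{\Omega}\Phi_2(|\nabla v|)\,dx\geq\lVert v\lVert_2^{\ell_2}$ when $\lVert v\lVert_2\geq 1$. The only difference is that you spell out the justification of the Luxemburg identity $\int_{\Omega}\Phi_2(|\nabla v|/\lVert v\lVert_2)\,dx=1$ for $v\in W^1_0E^{\Phi_2}(\Omega)$, a step the paper leaves implicit when passing from the pointwise inequality to the modular bound; this extra care is sound and is indeed exactly where the restriction to $E^{\Phi_2}$ rather than $L^{\Phi_2}$ matters.
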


\begin{proof}	
	
Let $v \in W^{1}_{0}E^{\Phi_1}(\Omega)$ with $\lVert v\lVert_{1}\geq 1$. The assumption $(R_3 ')$ together with the Poincaré inequality implies that
	\begin{align}\label{2.5}
	J(0,v)
\leq-\frac{1}{2}\int_{\Omega} \Phi_2(|\nabla v|)dx+M\int_{\Omega} {|v|}dx
 	\end{align}	

From $(\phi_3 ')$,
$$\frac{d}{ds}ln(\Phi_2(rs))=\frac{\psi_2(rs)r^2s}{\Phi_2(rs)}\geq \frac{\ell_2}{s},\;\;\forall s,r>0$$
thus,
$$\int_{1}^{t} \frac{d}{ds}ln(\Phi_2(rs))ds\geq {\ell_2}\int_{1}^{t}\frac{1}{s} ds, \;\;\;\forall t\geq1.$$
Therefore,
$$ln\frac{\Phi_2(rt)}{\Phi_2(r)}\geq ln(t^{\ell_2}),\;\;\;\forall t\geq1.$$
Because of the monotonicity of the logarithmic function,
$$\frac{\Phi_2(rt)}{\Phi_2(r)}\geq t^{\ell_2},\;\;\;\forall t\geq1.$$
And as a consequence of this inequality, we have
\begin{align}\label{7}
\int_{\Omega} \Phi_2(|\nabla v|)dx\geq \lVert v\lVert_{2}^{\ell_2} \;\;\text{ for } \;\lVert v\lVert_{2}\geq1.
\end{align}
 	
By means of the inequalities \eqref{2.5} and \eqref{7}, we conclude that
\begin{align*}
J(0,v)\leq-\lVert v\lVert_{2}^{\ell_2}+M|\Omega|\Lambda_2\lVert v\lVert_{2}.
\end{align*}
Since $1<\ell_2$, the result follows.
\end{proof}

\begin{corollary}\label{lemma22} If $\lVert v\lVert_2\rightarrow\infty$, then  $J_n(0,v)\rightarrow -\infty$.

\end{corollary}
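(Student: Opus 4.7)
The statement is an immediate consequence of the preceding Lemma \ref{lemma21}, so the proof should be essentially a one-line reduction. The plan is as follows.

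First, I would observe that by construction $V_n \subset W^1_0 E^{\Phi_2}(\Omega)$ and $X_n = W^1_0 E^{\Phi_1}(\Omega) \times V_n$, so that for every $v \in V_n$ the pair $(0,v)$ belongs to both $X_n$ and to the full product space on which $J$ is defined. Since $J_n$ is by definition the restriction of $J$ to $X_n$, we have the identity
\begin{equation*}
J_n(0,v) = J(0,v), \qquad \forall\, v \in V_n.
\end{equation*}

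Next, I would let $(v_k) \subset V_n$ be any sequence with $\lVert v_k\lVert_2 \to \infty$. Because $V_n \subset W^1_0 E^{\Phi_2}(\Omega)$, the sequence $(v_k)$ is also a sequence in $W^1_0 E^{\Phi_2}(\Omega)$ whose $\lVert\cdot\lVert_2$-norm tends to infinity. Applying Lemma \ref{lemma21} yields $J(0,v_k) \to -\infty$, and combining with the identity above gives $J_n(0,v_k) \to -\infty$, which is the desired conclusion.

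There is no real obstacle here: the only point worth making explicit is that the norm $\lVert\cdot\lVert_2$ used on $V_n$ is inherited from $W^1_0 E^{\Phi_2}(\Omega)$, so that the hypothesis $\lVert v\lVert_2 \to \infty$ has the same meaning in both settings and Lemma \ref{lemma21} applies verbatim. (One could alternatively reprove the estimate inside $V_n$ using the Poincaré inequality and the lower bound $\int_\Omega \Phi_2(|\nabla v|)\,dx \geq \lVert v\lVert_2^{\ell_2}$ for $\lVert v\lVert_2 \geq 1$, exactly as in Lemma \ref{lemma21}, but this is redundant.)
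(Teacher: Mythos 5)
Your proof is correct and matches the paper's intent exactly: the corollary is stated without proof precisely because it is immediate from Lemma \ref{lemma21} via the observation that $J_n$ is the restriction of $J$ to $X_n$, so $J_n(0,v)=J(0,v)$ for all $v\in V_n\subset W^1_0E^{\Phi_2}(\Omega)$ and the $\lVert\cdot\lVert_2$-norm is the same in both settings.
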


\begin{corollary}\label{cr1}
There is $M>0$ such that $\displaystyle\inf_{Z}J_n>\max_{\partial\mathcal{M}_n}J_n:=b_n$ where $\mathcal{M}_n=B_M(0)\cap Y_n,$.
\end{corollary}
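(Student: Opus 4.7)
The plan is to combine Lemma \ref{lemma2} with Corollary \ref{lemma22}, checking that the constant $M$ can be chosen uniformly in $n$ (so that the subsequent application of the saddle-point theorem is legitimate).

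First, Lemma \ref{lemma2} gives a lower bound independent of $n$:
\begin{align*}
\inf_Z J_n \;\geq\; c_0 \;:=\; -\frac{\delta^{2}}{2d^{2}}|\Omega|.
\end{align*}
Next, I would note that $J_n(0,v)=J(0,v)$ for every $v\in V_n\subset W^{1}_0 E^{\Phi_2}(\Omega)$, so the estimate produced in the proof of Lemma \ref{lemma21}, namely
\begin{align*}
J(0,v)\;\leq\;-\lVert v\lVert_{2}^{\ell_{2}}+M_{0}|\Omega|\Lambda_{2}\lVert v\lVert_{2}\qquad\text{for }\lVert v\lVert_{2}\geq 1
\end{align*}
(here $M_{0}$ denotes the constant appearing in $(R_{3}')$, to avoid the clash with the symbol $M$ of the corollary), transfers unchanged to $J_n$ on $Y_n=\{0\}\times V_n$. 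The right-hand side depends only on $\ell_{2},\Lambda_{2},|\Omega|$ and not on $n$, and since $\ell_{2}>1$ it tends to $-\infty$ as $\lVert v\lVert_{2}\to\infty$.

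Consequently I can fix $M>0$ so large (independently of $n$) that
\begin{align*}
-M^{\ell_{2}}+M_{0}|\Omega|\Lambda_{2}M \;<\; c_{0}-1.
\end{align*}
For this choice of $M$, whenever $(0,v)\in\partial \mathcal{M}_n$ — that is, $v\in V_n$ with $\lVert v\lVert_{2}=M$ — the previous estimate yields
\begin{align*}
J_n(0,v)\;\leq\;-M^{\ell_{2}}+M_{0}|\Omega|\Lambda_{2}M\;<\;c_{0}-1\;\leq\;\inf_{Z}J_n-1,
\end{align*}
so that $b_n=\max_{\partial\mathcal{M}_n}J_n<\inf_{Z}J_n$, which is the claim.

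The only point that requires real attention is the uniformity of $M$ in $n$: one must verify that the constants entering the estimate from Lemma \ref{lemma21} (namely $\ell_2$, $\Lambda_2$, $|\Omega|$ and the Poincaré constant used in Lemma \ref{lemma2}) are intrinsic to the ambient spaces $W^{1}_{0}E^{\Phi_i}(\Omega)$ and not to the subspace $V_n$. Once this is observed, the corollary is immediate. This uniformity is essential for the subsequent step, since the saddle-point geometry from Theorem \ref{saddle} will be applied to every $J_n$ with the same radius $M$, producing a sequence of critical values $c_n\in[\inf_Z J_n,\max_{\mathcal{M}_n} J_n]$ that can then be controlled from above and below in $n$.
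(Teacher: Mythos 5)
Your proof is correct and follows essentially the same route as the paper: combine the lower bound on $\inf_Z J_n$ from Lemma \ref{lemma2} with the fact, from Corollary \ref{lemma22}, that $J_n(0,v)\to -\infty$ as $\lVert v\lVert_2\to\infty$ on $Y_n$. The one thing you do more carefully than the paper is to make the uniformity of $M$ in $n$ explicit, by observing that the lower bound $c_0=-\frac{\delta^2}{2d^2}|\Omega|$ from Lemma \ref{lemma2} and the upper estimate $-\lVert v\lVert_2^{\ell_2}+M_0|\Omega|\Lambda_2\lVert v\lVert_2$ from the proof of Lemma \ref{lemma21} both involve only constants attached to the ambient space, not to $V_n$; the paper's terse proof (``fix $M>1$ such that \dots'') leaves this implicit, even though a single $n$-independent $M$ is indeed what is needed for the later passage to the limit in $n$. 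So your write-up is, if anything, a slight improvement in rigor over the original.
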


\begin{proof}
By the above Corollary $J_n(0,v)\rightarrow-\infty$ as $ \lVert v\lVert_2\rightarrow +\infty $ in  $Y$, then, fix $M>1$ such that
$\displaystyle J_n(0,v)<\inf_{Z}J_n$ for $\lVert v\lVert_2=M$ and $v\in Y_n$. Since $dim Y_n<\infty$, we can conclude
$\displaystyle\inf_{Z}J_n>\max_{\mathcal{N}_n}J_n$.
	
\end{proof}

%\begin{corollary}\label{2.24}
%	Existe $M>0$ tal que $\displaystyle d_n=\sup_{\mathcal{M}}J_n \leq M$, para todo $n\in\mathbb{N}$.
%\end{corollary}

Then, by Lemma \ref{lemma1}, we can apply the Saddle-point theorem \ref{saddle} to functional $J_{n}$ using the sets
	$$ Y_n=\{0\}\times V_{n},\quad Z=W^{1}_0E^{\Phi_1}(\Omega)\times\{0\},\quad\text{ and }\quad \mathcal{M}_n=B_M(0)\cap Y_n,$$	
where $M>0$ is obtained from Corollary \ref{cr1}.
Then, there exists a sequence $(u_k, v_k)\subset X_n$ with 
	\begin{align}\label{4.03}
	J_{n}(u_k,v_k)\longrightarrow c_{n}\;\;\text{ and }\;\;	J_{n}'(u_k,v_k)\longrightarrow 0\;\text{as } k\rightarrow+\infty.
	\end{align} 
where 
\begin{align}\label{2.511}
c_{n}=\inf_{\gamma\in \Gamma}\max_{u\in\mathcal{M}_n } J_{n} (\gamma(u)),
\end{align}
with
\begin{align*}
\Gamma=\{\gamma\in C(\mathcal{M}_n, X_n) :\;\gamma|_{\mathcal{N}_n }=Id\}.
\end{align*}

%\textcolor{red}{Uma das grandes dificuldades em se trabalhar com modelos de $N$-funções do tipo $\Phi(t)=e^{t^2} -1$ é provar que sequência acima é limitada. 
%Aqui, preciso mencionar que nem sempre vale o argumento usado por Alves, porém adaptamos alguns argumentos apresentados por eles para fazer com que o próximo lemma se torne verdadeiro}

\begin{lemma}\label{4.02}
	The sequence $(u_k,v_k)$ is bounded in $X_n$.
\end{lemma}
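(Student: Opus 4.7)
The plan is to argue by contradiction: suppose, along a subsequence, that $\lVert(u_k,v_k)\rVert_{X_n}\to\infty$ and derive a contradiction from the data in \eqref{4.03}. The key device will be to test the relation $J_n'(u_k,v_k)\to 0$ in $X_n^*$ against
\[
(\xi_k,\eta_k)=\Bigl(\tfrac{h(u_k)u_k}{\mu},\tfrac{v_k}{\nu}\Bigr),\qquad h(t):=\tfrac{\Phi_1(t)}{t^2\phi_1(t)},
\]
a choice tailored to the structure of $(R_4')$. First I would verify admissibility: by $(\phi_3')$ the function $h$ is bounded by $1/\ell_1$, and the identity $(th(t))'=1-\tfrac{\Phi_1(t)}{t^2\phi_1(t)}\bigl(1+\tfrac{t\phi_1'(t)}{\phi_1(t)}\bigr)$ combined with $(\phi_5')$ gives $|(th(t))'|\leq 1$, whence $|\nabla(h(u_k)u_k)|\leq|\nabla u_k|$; this keeps $h(u_k)u_k$ inside $W^1_0E^{\Phi_1}(\Omega)$ and guarantees $\lVert(\xi_k,\eta_k)\rVert_{X_n}\leq C\lVert(u_k,v_k)\rVert_{X_n}$.

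Next I would form the combination $J_n(u_k,v_k)-J_n'(u_k,v_k)(\xi_k,\eta_k)$ and split it into three brackets. The \emph{$R$-bracket} $-\int R+\tfrac{1}{\mu}\int R_u h(u_k)u_k+\tfrac{1}{\nu}\int R_v v_k$ is nonnegative by $(R_4')(i)$; the \emph{$\Phi_2$-bracket} $-\int\Phi_2(|\nabla v_k|)+\tfrac{1}{\nu}\int\phi_2(|\nabla v_k|)|\nabla v_k|^2$ is at least $(\ell_2/\nu-1)\int\Phi_2(|\nabla v_k|)$ by the lower estimate in $(\phi_3')$. The delicate \emph{$\Phi_1$-bracket} $\int\Phi_1(|\nabla u_k|)-\tfrac{1}{\mu}\int f'(u_k)\phi_1(|\nabla u_k|)|\nabla u_k|^2$, with $f(t)=th(t)$, cannot be controlled directly, because $f'(u_k)$ depends on $u_k$ rather than on $|\nabla u_k|$ and, crucially, no upper $m_1$-type bound on $t^2\phi_1(t)/\Phi_1(t)$ is available in this section. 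To tame it I would perform a second test, namely $J_n'(u_k,v_k)(h(u_k)u_k,0)=o(1)\lVert u_k\rVert_1$, which yields
\[
\tfrac{1}{\mu}\!\int\! f'(u_k)\phi_1(|\nabla u_k|)|\nabla u_k|^2\,dx=\tfrac{1}{\mu}\!\int\! R_u(x,u_k,v_k)h(u_k)u_k\,dx+o(1)\lVert u_k\rVert_1,
\]
and then invoke $(R_4')(ii)$ to bound $\tfrac{1}{\mu}\int R_uh(u_k)u_k\leq\beta\int R$. This gives the $\Phi_1$-bracket a lower bound of $\int\Phi_1(|\nabla u_k|)-\beta\int R-o(1)\lVert u_k\rVert_1$.

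Putting the three estimates together and eliminating $\int R$ via the identity $\int R=\int\Phi_1(|\nabla u_k|)-\int\Phi_2(|\nabla v_k|)-c_n+o(1)$ coming from $J_n(u_k,v_k)\to c_n$, I expect to arrive at
\[
(1-\beta)\!\int\!\Phi_1(|\nabla u_k|)\,dx+\Bigl(\beta+\tfrac{\ell_2}{\nu}-1\Bigr)\!\int\!\Phi_2(|\nabla v_k|)\,dx\leq (1-\beta)c_n+o(1)\bigl(1+\lVert(u_k,v_k)\rVert\bigr).
\]
With $\beta\in(0,1)$ and $\nu$ chosen so that both coefficients on the left are strictly positive (which reads $\nu<\ell_2/(1-\beta)$), the $\ell_i$-growth estimate $\int\Phi_i(|\nabla u|)\,dx\geq\lVert u\rVert_i^{\ell_i}$ for $\lVert u\rVert_i\geq 1$, already derived in the proof of Lemma~\ref{lemma21}, will force both $\lVert u_k\rVert_1$ and $\lVert v_k\rVert_2$ to remain bounded, contradicting the working hypothesis. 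The main obstacle is precisely the $\Phi_1$-bracket: the reflexive, $\Delta_2$-based argument of Section~3 is unavailable because $t^2\phi_1(t)/\Phi_1(t)$ need not be bounded above, and circumventing this through the combined use of $(\phi_5')$ (for admissibility and the pointwise bound on $f'$), the secondary test, and the refined Ambrosetti--Rabinowitz inequality $(R_4')(ii)$ is the heart of the argument.
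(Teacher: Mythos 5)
Your proposal follows the paper's argument essentially verbatim: you test $J_n'$ against the same pair $\bigl(h(u_k)u_k/\mu,\,v_k/\nu\bigr)$, verify admissibility via $(\phi_5')$, invoke $(R_4')(i)$ and the $\ell_2$-bound from $(\phi_3')$ for the $R$- and $\Phi_2$-brackets, and tame the $\Phi_1$-bracket through a second test against $\bigl(h(u_k)u_k/\mu,0\bigr)$ combined with $(R_4')(ii)$, exactly as the paper does with its auxiliary functions $S(t)$ and $h(t)$. The only cosmetic difference is that you eliminate $\int R$ via the energy identity $J_n(u_k,v_k)=c_n+o(1)$, while the paper directly forms $-\beta J_n+J_n'(g_k/\mu,0)$; the two bookkeepings are algebraically equivalent and yield the same contradiction.
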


\begin{proof}Define the function
		\begin{equation*}
	\eta(t)=	\left\{\;
	\begin{aligned}
	\frac{\Phi_1(t)}{t\phi_1(t)}\;&\text{ if } t>0& \\
0\;\;\;\;\;&\text{ if }  t=0&
	\end{aligned}
	\right.
	\end{equation*}
and consider the sequence
	\begin{align*}
		g_k(x)=\eta(u_k(x)),\;\;x\in\Omega.
	\end{align*}
	A direct computation leads to
	\begin{align*}
		\nabla g_k=\left[1-\dfrac{\Phi_1(u_k)}{u_k^2\phi_1(u_k)}\Big(1+\dfrac{u_k\phi_1 '(u_k)}{\phi_1(u_k)}\Big)\right]\nabla u_k.
	\end{align*}
Furthermore, considering the hypothesis $(\phi_{4}')$, it is shown without difficulty that $g_k\in W^{1}_{0}E^{\Phi_1}(\Omega)$ and $\lVert g_k\lVert_{1} \leq \lVert u_k\lVert_ {1}$ for each $k\in \mathbb{N}.$
Being $(u_k,v_k)$ a sequence $(PS)_{c_n}$, then by $(R_4 ')(i)$ and $(\phi_3 ')$
	\begin{align}\label{2.6}
		\begin{split}
		c_n+1+o_k(1)\lVert(u_k,v_k)\lVert
%		=&	c_n+1+o_k(1)\sqrt{\lVert u_k\lVert_1 ^2+\lVert v_k\lVert_1 ^2}\\
%		&\geq 	c_n+1+o_k(1)\sqrt{\lVert g_k\lVert_1 ^2+\lVert v_k\lVert_1 ^2}\\
%		&=c_n+1+o_k(1)\lVert(g_k,v_k)\lVert\\
		\geq& J_{n}(u_k,v_k)-J_{n}'(u_k,v_k)\big(\frac{1}{\mu}g_k,\frac{1}{\nu} v_k\big)\\
		=\int_{\Omega}\Phi(|\nabla u_k|)dx-&\dfrac{1}{\mu}\int_{\Omega}\phi(|\nabla u_k|)|\nabla u_k|^2S(u_k)dx-\int_{\Omega}\Psi(|\nabla v_k|)dx+\dfrac{1}{\nu}\int_{\Omega}\psi(|\nabla v_k|)|\nabla v_k|^2dx\\
		+\dfrac{1}{\mu}\int_{\Omega}R_u(x,u_k,&v_k)u_kh(u_k)dx+\dfrac{1}{\nu}\int_{\Omega}R_v(x,u_k,v_k)v_kdx-\int_{\Omega}R(x,u_k,v_k)dx	\\
		\geq \int_{\Omega}\Phi_1(|\nabla u_k|)dx&-\dfrac{1}{\mu}\int_{\Omega}\phi_1(|\nabla u_k|)|\nabla u_k|^2S(u_k)dx+\left(\dfrac{\ell_2}{\nu}-1\right)\int_{\Omega}\Phi_2(|\nabla v_k|)dx.
		\end{split}
	\end{align}
where $h(t)=\frac{\Phi_1(t)}{t^2\phi_1(t)}$ and $S(t)=1-\frac{\Phi_1(t)}{t^2\phi_1(t)}\Big(1+\frac{t\phi_1 '(t)}{\phi_1(t)}\Big)$.(The functions $S$ and $h$ were introduced by Alves et al in \cite{alves}) On the other hand, it follows from $(R_4 ')(ii)$ that
		\begin{align*}
	\begin{split}
	c_n+1+o_k(1)\lVert g_k\lVert_1\geq& -\beta J_{n}(u_k,v_k)+J_{n}'(u_k,v_k)\big(\frac{1}{\mu}g_k,0\big)\\
	=&-\beta\int_{\Omega}\Phi_1(|\nabla u_k|)dx+\dfrac{1}{\mu}\int_{\Omega}\phi_1(|\nabla u_k|)|\nabla u_k|^2S(u_k)dx+\beta\int_{\Omega}\Phi_2(|\nabla v_k|)dx\\
	&-\dfrac{1}{\mu}\int_{\Omega}R_u(x,u_k,v_k)u_kh(u_k)dx+\beta\int_{\Omega}R(x,u_k,v_k)dx,\\
	\geq&-\beta\int_{\Omega}\Phi_1(|\nabla u_k|)dx+\dfrac{1}{\mu}\int_{\Omega}\phi_1(|\nabla u_k|)|\nabla u_k|^2S(u_k)dx,
	\end{split}
	\end{align*}
i.e,
\begin{align}\label{2.7}
-\dfrac{1}{\mu}\int_{\Omega}\phi_1(|\nabla u_k|)|\nabla u_k|^2S(u_k)dx\geq -c_n-1-o_k(1)\lVert u_k \lVert_1-\beta\int_{\Omega}\Phi_1(|\nabla u_k|)dx.
\end{align}
From \eqref{2.6} and \eqref{2.7},
\begin{align*}
2(c_n+1)+o_k(1)\lVert(u_k,v_k)\lVert\geq (1-\beta)\int_{\Omega}\Phi_1(|\nabla u_k|)dx+\left(\dfrac{ \ell_2}{\nu}-1\right)\int_{\Omega}\Phi_2(|\nabla v_k|)dx.
\end{align*}
%	Como $\lVert(u_k,v_k)\lVert\geq \lVert u_k\lVert_1,$ para todo $k\in\mathbb{N}$, segue que
%\begin{align*}
%	2(c_n+1)+o_k(1)\lVert(u_k,v_k)\lVert\geq (1-\lambda)\int_{\Omega}\Phi(|\nabla u_k|)dx+\left(\dfrac{m}{\nu}-1\right)\int_{\Omega}\Psi(|\nabla v_k|)dx.
%	\end{align*}
 Suppose for contradiction that, up to a subsequence, $\lVert(u_k,v_k)\lVert \rightarrow +\infty$ as $k\rightarrow +\infty$. This way, we need to study the following situations:

\noindent{$(i)$}  $\lVert u_k\lVert_{1}\rightarrow +\infty$ and $\lVert v_k\lVert_{2}\rightarrow \infty$

\noindent{$(ii)$} $\lVert u_k\lVert_{1}\rightarrow +\infty$ and $\lVert v_k\lVert_{2}$ is bounded

\noindent{$(iii)$}  $\lVert v_k\lVert_{2}\rightarrow \infty$ and $\lVert u_k\lVert_{1}$ is bounded

In the first case, there is $k_0\in\mathbb{N}$ such that
\begin{align*}
\int_{\Omega}\Phi_1(|\nabla u_k|)dx\geq \lVert u_k\lVert_{1}\quad\text{ and }\quad\int_{\Omega}\Phi_2(|\nabla v_k|)dx\geq \lVert v_k\lVert_{2},\;\forall k\geq k_0.
\end{align*}
Hence, the inequality \eqref{21} implies that
\begin{align*}
2c_{n} ^2+o_k(1)\lVert(u_k,v_k)\lVert ^2\geq\left(1-\beta\right)^2 \lVert u_k\lVert_{1}^{2} +\left(\dfrac{\ell_2}{\nu}-1\right)^2\lVert v_k\lVert_{2}^{2},\;\forall k\geq k_0.
\end{align*}
Which is absurd.

In case $(ii)$, there is $k_0\in\mathbb{N}$ such that
\begin{align*}
\int_{\Omega}\Phi_1(|\nabla u_k|)dx\geq \lVert u_k\lVert_{1},\;\forall k\geq k_0.
\end{align*}
Thus, the inequality \eqref{21} is reduced to
\begin{align*}
2c_{n} ^2+C_1+o_k(1)\lVert u_k\lVert_{1}\geq\left(1-\beta\right)^2 \lVert u_k\lVert_{1}^{2},\;\forall k\geq k_0.
\end{align*}
which is absurd. The last case is similar to the case $(ii)$. The above analysis shows that $(u_k,v_k)$ is now a bounded sequence in $X_n$.

\end{proof}

From Lemmas \ref{4.02} and \ref{10.6}, we may assume that there exists a subsequence of $(u_k, v_k)$,
still denoted by itself, and  $(w_n, y_n)\in X_n '$ such that
\begin{align}\label{4.01}
u_k \xrightharpoonup[\quad]{\ast}w_n\;\; \text{weakly\;in }& W^{1,\Phi_1}_0(\Omega) \;\;\text{ and }\;\;v_k \xrightharpoonup[\quad]{\ast}y_n\;\; \text{weakly\;in } V_n, \;\;\text{ as }\;\;k\rightarrow\infty.
\end{align}

Here, we highlight that the pair $(w_n, y_n)$ may not belong to the space $X_n$, because whenever $\Phi_{1}$ does not satisfy the $\Delta_{2}$-condition the space $X_n$ is not a weak$^*$ closed subspace of $X_n '$.

The results below will be used to ensure that the sequence $(w_n,y_n)$ is bounded in $ W^{1,\Phi_1}_0(\Omega)\times W^{1,\Phi_2}_0(\Omega) $, moreover, we will do some results that will be fundamental.
\begin{lemma}\label{2.9}
	The sequence $(u_k,v_k)$ obtained in \eqref{4.03} satisfies
	\begin{align*}
	\int_{\Omega}\phi_1(|\nabla u_k|)\nabla u_k\nabla \varphi dx=\int_{\Omega}R_u(x,u_k,v_k)\varphi dx+o_k(1),\;\;\forall k \in\mathbb{N}\;\text{ and }\; \varphi\in W^{1,\Phi_1}_0(\Omega).
	\end{align*}
\end{lemma}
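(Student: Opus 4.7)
My plan is to first establish the identity on the subspace $W^{1}_{0}E^{\Phi_1}(\Omega)$ directly from \eqref{4.03}, and then transport it to all of $W^{1,\Phi_1}_{0}(\Omega)$ by an approximation procedure. Both sides of the claimed identity are well defined on $W^{1,\Phi_1}_{0}(\Omega)$: the second by $(\phi_4 ')$, which yields the embedding $W^{1,\Phi_1}_{0}(\Omega)\hookrightarrow C(\overline{\Omega})$ and hence the integrability of $R_u(\cdot,u_k,v_k)\varphi$; the first by the generalized Hölder inequality in Orlicz spaces, once one observes that $\phi_1(|\nabla u_k|)\nabla u_k \in L^{\tilde{\Phi}_1}(\Omega)$---a standard consequence of $(\phi_3 ')$ and the Young-type identity $\tilde{\Phi}_1(s\phi_1(s)) = s^2\phi_1(s)-\Phi_1(s)$ applied pointwise.

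For $w\in W^{1}_{0}E^{\Phi_1}(\Omega)$ the pair $(w,0)$ is admissible as a test vector in $X_n$, so \eqref{4.03} immediately yields
\[
\int_{\Omega}\phi_1(|\nabla u_k|)\nabla u_k\nabla w\,dx-\int_{\Omega}R_u(x,u_k,v_k)w\,dx = J_n'(u_k,v_k)(w,0) = o_k(1)\lVert w\lVert_{1},
\]
with the error uniform over $\lVert w\lVert_{1}\leq 1$. Given now a general $\varphi\in W^{1,\Phi_1}_{0}(\Omega)$, I would construct a sequence $(\varphi_j)\subset W^{1}_{0}E^{\Phi_1}(\Omega)$ satisfying $\lVert\varphi_j\lVert_{1}\leq C\lVert\varphi\lVert_{1}$, $\varphi_j\to\varphi$ uniformly on $\overline{\Omega}$, and $\nabla\varphi_j\xrightharpoonup[\quad]{\ast}\nabla\varphi$ in $L^{\Phi_1}(\Omega)\cong (E^{\tilde{\Phi}_1}(\Omega))^{\ast}$. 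A natural candidate is a convolution $\varphi_j = \varphi*\rho_{1/j}$ applied to the extension of $\varphi$ by zero, together with a cut-off near $\partial\Omega$ to preserve the boundary condition. Applying the previous display to $w=\varphi_j$ and letting $j\to\infty$ for each fixed $k$, the weak-$\ast$ convergence of $\nabla\varphi_j$ handles the first integral and the uniform convergence of $\varphi_j$ handles the second, giving
\[
\left|\int_{\Omega}\phi_1(|\nabla u_k|)\nabla u_k\nabla \varphi\,dx-\int_{\Omega}R_u(x,u_k,v_k)\varphi\,dx\right|\leq C\,\lVert J_n'(u_k,v_k)\lVert_{X_n^{\ast}}\lVert\varphi\lVert_{1},
\]
which tends to zero by \eqref{4.03}.

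The main obstacle I anticipate is the verification that $\phi_1(|\nabla u_k|)\nabla u_k$ belongs to $E^{\tilde{\Phi}_1}(\Omega)$ (and not only to $L^{\tilde{\Phi}_1}(\Omega)$), since only the membership in $E^{\tilde{\Phi}_1}(\Omega)$ is sufficient to pair with the weak-$\ast$ convergence $\nabla\varphi_j\xrightharpoonup[\quad]{\ast}\nabla\varphi$ in $L^{\Phi_1}(\Omega)$. This should be derivable from $u_k\in W^{1}_{0}E^{\Phi_1}(\Omega)$---which ensures $\int_{\Omega}\Phi_1(|\nabla u_k|/\lambda)\,dx<\infty$ for every $\lambda>0$---combined with the growth control from $(\phi_3 ')$, which can be used to dominate $\tilde{\Phi}_1(|\phi_1(|\nabla u_k|)\nabla u_k|/\lambda)$ by a constant multiple of $\Phi_1(C|\nabla u_k|/\lambda)$, yielding the desired integrability for every $\lambda>0$.
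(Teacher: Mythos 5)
Your approach is essentially the paper's: establish the identity on $W^{1}_0 E^{\Phi_1}(\Omega)$ directly from \eqref{4.03}, then transfer it to all of $W^{1,\Phi_1}_0(\Omega)$ via a weak$^*$-convergent approximating sequence with bounded norms, passing to the limit in the gradient term using $\phi_1(|\nabla u_k|)\tfrac{\partial u_k}{\partial x_i}\in E^{\tilde\Phi_1}(\Omega)$ and in the $R_u$ term using the compact embedding into $C(\overline\Omega)$. The ``main obstacle'' you flag is in fact automatic: $(\phi_3')$ gives $\ell_1>1$, which forces $\tilde\Phi_1\in(\Delta_2)$ and hence $E^{\tilde\Phi_1}(\Omega)=L^{\tilde\Phi_1}(\Omega)$, so the membership you need follows at once from $\tilde\Phi_1(\phi_1(t)t)\leq\Phi_1(2t)$ and $\nabla u_k\in E^{\Phi_1}(\Omega)$; the paper also avoids your explicit mollification-and-cutoff construction by simply invoking the definitional weak$^*$ density of $C^{\infty}_0(\Omega)$ in $W^{1,\Phi_1}_0(\Omega)$.
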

\begin{proof}From \ref{4.03},
	\begin{align}\label{2.8}
			J_{n}'(u_k,v_k)(\varphi,0)=o_k(1)\lVert \varphi\lVert_{1},\;\;\forall\varphi\in W^{1}_0E^{\Phi_1}(\Omega).
	\end{align}
By definition, the space $W^{1,\Phi_1}_0(\Omega)$ is the weak$^*$ closure of $C^{\infty}_0(\Omega)$ in $W^{1,\Phi_1}(\Omega)$, thus, given $\varphi\in W^{1,\Phi_1}_0(\Omega)$ there will be a sequence $(\varphi_m)$ in $C^{\infty}_0( \Omega)$ such that
\begin{align}\label{4.55}
\varphi_m \xrightharpoonup[\quad]{\ast}\varphi\; \text{\;in } W^{1,\Phi_1}_0(\Omega).
\end{align}
It is clear that $(\lVert \varphi_m\lVert_{1})$ is bounded in $\mathbb{R},$ so by \eqref{2.8},
		\begin{align*}
o_k(1)=	\int_{\Omega}\phi_1(|\nabla u_k|)\nabla u_k\nabla \varphi_mdx-\int_{\Omega}R_u(x,u_k,v_k)\varphi_mdx,\;\;\forall k\in\mathbb{N}
	\end{align*}
Using the fact that $ \phi_1(|\nabla u_k|)\frac{\partial u_k}{\partial x_i}\in E^{\tilde{\Phi}_1}(\Omega)$ along with the limit \eqref {4.55}, we will get
	\begin{align*}
\lim_{m\rightarrow \infty}\int_{\Omega}\phi_1(|\nabla u_k|)\nabla u_k\nabla \varphi_mdx=\int_{\Omega}\phi_1(|\nabla u_k|)\nabla u_k\nabla \varphi dx
\end{align*}
Therefore, since the spaces $ W^{1,\Phi_1}_0(\Omega)$, $W^{1,\Phi_2}_0(\Omega)$ are embedded in $ C(\overline{\Omega})$, we can conclude that
	\begin{align*}
	o_k(1)=	\int_{\Omega}\phi_1(|\nabla u_k|)\nabla u_k\nabla \varphi dx-\int_{\Omega}R_u(x,u_k,v_k)\varphi dx,\;\;\forall k\in\mathbb{N}.
	\end{align*}
\end{proof}

Before proceeding with the results, we need to make the following definitions:
\begin{itemize}
\item We will denote by $D(J_{\Phi_i})\subset W^{1,\Phi_i}_0(\Omega)$, the following set:
\begin{align*}
	D(J_{\Phi_i})=\left\{u\in W^{1,\Phi_i}_0(\Omega): \int_{\Omega}\Phi_i(|\nabla u|)dx<+\infty\right\}
\end{align*}
\item  We will denote by  $dom(\phi(t)t)\subset W^{1,\Phi}_0(\Omega)$, the following set:
\begin{align*}
dom(\phi_i(t)t)=\left\{u\in W^{1,\Phi_i}_0(\Omega): \int_{\Omega}\tilde{\Phi}_i\big(\phi_i(|\nabla u|)|\nabla u|\big)dx<+\infty\right\}
\end{align*}
\end{itemize}

\begin{lemma}\label{2.21}Let $(w_n)$ the sequence obtained in \eqref{4.01}. Then
	$(w_n)\subset D(J_{\Phi_1})\cap dom(\phi_1(t)t)$, furthermore, 
	\begin{align*}
	c_n=\lim_{k\rightarrow\infty} J_n(u_k, v_k)=J_n(w_n, y_n)
	\end{align*}
	and
		\begin{align}\label{2.60}
\begin{split}
\int_{\Omega} \Phi_1(|\nabla \varphi_1|)dx-\int_{\Omega} &\Phi_1(|\nabla w_n|)dx-\int_{\Omega} \phi_2(|\nabla y_n|)\nabla y_n \nabla(\varphi_2 -y_n)dx\\
\geq&\int_{\Omega}R_u(x,w_n,y_n)(\varphi_1 -w_n) dx+ \int_{\Omega}R_v(x,w_n,y_n)(\varphi_2 -y_n) dx,
\end{split}
	\end{align}
	for all $(\varphi_1,\varphi_2)\in W^{1,\Phi_1}_0(\Omega) \times V_n.$
\end{lemma}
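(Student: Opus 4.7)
My plan is to exploit the convexity of $\Phi_1$ together with the Palais--Smale identity in Lemma \ref{2.9}, taking advantage of the facts that $V_n$ is finite dimensional (so weak-$*$ convergence there is actually strong) and that the compact embedding $W^{1,\Phi_i}_0(\Omega)\hookrightarrow C(\overline{\Omega})$ tames the $R$-dependent terms. After passing to a subsequence, the limit \eqref{4.01} gives $u_k\xrightharpoonup[\quad]{\ast} w_n$ in $W^{1,\Phi_1}_0(\Omega)$ and $v_k\to y_n$ strongly in $V_n$, whence $\int_\Omega\Phi_2(|\nabla v_k|)\,dx\to \int_\Omega\Phi_2(|\nabla y_n|)\,dx$ and, by the compact embedding, $u_k\to w_n$ and $v_k\to y_n$ uniformly, so $R(x,u_k,v_k)$, $R_u(x,u_k,v_k)$ and $R_v(x,u_k,v_k)$ all converge uniformly to their values at $(w_n,y_n)$.

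For the core inequality \eqref{2.60}, convexity of $\Phi_1$ yields
\begin{align*}
\int_\Omega\Phi_1(|\nabla\varphi_1|)\,dx-\int_\Omega\Phi_1(|\nabla u_k|)\,dx \geq \int_\Omega\phi_1(|\nabla u_k|)\nabla u_k\cdot\nabla(\varphi_1-u_k)\,dx
\end{align*}
for every $\varphi_1\in W^{1,\Phi_1}_0(\Omega)$. Since $\varphi_1-u_k\in W^{1,\Phi_1}_0(\Omega)$, Lemma \ref{2.9} converts the right-hand side into $\int R_u(x,u_k,v_k)(\varphi_1-u_k)\,dx+o_k(1)$. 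For the $v$-slot, I test the Palais--Smale condition along the bounded direction $(0,\varphi_2-v_k)\in X_n$ to get $\int \phi_2(|\nabla v_k|)\nabla v_k\nabla(\varphi_2-v_k)\,dx+\int R_v(x,u_k,v_k)(\varphi_2-v_k)\,dx=o_k(1)$, which passes to equality in the limit thanks to strong convergence in $V_n$. Summing these two relations and using the Ekeland--Temam lower semicontinuity (as invoked in Lemma \ref{0.30}) to handle the $\liminf$ of $\int\Phi_1(|\nabla u_k|)\,dx$ produces \eqref{2.60}.

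To pin down the equalities $\int\Phi_1(|\nabla w_n|)\,dx=\lim_k\int\Phi_1(|\nabla u_k|)\,dx=:L$ and $J_n(w_n,y_n)=c_n$, I specialize the convexity step above to $\varphi_1=w_n$: the right-hand side now tends to zero (uniform convergence $u_k\to w_n$ against uniformly bounded $R_u$), which gives $\int\Phi_1(|\nabla w_n|)\,dx\geq L$; combined with the Ekeland--Temam bound $\int\Phi_1(|\nabla w_n|)\,dx\leq L$ this forces equality, so $w_n\in D(J_{\Phi_1})$. The identity $J_n(w_n,y_n)=c_n$ is then immediate from $J_n(u_k,v_k)\to c_n$ and the previous convergences.

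The remaining membership $w_n\in\mathrm{dom}(\phi_1(t)t)$ comes from testing Lemma \ref{2.9} with $\varphi=u_k$, yielding $\int\phi_1(|\nabla u_k|)|\nabla u_k|^2\,dx=\int R_u(x,u_k,v_k)u_k\,dx+o_k(1)$, uniformly bounded. By the Young-equality identity $\tilde\Phi_1(\phi_1(t)t)=t^2\phi_1(t)-\Phi_1(t)$, the sequence $(\phi_1(|\nabla u_k|)\nabla u_k)$ is bounded in $(L^{\tilde\Phi_1}(\Omega))^N$ and admits a weak-$*$ limit $Z$ with $\int\tilde\Phi_1(|Z|)\,dx<\infty$. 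I expect the main obstacle of the proof to be the identification $Z=\phi_1(|\nabla w_n|)\nabla w_n$, since in this non-reflexive setting weak-$*$ convergence does not yield a.e. convergence of gradients. I plan to extract this identification from a Minty/monotonicity argument anchored on \eqref{2.60} tested along small perturbations $\varphi_1=w_n\pm t\psi$, combined with the strict monotonicity of $t\mapsto t\phi_1(t)$ guaranteed by $(\phi_1')$; weak-$*$ lower semicontinuity of the Luxemburg norm then upgrades $Z\in(L^{\tilde\Phi_1})^N$ to $\phi_1(|\nabla w_n|)\nabla w_n\in(L^{\tilde\Phi_1})^N$, i.e.\ $w_n\in\mathrm{dom}(\phi_1(t)t)$.
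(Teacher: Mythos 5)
Your derivation of \eqref{2.60} and of the energy identity $c_n = J_n(w_n,y_n)$ essentially reproduces the paper's argument: convexity of $\Phi_1$ against the Palais--Smale identity of Lemma \ref{2.9}, strong convergence in the finite-dimensional slot $V_n$, weak$^*$ lower semicontinuity of the modular via [\citenum{ET}, Theorem 2.1, Chapter 8], and then specializing $\varphi_1 = w_n$ for the reverse inequality. That part is fine.

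The genuine gap is in the proof that $w_n \in \mathrm{dom}(\phi_1(t)t)$. Your plan extracts a weak$^*$ limit $Z$ of $\phi_1(|\nabla u_k|)\nabla u_k$ in $(L^{\tilde\Phi_1}(\Omega))^N$ and proposes to identify $Z = \phi_1(|\nabla w_n|)\nabla w_n$ by a Minty/monotonicity argument anchored on \eqref{2.60}. You acknowledge you do not execute this step, and in fact it is the hard point: a Minty argument here needs the weak formulation at $(w_n,y_n)$, but establishing that formulation already requires knowing that $\phi_1(|\nabla w_n|)|\nabla w_n| \in L^{\tilde\Phi_1}(\Omega)$ -- precisely what you are trying to prove -- so the differentiation of $t \mapsto \int\Phi_1(|\nabla w_n + t\nabla\psi|)\,dx$ at $t=0$ is not yet justified. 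This is circular without further work, and the non-reflexive setting rules out the usual $\Delta_2$-based shortcuts.

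The paper sidesteps identification of any weak$^*$ limit of $\phi_1(|\nabla u_k|)\nabla u_k$ altogether. The key unused hypothesis is the convexity part of $(\phi_1')$: $t \mapsto t^2\phi_1(t)$ is convex. After obtaining a uniform bound of the form $\sup_k \int_\Omega \phi_1(|(1-\theta_k/k)\nabla u_k|)(1-\theta_k/k)^2|\nabla u_k|^2\,dx \leq M$ (via the mean value theorem applied to the difference quotient with $\eta_1 = (1-\tfrac1k)u_k$ in \eqref{2.13}), the paper applies the Ekeland--Temam lower semicontinuity result to the convex integrand $\xi \mapsto \phi_1(|\xi|)|\xi|^2$ and the weak$^*$ convergence $(1-\theta_k/k)\nabla u_k \xrightharpoonup{\ast} \nabla w_n$ in $(L^{\Phi_1}(\Omega))^N$, obtaining directly $\int_\Omega \phi_1(|\nabla w_n|)|\nabla w_n|^2\,dx \leq M$. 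The Young equality $\phi_1(t)t^2 = \Phi_1(t) + \tilde\Phi_1(\phi_1(t)t)$ then splits this into the two finite pieces. Note that your own bound $\int_\Omega\phi_1(|\nabla u_k|)|\nabla u_k|^2\,dx \leq C$ (obtained by testing Lemma \ref{2.9} with $\varphi = u_k$) would in fact feed the same Ekeland--Temam argument applied directly to $\nabla u_k \xrightharpoonup{\ast} \nabla w_n$; you should use that lower semicontinuity at the level of the convex scalar integrand $\phi_1(|\cdot|)|\cdot|^2$ rather than detouring through the vector-valued sequence $\phi_1(|\nabla u_k|)\nabla u_k$ and its weak$^*$ limit.
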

\begin{proof}
%As $u_k \rightarrow w_n$ and $v_k \rightarrow y_n$ in $C(\overline{\Omega})$, then
%	\begin{align}\label{2.11}
%		\int_{\Omega}R_u(x,u_k,v_k)\varphi_1 dx\longrightarrow \int_{\Omega}R_u(x,w_n,y_n)\varphi_1 dx,\;\;\forall g_1\in  W^{1,\Phi_1}(\Omega),
%	\end{align}
%		\begin{align}
%	\int_{\Omega}R_v(x,u_k,v_k)\varphi_2 dx\longrightarrow \int_{\Omega}R_v(x,w_n,y_n)\varphi_2 dx,\;\;\forall g_2\in  V_n,
%	\end{align}
%	and 
%	\begin{align}
%	\int_{\Omega}R(x,u_k,v_k) dx\longrightarrow \int_{\Omega}R(x,w_n,y_n)dx.
%	\end{align}
%	Since  $(J_n(u_k,v_k))_{k\in\mathbb{N}}$ is bounded, we shall suppose that for some subsequence, the sequence $\Big(\int_{\Omega} \Phi_1(|\nabla u_k|)dx\Big)$ has limit which will be denoted by  $L$, that is,
%	\begin{align*}
%		\lim_{k\rightarrow\infty}\int_{\Omega}\Phi_1(|\nabla u_k|)dx=L.
%	\end{align*}
%	As $u_k \xrightharpoonup[\quad]{\ast}w_n$ in $W^{1,\Phi_1}_0(\Omega)$, follows from [\citenum{ET}, Theorem $2.1$, Chapter 8] that
%	\begin{align}\label{2.12}
%		\int_{\Omega}\Phi_1(|\nabla w_n|)dx\leq \lim_{k\rightarrow\infty}\int_{\Omega}\Phi_1(|\nabla u_k|)dx=L,
%	\end{align}
%Therefore $w_n\in D(J_{\Phi_1})$. Showing the first part of Lemma.                     
	
Using the fact that $J_n'(u_k,v_k) \rightarrow 0$ as $k\rightarrow\infty$ together with Lemma \ref{2.9}, we can conclude that
	\begin{align}\label{2.10}\begin{split}
\int_{\Omega}\phi_1(|\nabla u_k|)\nabla u_k\nabla \varphi_1dx-	\int_{\Omega}\phi_2(|\nabla v_k|)\nabla v_k\nabla \varphi_2dx=&\int_{\Omega}R_u(x,u_k,v_k)\varphi_1dx\\&+\int_{\Omega}R_v(x,u_k,v_k)\varphi_2dx+o_k(1),
	\end{split}
	\end{align}
	for each $(\varphi_1,\varphi_2)\in W^{1,\Phi_1} _0(\Omega)\times V_n$ and  $k\in\mathbb{N}$. Since $\Phi_1$ is convex, we have
		\begin{align*}
	\int_{\Omega} \Phi_1(|\nabla \eta_1|)dx-\int_{\Omega} &\Phi_1(|\nabla u_k|)dx\geq \int_{\Omega}\phi_1(|\nabla u_k|)\nabla u_k \nabla(\eta_1-u_k) dx,
	\end{align*}
for all $\eta_1\in W^{1,\Phi_1}_0(\Omega).$ Hence, considering $\varphi_1=\eta_1-u_k$ in \eqref{2.10} and using the inequality above, we get
		\begin{align}\label{2.13}
\begin{split}
	\int_{\Omega} \Phi_1(|\nabla \eta_1|)dx-\int_{\Omega} &\Phi_1(|\nabla u_k|)dx-\int_{\Omega} \phi_2(|\nabla v_k|)\nabla v_k \nabla \varphi_2dx\\
\geq&\int_{\Omega}R_u(x,u_k,v_k)(\eta_1 -u_k) dx+ \int_{\Omega}R_v(x,u_k,v_k)\varphi_2 dx+o_k(1),
\end{split}
	\end{align}
		for every $(\eta_1,\varphi_2)\in W^{1,\Phi_1}_0(\Omega) \times V_n$ and $k\in\mathbb{N}$. Since $u_k\xrightharpoonup[\quad]{\ast} w_n$ in $W^{1,\Phi_1}_0(\Omega)$ follows from [\citenum{ET}, Theorem $2.1$, Chapter 8] that
		\begin{align}\label{2.12}
		\int_{\Omega}\Phi_1(|\nabla w_n|)dx\leq \lim_{k\rightarrow\infty}\int_{\Omega}\Phi_1(|\nabla u_k|)dx,
		\end{align}
	
		Remember that $dim V_n=n$, so $v_k \rightarrow y_n$ in $V_n$. Hence,
		\begin{align}
	\begin{split}
	\int_{\Omega} \Phi_1(|\nabla \eta_1|)dx-\int_{\Omega} \Phi_1(|\nabla w_n|&)dx-\int_{\Omega} \phi_2(|\nabla y_n|)\nabla y_n\nabla \varphi_2dx\\
	\geq&\int_{\Omega}R_u(x,w_n,y_n)(\eta_1 -w_n) dx+ \int_{\Omega}R_v(x,w_n,y_n)\varphi_2 dx,
	\end{split}
	\end{align}
	for each $(\eta_1,\varphi_2)\in W^{1,\Phi_1}_0(\Omega) \times V_n$. Justifying the inequality \eqref{2.60}.

Considering $(\eta_1,\varphi_2)=(w_n,0)$ in the inequality \eqref{2.13}, we get
		\begin{align*}
	\begin{split}
	\int_{\Omega} \Phi_1(|\nabla w_n|)dx-\int_{\Omega} \Phi_1(|\nabla u_k|)dx
	\geq\int_{\Omega}R_u(x,u_k,v_k)(w_n -u_k) dx+o_k(1).
	\end{split}
	\end{align*}	
	Thus,
			\begin{align}\label{2.14}
		\int_{\Omega}\Phi_1(|\nabla w_n|)dx\geq \lim_{k\rightarrow\infty}\int_{\Omega}\Phi_1(|\nabla u_k|)dx.
		\end{align}
	Combining \eqref{2.12} and \eqref{2.14},
			\begin{align*}
		 \lim_{k\rightarrow\infty}\int_{\Omega}\Phi_1(|\nabla u_k|)dx=\int_{\Omega}\Phi_1(|\nabla w_n|)dx.
		\end{align*}
	Therefore, we can conclude that
		\begin{align*}
			c_n=\lim_{k\rightarrow\infty} J_n(u_k,v_k)= J_n (w_n,y_n).
		\end{align*}
		
	Finally, we will show that $w_n\in dom(\phi_1(t)t)$. By the inequality \eqref{2.13},
				\begin{align*}
		\int_{\Omega} \Phi_1(|\nabla u_k-\frac{1}{k}\nabla u_k|)dx-\int_{\Omega}\Phi_1(|\nabla u_k|)dx
		\geq-\dfrac{1}{k}\int_{\Omega}R_u(x,u_k,v_k)u_k dx+o_k(1),
		\end{align*}
	i.e,
			\begin{align*}
		\int_{\Omega}\dfrac{\big( \Phi_1(|\nabla u_k-\frac{1}{k}\nabla u_k|)- \Phi_1(|\nabla u_k|)\big)}{-\frac{1}{k}}dx\leq\int_{\Omega}R_u(x,u_k,v_k)u_k dx+o_k(1).
		\end{align*}
As $(u_k)$ and $(v_k)$ are bounded in $W^{1,\Phi_1}_0(\Omega)$ and $W^{1,\Phi_2}_0(\Omega)$, respectively, there will be $M>0$ such that
			\begin{align*}
		\int_{\Omega} \Phi_1(|\nabla u_k-\frac{1}{k}\nabla u_k|)dx-\int_{\Omega}\Phi_1(|\nabla u_k|)dx
		\leq M,\;\;\forall k\in\mathbb{N}. 
		\end{align*}
		Since $\Phi_1$ is in $ C^{1}$ class, there exists $\theta_k(x)\in [0,1]$ such that
		\begin{align*}
			 \dfrac{ \Phi_1(|\nabla u_k-\frac{1}{k}\nabla u_k|)- \Phi_1(|\nabla u_k|)}{-\frac{1}{k}}=\phi_1(|\big(1-\frac{\theta_k}{k}(x)\big)\nabla u_k|)\big(1-\frac{\theta_k(x)}{k}\big)|\nabla u_k|^2.
		\end{align*}
		Recalling that $0<1-\frac{\theta_k(x)}{k}\leq 1$, we know that $1-\frac{\theta_k(x)}{k}\geq \big(1-\frac{\theta_k(x)}{k}\big)^2$ which leads to
		\begin{align*}
			\int_{\Omega}\phi_1(|\big(1-\frac{\theta_k}{k}(x)\big)\nabla u_k|)\big(1-\frac{\theta_k(x)}{k}\big)^2|\nabla u_k|^2dx\leq M,\;\;\forall k\in\mathbb{N}. 
		\end{align*}
	As $\nabla u_k \xrightharpoonup[\quad]{\ast}\nabla w_n$ in $\big(L^{\Phi_1}(\Omega)\big)^{N-1}$, we also have $ \big (1-\frac{\theta_k(x)}{k}\big)\nabla u_k \xrightharpoonup[\quad]{\ast}\nabla w_n$ in $\big(L^{\Phi_1}(\Omega) \big)^{N-1}$ as $k\rightarrow \infty$. Then, by
	using the fact that $\phi_1(t)t^2$ is convex, we can apply [\citenum{ET}, Theorem $2.1$, Chapter 8] to get
			\begin{align*}
		\liminf_{k\rightarrow\infty}\int_{\Omega}\phi_1(|\big(1-\frac{\theta_k}{k}(x)\big)\nabla u_k|)\big(1-\frac{\theta_k(x)}{k}\big)^2|\nabla u_k|^2\geq \int_{\Omega}\phi_1(|\nabla w_n|)|w_n|^2 dx
		\end{align*}
	and so,
		\begin{align*}
			\int_{\Omega}\phi_1(|\nabla w_n|)|w_n|^2 dx\leq M.
		\end{align*}
	Recalling that
		\begin{align*}
			\phi_1(t)t^2=\Phi_1(t)+\tilde{\Phi }_1(\phi_1(t)t),\;\;\forall t\in\mathbb{R}
		\end{align*}
	we have
		\begin{align*}
			\phi_1(|\nabla w_n|)|\nabla w_n|^2=\Phi_1(|\nabla w_n|)+\tilde{\Phi }_1(\phi_1(|\nabla w_n|)|\nabla w_n|)
		\end{align*}
	which leads to
		\begin{align*}
		\int_{\Omega}\phi_1(|\nabla w_n|)|\nabla w_n|^2dx=\int_{\Omega}\Phi_1(|\nabla w_n|)dx+\int_{\Omega}\tilde{\Phi }_1(\phi_1(|\nabla w_n|)|\nabla w_n|^2)dx.
		\end{align*}	
Since $\int_{\Omega}\phi_1(|\nabla w_n|)|\nabla w_n|^2dx$ is finite, we see that $\int_{\Omega}\Phi_1(|\nabla w_n|)dx$ and $\int_{\Omega}\tilde{\Phi} _1(\phi_1(|\nabla w_n|)|\nabla w_n|^2)dx$ are also finite, showing that $w_n\in D(J_{\Phi_{1}})\cap dom (\phi_1(t)t)$. This finishes the proof.
		
\end{proof}

\begin{lemma}\label{2.20} For each $(\varphi_1,\varphi_2)\in W^{1,\Phi_1}_0(\Omega)\times V_n$, the following equality holds
		\begin{align*}
	\int_{\Omega}\phi_1(|\nabla w_n|)\nabla w_n \nabla \varphi_1dx-\int_{\Omega}\phi_2(|\nabla y_n|)\nabla y_n \nabla \varphi_2dx= \int_{\Omega}R_u(x,w_n,y_n)\varphi_1dx+\int_{\Omega}R_u(x,w_n,y_n)\varphi_2dx.
	\end{align*}
\end{lemma}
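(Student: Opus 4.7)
The target identity splits additively into the two independent pieces: (I) $-\int_{\Omega}\phi_{2}(|\nabla y_{n}|)\nabla y_{n}\cdot\nabla\psi\,dx=\int_{\Omega}R_{v}(x,w_{n},y_{n})\psi\,dx$ for every $\psi\in V_{n}$, and (II) $\int_{\Omega}\phi_{1}(|\nabla w_{n}|)\nabla w_{n}\cdot\nabla\eta\,dx=\int_{\Omega}R_{u}(x,w_{n},y_{n})\eta\,dx$ for every $\eta\in W^{1,\Phi_{1}}_{0}(\Omega)$. Identity (I) is read off from Lemma \ref{2.21} by plugging $\varphi_{1}=w_{n}$: the resulting variational inequality holds for every $\varphi_{2}\in V_{n}$ and is promoted to an equality by substituting $\varphi_{2}=y_{n}\pm t\psi$ with $\psi\in V_{n}$, $t>0$, and exploiting that $V_{n}$ is linear.

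For (II), the plan has three steps. First I establish $\nabla u_{k}\to\nabla w_{n}$ a.e. in $\Omega$. Applying Lemma \ref{2.9} with $\varphi=u_{k}$ and then $\varphi=w_{n}$ and using the compact embedding $W^{1,\Phi_{1}}_{0}(\Omega)\hookrightarrow C(\overline{\Omega})$ from \eqref{2.0}, both right-hand sides converge to $\int R_{u}(x,w_{n},y_{n})w_{n}\,dx$; subtracting gives $\int\phi_{1}(|\nabla u_{k}|)\nabla u_{k}\cdot\nabla(u_{k}-w_{n})\,dx\to 0$. The hypothesis $\ell_{1}>1$ in $(\phi_{3}')$ is equivalent to $\Phi_{1}\in\nabla_{2}$, hence $\tilde{\Phi}_{1}\in\Delta_{2}$ and $L^{\tilde{\Phi}_{1}}(\Omega)=E^{\tilde{\Phi}_{1}}(\Omega)$. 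Since Lemma \ref{2.21} gives $w_{n}\in\mathrm{dom}(\phi_{1}(t)t)$, we have $\phi_{1}(|\nabla w_{n}|)\nabla w_{n}\in E^{\tilde{\Phi}_{1}}(\Omega)$, and the weak-$\ast$ convergence $\nabla u_{k}\xrightharpoonup{\ast}\nabla w_{n}$ in $L^{\Phi_{1}}(\Omega)$ kills the cross term $\int\phi_{1}(|\nabla w_{n}|)\nabla w_{n}\cdot\nabla(u_{k}-w_{n})\,dx$. The Dal Maso--Murat theorem, applied as in Lemma \ref{0.30}, then delivers the desired a.e. convergence of gradients.

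Second, for any $\varphi\in C_{0}^{\infty}(\Omega)$ the sequence $\phi_{1}(|\nabla u_{k}|)\nabla u_{k}\cdot\nabla\varphi$ is controlled pointwise on $\mathrm{supp}\,\varphi$ by a multiple of $\phi_{1}(|\nabla u_{k}|)|\nabla u_{k}|$, which is bounded in $L^{\tilde{\Phi}_{1}}(\Omega)$ by the identity $\tilde{\Phi}_{1}(\phi_{1}(t)t)=t^{2}\phi_{1}(t)-\Phi_{1}(t)$ together with the uniform bounds on $\int\phi_{1}(|\nabla u_{k}|)|\nabla u_{k}|^{2}$ and $\int\Phi_{1}(|\nabla u_{k}|)$ established earlier. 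H\"older's inequality in Orlicz spaces combined with $\|\mathbf{1}_{E}\|_{L^{\Phi_{1}}}\to 0$ as $|E|\to 0$ yields equi-integrability in $L^{1}(\mathrm{supp}\,\varphi)$, and Vitali's theorem together with the a.e. convergence from step one produces $\int\phi_{1}(|\nabla u_{k}|)\nabla u_{k}\cdot\nabla\varphi\,dx\to\int\phi_{1}(|\nabla w_{n}|)\nabla w_{n}\cdot\nabla\varphi\,dx$. Combining this with Lemma \ref{2.9} and the uniform convergence of $R_{u}(\cdot,u_{k},v_{k})$ (from the $C(\overline{\Omega})$-convergence of $u_{k},v_{k}$) establishes (II) for test functions in $C_{0}^{\infty}(\Omega)$. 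Third, I extend (II) to arbitrary $\eta\in W^{1,\Phi_{1}}_{0}(\Omega)$ by picking $\eta_{m}\in C_{0}^{\infty}(\Omega)$ with $\eta_{m}\xrightharpoonup{\ast}\eta$ in $W^{1,\Phi_{1}}_{0}(\Omega)$ (weak-$\ast$ density of test functions): the compact embedding \eqref{2.0} gives $\int R_{u}\eta_{m}\to\int R_{u}\eta$, while $\nabla\eta_{m}\xrightharpoonup{\ast}\nabla\eta$ in $L^{\Phi_{1}}(\Omega)$ tests against $\phi_{1}(|\nabla w_{n}|)\nabla w_{n}\in E^{\tilde{\Phi}_{1}}(\Omega)$ through the duality $(E^{\tilde{\Phi}_{1}})'=L^{\Phi_{1}}$.

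The principal obstacle is step one: the a.e. convergence of gradients rests decisively on $\tilde{\Phi}_{1}\in\Delta_{2}$, which is what both places $\phi_{1}(|\nabla w_{n}|)\nabla w_{n}$ in the predual space $E^{\tilde{\Phi}_{1}}$ (so that weak-$\ast$ convergence kills the cross term) and legitimises the Dal Maso--Murat argument; without the strict inequality $\ell_{1}>1$ neither of these two ingredients would be available and the entire passage to the limit in the quasilinear term would break down.
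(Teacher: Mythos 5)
Your treatment of piece (I) — plug $\varphi_1 = w_n$ into the variational inequality \eqref{2.60} of Lemma~\ref{2.21} and exploit linearity of $V_n$ — is essentially what the paper does. For piece (II), however, you take a genuinely different route, and it is worth contrasting the two.

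The paper never returns to the $(PS)$ sequence $(u_k,v_k)$. Instead it works entirely at the level of the weak-$*$ limit $(w_n,y_n)$: it inserts the carefully normalised convex combination $v_\varepsilon=\tfrac{1}{1-\varepsilon/2}\bigl((1-\varepsilon)w_n+\varepsilon\varphi_1\bigr)$ into \eqref{2.60}, divides by $\varepsilon$, and sends $\varepsilon\to 0$. The point of the normalisation $1/(1-\varepsilon/2)$ is that $\Phi_1(|\nabla v_\varepsilon|)$ is then dominated by a convex combination of $\Phi_1(|\nabla w_n|)$ (integrable, by Lemma~\ref{2.21}) and $\Phi_1(2|\nabla\varphi_1|)$ (bounded, since $\varphi_1\in C_0^\infty$), so dominated convergence applies with no $\Delta_2$-type assumption. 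The $\varepsilon\to 0$ limit yields $\int\phi_1(|\nabla w_n|)\nabla w_n\cdot\nabla\varphi_1 - \int R_u\varphi_1 \geq A$ for a fixed constant $A$ and all $\varphi_1\in C_0^\infty(\Omega)$, which is promoted to an equality (with right-hand side zero) by the vector-space structure of $C_0^\infty(\Omega)$, then extended to $W^{1,\Phi_1}_0(\Omega)$ by weak-$*$ density using $\phi_1(|\nabla w_n|)|\nabla w_n|\in L^{\tilde\Phi_1}(\Omega)$. You, by contrast, go back to $(u_k,v_k)$ and re-run the Dal Maso--Murat / Vitali machine of Section~3's Lemma~\ref{0.30}. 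Your argument is correct — the bound on $\int\phi_1(|\nabla u_k|)|\nabla u_k|^2$ follows from Lemma~\ref{2.9} with $\varphi=u_k$ and the $C(\overline\Omega)$ bounds; the cross term dies because $\phi_1(|\nabla w_n|)\nabla w_n\in E^{\tilde\Phi_1}$ once one notes $\tilde\Phi_1\in\Delta_2$ (implied by, though not literally ``equivalent'' to, $\ell_1>1$; the $\nabla_2$-condition is a bit weaker); and equi-integrability plus Vitali handles the passage to the limit against $\nabla\varphi$. What the paper's approach buys is that it reuses the already-established inequality \eqref{2.60} directly and never needs a.e.\ convergence of gradients, equi-integrability estimates, or Dal Maso--Murat; your ``principal obstacle'' paragraph correctly diagnoses what your own route needs, but those needs evaporate under the paper's convexity argument. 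Both are correct; the paper's is leaner.
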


\begin{proof}
Given $\varepsilon\in(0,1/2)$ and $\varphi_1\in C^{\infty}_0(\Omega)$, we set the function
\begin{align*}
	v_\varepsilon=\dfrac{1}{1-\frac{\varepsilon}{2}}((1-\varepsilon)w_n+\varepsilon \varphi_1).
\end{align*}
Consider $\varphi_2\in V_n$ and apply $(v_\varepsilon,\varepsilon \varphi_2+y_n)$ on the inequality \eqref{2.60}, hence
\begin{align*}
\int_{\Omega} \Phi_1(|\nabla v_\varepsilon|)dx-\int_{\Omega} \Phi_1(|\nabla w_n|)dx&-\varepsilon\int_{\Omega} \phi_2(|\nabla y_n|)\nabla y_n \nabla \varphi_2dx\\
\geq&\int_{\Omega}R_u(x,w_n,y_n)(v_\varepsilon -w_n) dx+ \varepsilon\int_{\Omega}R_v(x,w_n,y_n)\varphi_2 dx,
\end{align*}
and so,
\begin{align*}
\dfrac{\int_{\Omega} \Phi_1(|\nabla v_\varepsilon|)dx-\int_{\Omega} \Phi_1(|\nabla w_n|)dx}{\varepsilon}&-\int_{\Omega} \phi_2(|\nabla y_n|)\nabla y_n \nabla \varphi_2dx\\
\geq\int_{\Omega}&R_u(x,w_n,y_n)\Big(\dfrac{v_\varepsilon -w_n}{\varepsilon}\Big) dx+ \int_{\Omega}R_v(x,w_n,y_n)\varphi_2 dx.
\end{align*}
Note that
\begin{align*}
\dfrac{\varepsilon \varphi_1}{1-\frac{\varepsilon}{2}}=2\left(1-\dfrac{1-\varepsilon}{1-\frac{\varepsilon}{2}}\right )\varphi_1,
\end{align*}
hence, by the convexity of $\Phi_1$,
\begin{align*}
	\Phi_1\Big(\dfrac{1}{1-\frac{\varepsilon}{2}}\big((1-\varepsilon)\nabla w_n+\varepsilon\nabla \varphi_1\big)\Big)\leq \dfrac{1-\varepsilon}{1-\frac{\varepsilon}{2}}\Phi_1(|\nabla w_n|)+\big(1-\dfrac{1-\varepsilon}{1-\frac{\varepsilon}{2}}\big)\Phi_1(2|\nabla \varphi_1|).
\end{align*}
Hence, by Lebesgue dominated convergence theorem, we get
 \begin{align}\label{2.15}
\begin{split}
 \int_{\Omega} \phi_1(|\nabla w_n|)\nabla w_n(\nabla w_n -\frac{\nabla w_n}{2})&dx-\int_{\Omega} \phi_2(|\nabla y_n|)\nabla y_n \nabla\varphi_2dx\\
\geq\int_{\Omega}&R_u(x,w_n,y_n)\Big(\varphi_1-\dfrac{w_n}{2}\Big) dx+ \int_{\Omega}R_v(x,w_n,y_n)\varphi_2 dx.
\end{split}
 \end{align}
Therefore
 \begin{align}
\int_{\Omega} \phi_1(|\nabla w_n|)\nabla w_n\nabla \varphi_1-\int_{\Omega}R_u(x,w_n,y_n)\varphi_1 dx\geq A,\;\;\;\forall \varphi_1\in C^{\infty}_0(\Omega),
\end{align}
where
\begin{align*}
	A=\dfrac{1}{2}\int_{\Omega} \phi_1(|\nabla w_n|)|\nabla w_n|^2dx-\dfrac{1}{2}\int_{\Omega}R_u(x,w_n,y_n)w_n dx.
\end{align*}
As $C^{\infty}_0(\Omega)$ is a vector space, the last inequality gives
 \begin{align}
\int_{\Omega} \phi_1(|\nabla w_n|)\nabla w_n\nabla \varphi_1-\int_{\Omega}R_u(x,w_n,y_n)\varphi_1 dx=0,\;\;\;\forall \varphi_1\in C^{\infty}_0(\Omega).
\end{align}
We know that $W^{1,\Phi_1}_0(\Omega)$ is the weak$^*$ closure of $C^{\infty}_0(\Omega)$ in $W^{1,\Phi_1}( \Omega)$, then using the fact that $\phi_1(|\nabla w_n|)|\nabla w_n|\in L^{\tilde{\Phi }_1}(\Omega)$ we can conclude that
\begin{align}\label{2.17}
\int_{\Omega} \phi_1(|\nabla w_n|)\nabla w_n\nabla \varphi_1 dx-\int_{\Omega}R_u(x,w_n,y_n)\varphi_1 dx=0,\;\;\forall \varphi_1\in W^{1,\Phi_1}_0(\Omega).
\end{align}

Still by \eqref{2.15}, we have
 \begin{align*}
-\int_{\Omega} \phi_2(|\nabla y_n|)\nabla y_n\nabla \varphi_2\geq\int_{\Omega}R_v(x,w_n,y_n)\varphi_2 dx,\;\;\forall \varphi_2\in V_n.
\end{align*}
Since $V_n$ is a vector space, the above inequality gives
 \begin{align}\label{2.16}
\int_{\Omega} \phi_2(|\nabla y_n|)\nabla y_n\nabla \varphi_2=-\int_{\Omega}R_v(x,w_n,y_n)\varphi_2 dx,\;\;\forall \varphi_2\in V_n.
\end{align}
From \eqref{2.17} and \eqref{2.16} 
 \begin{align*}
\int_{\Omega} \phi_2(|\nabla w_n|)\nabla w_n\nabla \varphi_1 dx-\int_{\Omega} \phi_2(|\nabla y_n|)\nabla y_n\nabla \varphi_2=\int_{\Omega}R_u(x,w_n,y_n)\varphi_1 dx+\int_{\Omega}R_v(x,w_n,y_n)\varphi_2 dx,
\end{align*}
for any $(\varphi_1,\varphi_2)\in W^{1,\Phi_2}_0(\Omega)\times V_n.$

\end{proof}

\begin{lemma}
	The sequence $(w_n,y_n)$ is bounded in $X$.
\end{lemma}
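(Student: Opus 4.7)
The plan is to mimic at the level of the weak$^{*}$ limits $(w_n,y_n)$ the Ambrosetti--Rabinowitz manipulation already carried out for the $(PS)$-sequence $(u_k,v_k)$ in Lemma~\ref{4.02}. Two preparatory ingredients are needed. First, the critical values $c_n$ themselves must be bounded independently of $n$. This I would obtain from the choice of the radius $M$ in Corollary~\ref{cr1} (which does not depend on $n$) together with $(R_3')$, the Poincaré inequality and the embedding $W^{1,\Phi_2}_0(\Omega)\hookrightarrow C(\overline{\Omega})$: for any $v\in V_n$ with $\|v\|_2\le M$,
\begin{equation*}
J_n(0,v)=-\int_{\Omega}\Phi_2(|\nabla v|)dx-\int_{\Omega}R(x,0,v)dx\le-\tfrac12\int_{\Omega}\Phi_2(|\nabla v|)dx+M\Lambda_2|\Omega|\,\|v\|_2\le M^2\Lambda_2|\Omega|,
\end{equation*}
so $c_n\le\max_{\mathcal M_n}J_n\le C$ with $C$ independent of $n$.

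Second, I would build the auxiliary test function $g_n(x):=\eta(w_n(x))=w_n\,h(w_n)$ with $h(t)=\Phi_1(t)/(t^2\phi_1(t))$, so that $\nabla g_n=S(w_n)\nabla w_n$ with $S(t)=1-h(t)(1+t\phi_1'(t)/\phi_1(t))$. Hypothesis $(\phi_5')$ gives $|S|\le 1$, so $|\nabla g_n|\le|\nabla w_n|$ pointwise and therefore $g_n\in W^{1,\Phi_1}_0(\Omega)$; together with $y_n\in V_n$, the pair $(g_n/\mu,\,y_n/\nu)$ is an admissible test pair in the identity of Lemma~\ref{2.20} (and similarly $(g_n/\mu,0)$). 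Lemma~\ref{2.21} additionally guarantees the integrability of $\phi_1(|\nabla w_n|)|\nabla w_n|^2$, so every term that will appear is finite.

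Now I proceed in parallel with Lemma~\ref{4.02}. Testing the identity of Lemma~\ref{2.20} with $(g_n/\mu,\,y_n/\nu)$, subtracting from $J_n(w_n,y_n)=c_n$, using $(\phi_3')$ on the $\phi_2$-term and discarding the nonnegative remainder via $(R_4')(i)$, gives
\begin{equation}\label{plan.A}
c_n\ge\int_{\Omega}\Phi_1(|\nabla w_n|)dx-\frac{1}{\mu}\int_{\Omega}\phi_1(|\nabla w_n|)|\nabla w_n|^2 S(w_n)dx+\Bigl(\frac{\ell_2}{\nu}-1\Bigr)\int_{\Omega}\Phi_2(|\nabla y_n|)dx.
\end{equation}
Testing instead against $(g_n/\mu,0)$ and combining with $-\beta J_n(w_n,y_n)=-\beta c_n$, then using $(R_4')(ii)$ to eliminate the $R$-terms, yields
\begin{equation}\label{plan.B}
-\frac{1}{\mu}\int_{\Omega}\phi_1(|\nabla w_n|)|\nabla w_n|^2 S(w_n)dx\ge\beta c_n-\beta\int_{\Omega}\Phi_1(|\nabla w_n|)dx+\beta\int_{\Omega}\Phi_2(|\nabla y_n|)dx.
\end{equation}
Substituting \eqref{plan.B} into \eqref{plan.A} the unsigned $S$-integral drops out and I obtain
\begin{equation*}
(1-\beta)c_n\ge(1-\beta)\int_{\Omega}\Phi_1(|\nabla w_n|)dx+\Bigl(\frac{\ell_2}{\nu}-1+\beta\Bigr)\int_{\Omega}\Phi_2(|\nabla y_n|)dx.
\end{equation*}
Since $c_n$ is uniformly bounded and both coefficients on the right are positive, the modulars $\int\Phi_1(|\nabla w_n|)dx$ and $\int\Phi_2(|\nabla y_n|)dx$ are bounded uniformly in $n$; convexity of $\Phi_i$ combined with the definition of the Luxemburg norm then upgrades this to boundedness of $\|w_n\|_1$ and $\|y_n\|_2$, as claimed.

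The main obstacle I anticipate is exactly the $S$-integral in \eqref{plan.A}. Since $\Phi_1\notin(\Delta_2)$ in general, one cannot directly compare $\int\phi_1(|\nabla w_n|)|\nabla w_n|^2dx$ and $\int\Phi_1(|\nabla w_n|)dx$, and $S(w_n)$ carries no fixed sign. The trick that makes the argument work is to absorb this term \emph{on both sides} by using, in a coupled way, the two one-sided inequalities $(R_4')(i)$ and $(R_4')(ii)$ of the generalized Ambrosetti--Rabinowitz condition, with the function $g_n=w_nh(w_n)$ tailored to the function $S$ appearing in $(\phi_5')$; without $(\phi_5')$ (to ensure $g_n\in W^{1,\Phi_1}_0(\Omega)$) and without the two-sided $(R_4')$ (to control both occurrences of the $S$-integral) this boundedness seems out of reach.
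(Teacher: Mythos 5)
Your proof follows essentially the same route as the paper's: build the auxiliary test function $g_n=\eta(w_n)$ with $|S|\leq 1$ ensured by $(\phi_5')$, test the critical-point identity of Lemma~\ref{2.20} with $(g_n/\mu,y_n/\nu)$ and with $(g_n/\mu,0)$, eliminate the unsigned $S$-integral by combining the two Ambrosetti--Rabinowitz inequalities $(R_4')(i)$ and $(R_4')(ii)$, and conclude from the uniform bound on $c_n$ (which you correctly trace to the $n$-independent radius $M$ in Corollary~\ref{cr1}, the Poincar\'e inequality, and $(R_3')$). The only cosmetic differences are that you retain the nonnegative $\beta\int\Phi_2$ term in your (plan.B), yielding the marginally sharper coefficient $\ell_2/\nu-1+\beta$, and that you spell out the boundedness of $(c_n)$ which the paper leaves implicit by citing Lemma~\ref{lemma21}.
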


\begin{proof}
	Consider the sequence
	\begin{align*}
	g_n(x)=\eta(w_n(x)),\;\;x\in\Omega.
	\end{align*}
where $\eta$ is given in Lemma \ref{4.02}. A direct computation leads to
	\begin{align*}
	\nabla g_n=\left[1-\dfrac{\Phi_1(w_n)}{w_n^2\phi_1(w_n)}\Big(1+\dfrac{w_n\phi_1 '(w_n)}{\phi_1(w_n)}\Big)\right]\nabla w_n.
	\end{align*}
	The last identity together with  $(\phi_{4}')$ implies that
	\begin{align}\label{2.18}
		|\nabla g_n|\leq |\nabla w_n|, \;\;\forall n\in\mathbb{N}
	\end{align}
On the other hand, $(\phi_3')$ also gives
		\begin{align}\label{2.19}
	| g_n(x)|\leq \frac{1}{\ell_1}|\nabla w_n(x)|, \;\;\forall x\in\Omega.
	\end{align}
From \eqref{2.18} and \eqref{2.19}, $g_n\in D(J_{\Phi_1})$ with
	\begin{align*}
	\lVert g_n\lVert_{1}\leq \lVert w_n\lVert_{1},\;\forall n\in \mathbb{N}.
	\end{align*}
By the Lemmas \ref{2.21} and \ref{2.20},
	\begin{align*}
	\begin{split}
	c_n
	=\int_{\Omega}\Phi_1(|\nabla w_n|)dx-&\dfrac{1}{\mu}\int_{\Omega}\phi_1(|\nabla w_n|)|\nabla w_n|^2S(w_n)dx-\int_{\Omega}\Phi_2(|\nabla y_n|)dx+\dfrac{1}{\nu}\int_{\Omega}\phi_2(|\nabla y_n|)|\nabla y_n|^2dx\\
	+\dfrac{1}{\mu}\int_{\Omega}R_u(x,w_n,&v_k)w_nh(w_n)dx+\dfrac{1}{\nu}\int_{\Omega}R_v(x,w_n,y_n)y_ndx-\int_{\Omega}R(x,w_n,y_n)dx,\\
	\end{split}
	\end{align*}
	where $h(t)=\frac{\Phi_1(t)}{t^2\phi_1(t)}$ and $S(t)=1-\frac{\Phi(t)}{t^2\phi_1(t)}\Big[1+\frac{t\phi_1 '(t)}{\phi_1(t)}\Big]$. By $(R_4 ')(i)$ together with $(\phi_3')$,
\begin{align}\label{2.22}
		c_n\geq \int_{\Omega}\Phi_1(|\nabla w_n|)dx-\dfrac{1}{\mu}\int_{\Omega}\phi_1(|\nabla w_n|)|\nabla w_n|^2S(w_n)dx+\left(\dfrac{\ell_2}{\nu}-1\right)\int_{\Omega}\Phi_2(|\nabla y_n|)dx.
		\end{align}
	
On the other hand, the Lemmas \ref{2.21} and \ref{2.20} together with $(R_4 ')(ii)$ imply that
	\begin{align*}
	\begin{split}
	-\beta c_n=&-\beta\int_{\Omega}\Phi_1(|\nabla w_n|)dx+\dfrac{1}{\mu}\int_{\Omega}\phi_1(|\nabla w_n|)|\nabla w_n|^2S(w_n)dx+\beta\int_{\Omega}\Phi_2(|\nabla y_n|)dx\\
	&-\dfrac{1}{\mu}\int_{\Omega}R_u(x,w_n,y_n)w_n h(w_n)dx+\beta\int_{\Omega}R(x,w_n,y_n)dx,\\
	\geq&-\beta\int_{\Omega}\Phi_1(|\nabla w_n|)dx+\dfrac{1}{\mu}\int_{\Omega}\phi_1(|\nabla w_n|)|\nabla w_n|^2S(w_n)dx,
	\end{split}
	\end{align*}
i.e,
\begin{align}\label{2.23}
-\dfrac{1}{\mu}\int_{\Omega}\phi_1(|\nabla w_n|)|\nabla w_n|^2S(w_n)dx\geq \alpha c_n-\alpha\int_{\Omega} \Phi_1(|\nabla w_n|)dx.
\end{align}
From \eqref{2.22} and \eqref{2.23},
	{\small	\begin{align*}
		(1-\beta)c_n\geq (1-\beta)\int_{\Omega}\Phi_1(|\nabla w_n|)dx+\left(\dfrac{\ell_2}{\nu}-1\right)\int_{\Omega}\Phi_2(|\nabla y_n|)dx.
		\end{align*}}
As a consequence of Lemma \ref{lemma21}, we have that $(c_n)$ is bounded. Therefore the sequences $\big( \int_{\Omega}\Phi_1(|\nabla w_n|)dx\big)$ and $\big(\int_{\Omega}\Phi_2(|\nabla y_n|)dx \big )$ are bounded and consequently $(w_n,y_n)$ is bounded at $ W^{1,\Phi_1}_0(\Omega)\times W^{1,\Phi_2}_0(\Omega)$.
	
\end{proof}

Since $(w_n,y_n)$ is bounded, there is no loss of generality in assuming that
\begin{align}\label{2.39}
(w_n,y_n)\xrightharpoonup[\quad]{\ast} (u,v)\;\;\text{ in }W^{1,\Phi_1}_0(\Omega)\times W^{1,\Phi_2}_0(\Omega)\;\text{ as }\; n\rightarrow \infty.
\end{align}
%\begin{equation}\label
%\left\{\;\begin {aligned}
%w_n \xrightharpoonup[\quad]{\ast}u&\;\; \text{weakly\;in } W^{1,\Phi}_0(\Omega)& \\\vartheta_n \xrightharpoonup[\quad]{\ast}v&\;\; \text{weakly\;in } W^{1,\Psi}_0(\Omega)
%\end{aligned}
%\right..
%\end{equation}
By [\citenum{ET}, Theorem $2.1$, Chapter 8], we can conclude that it is worth
\begin{align}\label{2.25}
	\int_{\mathbb{R}^{N}} \Phi_1(|\nabla u|)dx\leq\displaystyle \liminf_{n\rightarrow\infty }\int_{\mathbb{R}^{N}} \Phi_1(|\nabla w_n|)dx
\end{align}
and
\begin{align}\label{2.26}
\int_{\mathbb{R}^{N}} \Phi_2(|\nabla v|)dx\leq\displaystyle \liminf_{n\rightarrow\infty }\int_{\mathbb{R}^{N}} \Phi_2(|\nabla y_n|)dx.
\end{align}

\begin{proposition}
The pair $(u, v)$ is a nontrivial solution of $(S_2)$.
\end{proposition}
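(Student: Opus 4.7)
The plan is to establish that $(u,v)$ is a weak solution of $(S_2)$ and then verify nontriviality. The starting point is the identity from Lemma \ref{2.20}, valid for every $n$ and every $(\varphi_1,\varphi_2)\in W^{1,\Phi_1}_0(\Omega)\times V_n$. Fixing $k\in\mathbb{N}$ and choosing $n\geq k$ so that $V_k\subset V_n$, the identity holds on $W^{1,\Phi_1}_0(\Omega)\times V_k$ for all $n\geq k$, so one can let $n\to\infty$. After the weak formulation is verified on this family of test pairs, I would extend from test functions in $\bigcup_k V_k$ to all of $W^{1}_0 E^{\Phi_2}(\Omega)$ using the density \eqref{2.1}, and then to $W^{1,\Phi_2}_0(\Omega)$ via the weak$^{\ast}$ density of $C^{\infty}_0(\Omega)$, once $\phi_2(|\nabla v|)\nabla v\in L^{\tilde{\Phi}_2}(\Omega)$ has been recovered by a Lemma \ref{2.21}-style argument applied to the limit pair $(u,v)$.

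Passing to the limit on the right-hand side is the easy part: the compact embeddings $W^{1,\Phi_i}_0(\Omega)\hookrightarrow C(\overline{\Omega})$ in \eqref{2.0} combined with \eqref{2.39} yield $w_n\to u$ and $y_n\to v$ uniformly on $\overline{\Omega}$, so by continuity of $R_u,R_v$ given by $(R_1')$, both integrals $\int_{\Omega}R_u(x,w_n,y_n)\varphi_1\,dx$ and $\int_{\Omega}R_v(x,w_n,y_n)\varphi_2\,dx$ converge to the corresponding ones with $(u,v)$. The left-hand side is the obstacle, since strong convergence in $W^{1,\Phi_1}_0(\Omega)$ is unavailable (no $\Delta_2$-condition on $\Phi_1$). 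My plan is to imitate the argument of Lemma \ref{2.20}: combine the convex inequality $\int_{\Omega}\Phi_1(|\nabla\eta|)dx-\int_{\Omega}\Phi_1(|\nabla w_n|)dx\geq\int_{\Omega}\phi_1(|\nabla w_n|)\nabla w_n\cdot\nabla(\eta-w_n)dx$ with the lower semicontinuity bounds \eqref{2.25}--\eqref{2.26} and the already-established Lemma \ref{2.20} identity to pass to a variational inequality for $(u,v)$ analogous to \eqref{2.60}; then test with the convex combination $v_\varepsilon=\tfrac{1}{1-\varepsilon/2}\bigl((1-\varepsilon)u+\varepsilon\varphi_1\bigr)$ and let $\varepsilon\to 0^{+}$ to upgrade the inequality to an equality. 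This produces the weak formulation of $(S_2)$ on $W^{1,\Phi_1}_0(\Omega)\times V_k$, and the density steps above then yield it on the full test space.

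For nontriviality, I would argue by contradiction. Suppose $v\equiv 0$; then the weak formulation of the second equation of $(S_2)$ gives $\int_{\Omega}R_v(x,u,0)\varphi_2\,dx=0$ for every $\varphi_2\in W^{1,\Phi_2}_0(\Omega)$, which forces $R_v(x,u,0)=0$ almost everywhere on $\Omega$. This directly contradicts hypothesis $(R_1')$, which states that $R_v(x,u,0)\neq 0$ for all $(x,u)\in\Omega\times\mathbb{R}$. Hence $v\not\equiv 0$ and $(u,v)$ is a nontrivial weak solution of $(S_2)$. The main difficulty throughout is the passage to the limit in the principal part: because $\Phi_1$ may fail the $\Delta_2$-condition, one cannot appeal to strong $W^{1,\Phi_1}_0$-convergence nor to Dal Maso--Murat-style almost-everywhere gradient convergence; the whole argument relies on the convexity of $\Phi_i$ and on carefully reusing the Lemma \ref{2.20}--Lemma \ref{2.21} machinery at the level of the limit pair.
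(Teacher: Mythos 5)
Your proposal follows essentially the same route as the paper's proof: start from the Euler--Lagrange identity of Lemma \ref{2.20} on $X_k'$ with $n\geq k$, convert the principal part into a convexity inequality, pass to the limit using the lower semicontinuity bounds \eqref{2.25}--\eqref{2.26} together with uniform convergence from the compact embedding \eqref{2.0}, upgrade the resulting variational inequality to an equality via the Minty-type convex-combination trick of Lemma \ref{2.20}, and finally extend the test space by the density \eqref{2.1} and the weak$^*$ density of $C^\infty_0(\Omega)$ once $\phi_i(|\nabla\cdot|)\nabla\cdot\in L^{\tilde\Phi_i}(\Omega)$ has been recovered for the limit pair by the difference-quotient argument of Lemma \ref{2.21}. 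Your nontriviality argument (testing the $v$-equation with $v\equiv 0$ to contradict $(R_1')$) is exactly the intended reading of the paper's final one-line remark, so the proposal is correct and matches the paper's strategy.
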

\begin{proof}
Fixing  $k,n\in\mathbb{N}$ with $n\geq k$, we have $X_k '\subset X_n '$. Thus, for $(\varphi_1,\varphi_2)\in X_k '$, it follows from Lemma \ref{2.20} that
{\small\begin{align}\label{2.30}
\begin{split}
\int_{\Omega}\phi_1(|\nabla w_n|)\nabla w_n \nabla \varphi_1dx-\int_{\Omega}\phi_2(|\nabla y_n|)\nabla y_n \nabla \varphi_2dx=& \int_{\Omega}R_u(x,w_n,y_n)\varphi_1dx\\&+\int_{\Omega}R_v(x,w_n,y_n)\varphi_2dx,\;\;\forall n\geq k.
\end{split}
\end{align}}
By the above equality together with the convexity of $\Phi_1$, we will obtain
\begin{align}\label{2.28}
\int_{\Omega}\Phi_1(|\nabla \varphi_1|)dx-\int_{\Omega}\Phi_1(|\nabla w_n|)dx\geq \int_{\Omega}R_u(x,w_n,y_n)(\varphi_1-w_n)dx,\;\;\forall \varphi_1\in W^{1,\Phi_1}_0 (\Omega).
\end{align}
From this inequality, we can conclude that
%\begin{align}\label{2.27}
%\int_{\Omega}\Phi(|\nabla w_n|)dx\leq \int_{\Omega}R_u(x,w_n,y_n)w_ndx,\;\;\forall n\geq k.
%\end{align}
%then from \eqref{2.27} and \eqref{2.25}
%\begin{align}
%\int_{\Omega}\Phi(|\nabla u|)dx\leq\liminf_{n\rightarrow\infty }\int_{\Omega}\Phi(|\nabla w_n|)dx\leq\int_{\Omega}R_u(x,u,v)udx,
%\end{align}
%showing that $u\in D(J_\Phi)$. Ainda pela inequality \eqref{2.28},
\begin{align*}
\int_{\Omega} \Phi_1(|\nabla w_n-\frac{1}{n}\nabla w_n|)dx-\int_{\Omega}\Phi_1(|\nabla w_n|)dx
\geq-\dfrac{1}{n}\int_{\Omega}R_u(x,w_n,y_n)w_n dx,
\end{align*}
i.e,
\begin{align*}
\int_{\Omega}\dfrac{\big( \Phi_1(|\nabla w_n-\frac{1}{n}\nabla w_n|)- \Phi_1(|\nabla w_n|)\big)}{-\frac{1}{n}}dx\leq\int_{\Omega}R_u(x,w_n,y_n)w_n dx.
\end{align*}
As $(w_n)$ and $(y_n)$ are bounded in $W^{1,\Phi_1}_0(\Omega)$ and $W^{1,\Phi_2}_0(\Omega)$, respectively, there will be $M>0$ such that
\begin{align*}
\int_{\Omega} \frac{\Phi_1(|\nabla w_n-\frac{1}{n}\nabla w_n|)-\Phi_1(|\nabla w_n|)}{-\frac{1}{n}}dx
\leq M,\;\;\forall n\in\mathbb{N}. 
\end{align*}
Since $\Phi_1$ is in $ C^{1}$ class, there exists $\theta_n(x)\in [0,1]$ such that
\begin{align*}
\dfrac{ \Phi_1(|\nabla w_n-\frac{1}{n}\nabla w_n|)- \Phi_1(|\nabla w_n|)}{-\frac{1}{n}}=\phi_1(|\big(1-\frac{\theta_n(x)}{n}\big)\nabla w_n|)\big(1-\frac{\theta_n(x)}{n}\big)|\nabla w_n|^2.
\end{align*}
Recalling that $0<1-\frac{\theta_n(x)}{n}\leq 1$, we know that $1-\frac{\theta_n(x)}{n}\geq \big(1-\frac{\theta_n(x)}{n}\big)^2$ which leads to
\begin{align*}
\int_{\Omega}\phi_1(|\big(1-\frac{\theta_n(x)}{n}\big)\nabla w_n|)\big(1-\frac{\theta_n(x)}{n}\big)^2|\nabla w_n|^2dx\leq M,\;\;\forall n\in\mathbb{N}. 
\end{align*}
As $\nabla w_n \xrightharpoonup[\quad]{\ast}\nabla u
$ in $\big(L^{\Phi_1}(\Omega)\big)^{N-1}$, we also have $ \big (1-\frac{\theta_n(x)}{n}\big)\nabla w_n \xrightharpoonup[\quad]{\ast}\nabla u$ in $\big(L^{\Phi_1}(\Omega) \big)^{N-1}$ as $n\rightarrow \infty$. Then, by
using the fact that $\phi_1(t)t^2$ is convex, we can apply [\citenum{ET}, Theorem $2.1$, Chapter 8] to get
\begin{align*}
\liminf_{n\rightarrow\infty}\int_{\Omega}\phi_1(|\big(1-\frac{\theta_n(x)}{n}\big)\nabla w_n|)\big(1-\frac{\theta_n(x)}{n}\big)^2|\nabla w_n|^2\geq \int_{\Omega}\phi_1(|\nabla u|)|\nabla u|^2 dx
\end{align*}
and so,
\begin{align*}
\int_{\Omega}\phi_1(|\nabla w_n|)|w_n|^2 dx\leq M.
\end{align*}
Recalling that
\begin{align*}
\phi_1(t)t^2=\Phi_1(t)+\tilde{\Phi }_1(\phi_1(t)t),\;\;\forall t\in\mathbb{R}
\end{align*}
we have

\begin{align*}
\phi_1(|\nabla w_n|)|\nabla w_n|^2=\Phi_1(|\nabla w_n|)+\tilde{\Phi }_1(\phi_1(|\nabla w_n|)|\nabla w_n|)
\end{align*}
which leads to
\begin{align*}
\int_{\Omega}\phi_1(|\nabla w_n|)|\nabla w_n|^2dx=\int_{\Omega}\Phi_1(|\nabla w_n|)dx+\int_{\Omega}\tilde{\Phi }_1(\phi_1(|\nabla w_n|)|\nabla w_n|^2)dx.
\end{align*}
Since $\int_{\Omega}\phi_1(|\nabla u|)|\nabla u|^2dx$ is finite, we see that $\int_{\Omega}\Phi_1(|\nabla u|)dx$ and $\int_{\Omega}\tilde{\Phi }_1(\phi_1(|\nabla u|)|\nabla u|^2)dx$ are also finite, showing that $u\in D(J_{\Phi_1})$ and $u\in dom (\phi_1(t)t)$. Furthermore, it follows from \eqref{2.25} and \eqref{2.28} that
\begin{align}\label{2}
\int_{\Omega}\Phi_1(|\nabla \varphi_1|)dx-\int_{\Omega}\Phi_1(|\nabla u|)dx\geq \int_{\Omega}R_u(x,u,v) (\varphi_1-u)dx,\;\;\forall \varphi_1\in W^{1,\Phi_1}_0 (\Omega).
\end{align}
%
%Além disso, we also have
%\begin{align*}
%\int_{\Omega}\Phi(|\nabla u|)dx-\int_{\Omega}\Phi(|\nabla w_n|)dx\geq \int_{\Omega}R_u(x,w_n,y_n)(u-w_n)dx,\;\;\forall n\geq k.
%\end{align*}
%
%
%
%
%Ainda pela desigualdade \eqref
%{2.28} junto com o limite \eqref{2.25}, podemos concluir que
%\begin{align}\label{221}
%\int_{\Omega}\Phi(|\nabla\varphi_1|)dx-\int_{\Omega}\Phi(|\nabla u|)dx\geq \int_{\Omega}R_u(x,u,v)(\varphi_1-u)dx,\;\;\forall \varphi_1\in W^{1,\Phi}_0 (\Omega).
%\end{align}

%Portanto,
%\begin{align}\label{2.29}
%\int_{\Omega} \Phi(|\nabla u|)dx\geq\displaystyle \limsup_{n\rightarrow\infty }\int_{\Omega} \Phi(|\nabla w_n|)dx.
%\end{align}
%Combining \eqref{2.25} with \eqref{2.29}, we get
%\begin{align}\label{2.41}
%\lim_{n\rightarrow\infty }\int_{\Omega} \Phi(|\nabla w_n|)dx=\int_{\Omega} \Phi(|\nabla u|)dx
%\end{align}
On the other hand, it follows from the equality \eqref{2.30} that
\begin{align}\label{2.32}
\int_{\Omega}\Phi_2(|\nabla \varphi_2|)dx-\int_{\Omega}\Phi_2(|\nabla y_n|)dx\geq- \int_{\Omega}R_v(x,w_n,y_n)(\varphi_2-y_n)dx,\;\;\forall \varphi_2\in V_k.
\end{align}
From this inequality, we can conclude that
%\begin{align}\label{2.27}
%\int_{\Omega}\Phi(|\nabla w_n|)dx\leq \int_{\Omega}R_u(x,w_n,y_n)w_ndx,\;\;\forall n\geq k.
%\end{align}
%then from \eqref{2.27} and \eqref{2.25}
%\begin{align}
%\int_{\Omega}\Phi(|\nabla u|)dx\leq\liminf_{n\rightarrow\infty }\int_{\Omega}\Phi(|\nabla w_n|)dx\leq\int_{\Omega}R_u(x,u,v)udx,
%\end{align}
%showing that $u\in D(J_\Phi)$. Ainda pela inequality \eqref{2.28},
\begin{align*}
\int_{\Omega} \Phi_2(|\nabla y_n-\frac{1}{n}\nabla w_n|)dx-\int_{\Omega}\Phi_2(|\nabla y_n|)dx
\geq\dfrac{1}{n}\int_{\Omega}R_v(x,w_n,y_n)y_n dx,
\end{align*}
i.e,
\begin{align*}
\int_{\Omega}\dfrac{\Phi_2(|\nabla y_n-\frac{1}{n}\nabla y_n|)- \Phi_2(|\nabla y_n|)}{-\frac{1}{n}}dx\leq-\int_{\Omega}R_v(x,w_n,y_n)y_n dx.
\end{align*}
As $(w_n)$ and $(y_n)$ are bounded in $W^{1,\Phi_1}_0(\Omega)$ and $W^{1,\Phi_2}_0(\Omega)$, respectively, there will be $M>0$ such that
\begin{align*}
\int_{\Omega} \frac{\Phi_2(|\nabla y_n-\frac{1}{n}\nabla y_n|)-\Phi_2(|\nabla y_n|)}{-\frac{1}{n}}dx
\leq M,\;\;\forall n\in\mathbb{N}. 
\end{align*}
Since $\Phi_2$ is in $ C^{1}$ class, there exists $\theta_n(x)\in [0,1]$ such that
\begin{align*}
\dfrac{ \Phi_2(|\nabla y_n-\frac{1}{n}\nabla y_n|)- \Phi_2(|\nabla y_n|)}{-\frac{1}{n}}=\phi_2(|\big(1-\frac{\theta_n(x)}{n}\big)\nabla y_n|)\big(1-\frac{\theta_n(x)}{n}\big)|\nabla y_n|^2.
\end{align*}
Recalling that $0<1-\frac{\theta_n(x)}{n}\leq 1$, we know that $1-\frac{\theta_n(x)}{n}\geq \big(1-\frac{\theta_n(x)}{n}\big)^2$ which leads to
\begin{align*}
\int_{\Omega}\phi_2(|\big(1-\frac{\theta_n(x)}{n}\big)\nabla y_n|)\big(1-\frac{\theta_n(x)}{n}\big)^2|\nabla y_n|^2dx\leq M,\;\;\forall n\in\mathbb{N}. 
\end{align*}
As $\nabla y_n \xrightharpoonup[\quad]{\ast}\nabla v
$ in $\big(L^{\Phi_2}(\Omega)\big)^{N-1}$, we also have $ \big (1-\frac{\theta_n(x)}{n}\big)\nabla y_n \xrightharpoonup[\quad]{\ast}\nabla v$ in $\big(L^{\Phi_2}(\Omega) \big)^{N-1}$ as $n\rightarrow \infty$. Then, by
using the fact that $\phi_2(t)t^2$ is convex, we can apply [\citenum{ET}, Theorem $2.1$, Chapter 8] to get
\begin{align*}
\liminf_{n\rightarrow\infty}\int_{\Omega}\phi_2(|\big(1-\frac{\theta_n(x)}{n}\big)\nabla y_n|)\big(1-\frac{\theta_n(x)}{n}\big)^2|\nabla y_n|^2\geq \int_{\Omega}\phi_2(|\nabla v|)|\nabla v|^2 dx
\end{align*}
and so,
\begin{align*}
\int_{\Omega}\phi_2(|\nabla v|)|\nabla v|^2 dx\leq M.
\end{align*}
Recalling that
\begin{align*}
\phi_2(t)t^2=\Phi_2(t)+\tilde{\Phi }_2(\phi_2(t)t),\;\;\forall t\in\mathbb{R}
\end{align*}
we have
which leads to
\begin{align*}
\int_{\Omega}\phi_2(|\nabla v|)|\nabla v|^2dx=\int_{\Omega}\Phi_2(|\nabla v|)dx+\int_{\Omega}\tilde{\Phi }_2(\phi_2(|\nabla v|)|\nabla v|^2)dx.
\end{align*}
Since $\int_{\Omega}\phi_2(|\nabla v|)|\nabla v|^2dx$ is finite, we see that $\int_{\Omega}\Phi_2(|\nabla v|)dx$ and $\int_{\Omega}\tilde{\Phi }_2(\phi_2(|\nabla u|)|\nabla v|^2)dx$ are also finite, showing that $v\in D(J_{\Phi_2})$ and $v\in dom (\phi_2(t)t)$.

% Ademais, segue de \eqref{2.25} and  \eqref{2.28} that
%
%
%
%\newpage
%From this inequality, we can conclude that
%\begin{align}\label{2.31}
%\int_{\Omega}\Psi(|\nabla y_n|)dx\leq- \int_{\Omega}R_v(x,w_n,y_n)y_ndx,\;\;\forall n\geq k.
%\end{align}
%then from \eqref{2.26} and \eqref{2.31},
%\begin{align}
%\int_{\Omega}\Psi(|\nabla v|)dx\leq\liminf_{n\rightarrow\infty }\int_{\Omega}\Psi(|\nabla y_n|)dx\leq-\int_{\Omega}R_v(x,u,v)vdx,
%\end{align}
%showing that $v\in D(J_\Psi)$. Ademais, usando argumentos semelhantes ao que foi apresentado acima, mostra-se que $v\in dom (\psi(t)t)$. 
%%\textcolor{red}{Ainda pela desigualdade \eqref{2.32} junto com o limite \eqref{2.26}, podemos concluir que
%%\begin{align*}
%%\int_{\Omega}\Psi(|\nabla \varphi_2|)dx-\int_{\Omega}\Psi(|\nabla v|)dx\geq -\int_{\Omega}R_v(x,u,v)(\varphi_2-v)dx,\;\;\forall \varphi_2\in V_k.
%%\end{align*}}

Now,  for $\varphi \in W^{1}_0 E^{\Phi_2}(\Omega)$, there exists $\chi_{m} \in V_m$
such that
\begin{align}\label{222}
\lim_{m\rightarrow\infty} \chi_{m}=\varphi\;\;\text{ in } W^{1}_0 E^{\Phi_2}(\Omega).
\end{align}
From \eqref{2.30},
\begin{align*}
-\int_{\Omega}\phi_2(|\nabla y_n|)\nabla y_n \nabla  (\chi_{m}-y_n)dx=\int_{\Omega}R_v(x,w_n,y_n) (\chi_{m}-y_n)dx,\;\;\forall n\geq m.
\end{align*}
The convexity of $\Phi_2$ implies that
\begin{align}\label{61}
\int_{\Omega}\Phi_2(|\nabla \chi_{m}|)dx-\int_{\Omega}\Phi_2(|\nabla y_n|)dx\geq- \int_{\Omega}R_v(x, w_n,y_n)(\chi_{m}-y_n)dx,\;\;\forall n\geq m.
\end{align}
Thus, by the limit \eqref{2.26} we have
\begin{align}
\int_{\Omega}\Phi_2(|\nabla \chi_{m}|)dx-\int_{\Omega}\Phi_2(|\nabla v|)dx\geq- \int_{\Omega}R_v(x, u,v)(\chi_{m}-v)dx.
\end{align}
Now we use \eqref{222} in the above inequality to get
\begin{align}\label{12}
\int_{\Omega}\Phi_2(|\nabla \varphi|)dx-\int_{\Omega}\Phi_2(|\nabla v|)dx\geq- \int_{\Omega}R_v(x,u,v )(\varphi-v)dx.
\end{align}
Repeating the arguments used in Lemma \ref{2.20}, the inequalities \eqref{2} and \eqref{12} imply
\begin{align*}
\int_{\Omega}\phi_1(|\nabla u|)\nabla u\nabla\varphi_1dx= \int_{\Omega}R_u(x,u,v)\varphi_1dx,\;\;\forall \varphi_2\in  W^{1,\Phi_1}_0 (\Omega),
\end{align*}
\begin{align*}
\int_{\Omega}\phi_2(|\nabla v|)\nabla v\nabla \varphi_2dx=- \int_{\Omega}R_v(x,u,v)\varphi_2dx,\;\;\forall \varphi_2\in  W^{1}_0 E^{\Phi_2}(\Omega).
\end{align*}
Finally, the fact that $\phi_2(|\nabla v|)|\nabla v|\in L^{\tilde{\Phi }_2}(\Omega)$ together with the density weak$^*$ of $C ^{\infty}_0( \Omega)$
in $W^{1,\Phi_2}_0(\Omega)$ give
\begin{align*}
\int_{\Omega}\phi_1(|\nabla u|)\nabla u\nabla \varphi_1dx-\int_{\Omega}\phi_2(|\nabla v|)\nabla v\nabla \varphi_2dx= \int_{\Omega}R_u(x,u,v)\varphi_1dx+\int_{\Omega}R_v(x,u,v)\varphi_2dx,
\end{align*}
for every $(\varphi_1,\varphi_2)\in W^{1,\Phi_1}_0 (\Omega)\times W^{1,\Phi_2}_0 (\Omega) $. To conclude, the hypothesis $(R_1 ')$ guarantees that $(u,v)$ is a nontrivial solution for $(S_2)$, and the proof is complete.

\end{proof}

%%%%%%%%%%%%%%%%%%%%%%%%--Apêndice-1--%%%%%%%%%%%%%%%%%%%%%%% 
% 
%
\section*{Appendix}

\subsection*{Basics On Orlicz-Sobolev Spaces}
In this section we recall some properties of Orlicz and Orlicz-Sobolev spaces, which can be found in [\citenum{Adms},\citenum{FN},\citenum{Rao}]. First of all, we recall that a continuous function $\Phi:\mathbb{R}\rightarrow [0,+\infty)$ is a $N$-function if:
\begin{itemize}
	\item[(i)] $\Phi$ is convex;
	\item[(ii)] $\Phi(t)=0\Leftrightarrow t=0$;
	\item[(iii)] $\Phi$ is even;
	\item[(iv)] $\displaystyle\lim_{t\rightarrow 0}\dfrac{\Phi(t)}{t}=0\text{ and }\lim_{t\rightarrow +\infty}\dfrac{\Phi(t)}{t}=+\infty$.
\end{itemize}\vspace*{0.2cm}

We say that a $N$-function $\Phi$ verifies the $\Delta_2$-condition, and we denote by $\Phi\in(\Delta_2)$, if there are constants $K>0,\; t_{0}>0$ such that $$\Phi(2t)\leq K\Phi(t),~~\forall t \geq t_0.$$ In the case of $|\Omega|=+\infty$, we will consider that $\Phi\in(\Delta_2)$ if $t_0=0$. For instance, it can be shown that $\Phi(t)=|t|^p /p$ for $p > 1$ satisfies the $\Delta_{2}$-condition, while $\Phi(t)=(e^{t^2}-1)/2$ does not verify it. 

If $\Omega$ is an open set of $\mathbb{R}^N$, where $N$ can be a natural number such that $N\geq 1$, and $\Phi$ a $N$-function then define the Orlicz space associated with $\Phi$ as $$L^{\Phi}(\Omega)=\left\{u\in L^1_{\text{loc}}(\Omega):~\int_\Omega \Phi\left(\frac{|u|}{\lambda}\right) dx<+\infty~\text{for some}~\lambda>0\right\}.$$ The space $L^{\Phi}(\Omega)$ is a Banach space endowed with the Luxemburg norm given by $$\|u\|_{L^{\Phi}(\Omega)}=\inf\left\{\lambda>0:\int_\Omega \Phi\left(\frac{|u|}{\lambda}\right) dx\leq 1\right\}.$$ In the case that $\Phi$ verifies $\Delta_2$-condition we have $$L^{\Phi}(\Omega)=\left\{u\in L^1_{\text{loc}}(\Omega):~\int_\Omega \Phi(|u|) dx<+\infty\right\}.$$
The complementary function $\tilde{\Phi}$ associated with $\Phi$ is given by the Legendre transformation, that is, $$\tilde{\Phi}(s)=\max_{t\geq 0}\{st-\Phi(t)\},~~\forall\; t\geq 0.$$
The functions $\Phi$ and $\tilde{\Phi}$ are complementary to each other and satisfy the inequality below $$ \tilde{\Phi}(\Phi'(t))\leq \Phi(2t),\;\;\forall\; t>0.$$ Moreover, we also have a Young type inequality given by $$st\leq \Phi(t)+\tilde{\Phi}(s),~~~\forall s,t\geq 0.$$ Using the above inequality, it is possible to establish the following Holder type inequality: $$\left|\int_{\Omega}uvdx\right|\leq 2\|u\|_{L^{\Phi}(\Omega)}\|v\|_{{L^{\tilde{\Phi}}(\Omega)}},~~\text{for all}~~u\in L^{\Phi}(\Omega)~~ \text{and}~~ v\in L^{\tilde{\Phi}}(\Omega).$$

The corresponding Orlicz-Sobolev space is defined by $$W^{1, \Phi}(\Omega)=\left\{ u \in L^ {\Phi}(\Omega): \dfrac{\partial u}{\partial x_{i}} \in L^{\Phi}(\Omega), i=1,...,N \right\} ,$$ with the norm
\begin{align*}
\lVert u\Arrowvert_{1, \Phi} = \Arrowvert \nabla u \Arrowvert_{\Phi} + \Arrowvert u \Arrowvert_{\Phi}.
\end{align*}
The space $W^{1,\Phi}_0 (\Omega)$ is defined as the weak$^*$ clousure of $C^{\infty}_0(\Omega)$ in $W^{1,\Phi} (\Omega)$. Moreover, by the Modular Poincaré's inequality
\begin{align}
	\int_{\Omega} \Phi(|u|/d)dx\leq\int_{\Omega}\Phi(|\nabla u|)dx ,\;\forall u\in W^{1,\Phi}_0 (\Omega),
\end{align}
where $d=2diam(\Omega)$, and it follows that
\begin{align*}
\lVert u\lVert_{\Phi}\leq 2d\lVert \nabla u\lVert_{\Phi},\;\forall u\in W^{1,\Phi}_0 (\Omega).
\end{align*}
The last inequality yields that the functional $\lVert \cdot\lVert:=\lVert \nabla\cdot\lVert_{{\Phi}}$ defines an equivalent norm in $W^{1,\Phi}_0 (\Omega)$. The spaces $L^{\Phi}(\Omega)$, $W^{1,\Phi} (\Omega)$ and $W^{1,\Phi}_0 (\Omega)$ are separable and reflexive, when $\Phi$ and $\tilde{\Phi}$ satisfy $\Delta_{2}$-condition .

If $|\Omega|<\infty$, the space $E^{\Phi}(\Omega)$ denotes the closing of $L^{\infty}(\Omega)$ in $L^{\Phi}( \Omega)$ with respect to the norm $\lVert \cdot\lVert_{\Phi}.$ When $|\Omega|=\infty$, the space $E^{\Phi}(\Omega)$ denotes the closure of $C^{\infty}_{0}(\Omega)$ in $L^{\Phi}(\Omega)$ with respect to norm $\lVert \cdot\lVert_{\Phi}.$ In any of these cases, $L^{\Phi}(\Omega)$ is the dual space of $E^{\tilde{\Phi}}(\Omega)$, while $L^{\tilde{\Phi}}( \Omega)$ is the dual space of $E^{{\Phi}}(\Omega)$.  Moreover, $E^{{\Phi}}(\Omega)$ and $E^{\tilde{\Phi}}(\Omega)$ are separable and all continuous functional $M:E^{{\Phi} }(\Omega)\longrightarrow\mathbb{R}$ are of the form
\begin{align*}
M(v)=\int_{\Omega} v(x)g(x)dx,\;\;\;\;\text{for some function}\;\; g\in L^{\tilde{\Phi}}(\Omega).
\end{align*}
We recall that if $\Phi$ verifies the $\Delta_{2}$-condition, we then have $E^{\Phi}(\Omega)=L^{\Phi}(\Omega)$.

The next result is crucial in the approach explored in Sections 3 and 4, and its proof follows directly from the Banach-Alaoglu-Bourbaki theorem \cite{Brezis}.

\begin{lemma}\label{10.6}
	Assume that $\Phi$ is a $N$-function. If $(u_n)\subset W^{1,\Phi}_0(\Omega)$ is a bounded sequence, then are a subsequence of $(u_n)$, still denoted by itself,
	and  $u\in W^{1,\Phi}_0(\Omega)$ such that
	\begin{align}\label{10.23}
	u_n\xrightharpoonup[\quad]{\ast} u\;\;\;\text{ in }\;L^{\Phi}(\Omega)\;\;\;\;\text{ and }\;\;\;\;	\dfrac{\partial u_n}{\partial x_i}\xrightharpoonup[\quad]{\ast} \dfrac{\partial u}{\partial x_i}\;\;\;\text{ in }\;L^{\Phi}(\Omega)
	\end{align}
and 
	\begin{align*}
	\int_{\Omega}u_nvdx\rightarrow \int_{\Omega}uvdx,\quad
	\int_{\Omega}\dfrac{\partial u_n}{\partial x_i}wdx\rightarrow\int_{\Omega}\dfrac{\partial u}{\partial x_i}wdx,\;\;\forall v, w \in E^{\tilde{\Phi}}(\Omega).
	\end{align*}
\end{lemma}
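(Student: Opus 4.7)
The plan is to deduce the statement directly from the sequential Banach--Alaoglu--Bourbaki theorem, exploiting two structural facts recalled just before the lemma: the duality $L^{\Phi}(\Omega)=\bigl(E^{\tilde{\Phi}}(\Omega)\bigr)^{*}$, and the separability of $E^{\tilde{\Phi}}(\Omega)$. Together these imply that the closed balls of $L^{\Phi}(\Omega)$ are sequentially compact for the weak$^{*}$ topology (Theorem 3.16 in Brezis).

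First I would observe that if $(u_{n})$ is bounded in $W^{1,\Phi}_{0}(\Omega)$ with respect to $\lVert u\lVert_{1,\Phi}=\lVert u\lVert_{\Phi}+\lVert\nabla u\lVert_{\Phi}$, then each of the $N+1$ sequences $(u_{n})$ and $\bigl(\partial u_{n}/\partial x_{i}\bigr)$ ($i=1,\dots,N$) is bounded in $L^{\Phi}(\Omega)$. Applying the sequential weak$^{*}$ compactness recalled above to each of these sequences and performing the standard diagonal extraction, I obtain a single subsequence, still denoted $(u_{n})$, and elements $u,w_{1},\dots,w_{N}\in L^{\Phi}(\Omega)$ such that $u_{n}\xrightharpoonup[\quad]{\ast}u$ and $\partial u_{n}/\partial x_{i}\xrightharpoonup[\quad]{\ast}w_{i}$ in $L^{\Phi}(\Omega)$. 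By definition of weak$^{*}$ convergence, this gives the two integral limits stated in the lemma for every $v,w\in E^{\tilde{\Phi}}(\Omega)$.

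Next I would identify $w_{i}$ with the distributional derivative $\partial u/\partial x_{i}$. For any $\varphi\in C^{\infty}_{0}(\Omega)$, both $\varphi$ and $\partial\varphi/\partial x_{i}$ lie in $L^{\infty}(\Omega)$ with compact support, hence in $E^{\tilde{\Phi}}(\Omega)$. Using classical integration by parts for each $u_{n}\in W^{1,\Phi}_{0}(\Omega)$ together with weak$^{*}$ convergence yields
\begin{align*}
\int_{\Omega}w_{i}\,\varphi\,dx=\lim_{n\to\infty}\int_{\Omega}\frac{\partial u_{n}}{\partial x_{i}}\varphi\,dx=-\lim_{n\to\infty}\int_{\Omega}u_{n}\frac{\partial\varphi}{\partial x_{i}}\,dx=-\int_{\Omega}u\frac{\partial\varphi}{\partial x_{i}}\,dx,
\end{align*}
so $w_{i}=\partial u/\partial x_{i}$ in the distributional sense and therefore in $L^{\Phi}(\Omega)$. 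Consequently $u\in W^{1,\Phi}(\Omega)$ and the weak$^{*}$ convergences in \eqref{10.23} are established.

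Finally, to conclude that $u\in W^{1,\Phi}_{0}(\Omega)$ I would invoke the definition: $W^{1,\Phi}_{0}(\Omega)$ is the weak$^{*}$ closure of $C^{\infty}_{0}(\Omega)$ in $W^{1,\Phi}(\Omega)$. Since $E^{\tilde{\Phi}}(\Omega)$ is separable, the weak$^{*}$ topology on the bounded sets of $L^{\Phi}(\Omega)$ is metrizable, and the same holds componentwise for $W^{1,\Phi}(\Omega)$; sequentially weak$^{*}$ closed and weak$^{*}$ closed thus coincide on bounded sets. As $(u_{n})$ is bounded in $W^{1,\Phi}_{0}(\Omega)$ and converges to $u$ in the weak$^{*}$ topology of $W^{1,\Phi}(\Omega)$, it follows that $u\in W^{1,\Phi}_{0}(\Omega)$. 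The only mildly delicate step is the metrizability/closure argument at this last stage, and it is standard once separability of the predual is used; no substantive obstacle remains.
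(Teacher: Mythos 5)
Your proposal is correct and follows the same route the paper indicates (the paper itself only remarks that the lemma ``follows directly from the Banach--Alaoglu--Bourbaki theorem,'' which is precisely the sequential-weak$^{*}$-compactness-plus-diagonal-extraction argument you spell out). The only small remark is that the metrizability digression at the end is unnecessary: since $W^{1,\Phi}_{0}(\Omega)$ is by definition a weak$^{*}$ closed subset of $W^{1,\Phi}(\Omega)$, any weak$^{*}$ sequential limit of a sequence in it already belongs to it, without invoking metrizability on bounded sets.
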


We denote the limit \eqref{10.23} by $u_n\xrightharpoonup[\quad]{\ast} u$ in $ W^{1,\Phi}_0(\Omega)$.
As an immediate consequence of the last lemma, we have the following corollary.

\section{Acknowledgments} 
The authors are grateful to the Paraíba State Research Foundation (FAPESQ), Brazil, and the Conselho Nacional de Desenvolvimento Científico e Tecnológico (CNPq), Brazil, whose funds partially supported this paper.

\end{document}